\numberwithin{equation}{section}
\DeclareMathOperator{\arccot}{arccot}
\DeclareMathOperator{\arccosh}{arccosh}
\newcommand{\pp}[2]{\frac{\partial#1}{\partial#2}}
    \newcommand{\Addresses}{{
  \bigskip
  \footnotesize
  \noindent Guangming Hu, \href{18810692738@163.com}{18810692738@163.com}
\newline\textit{College of Science, Nanjing University of Posts and Telecommunications,
  Nanjing, 210003, P.R. China.}\par\nopagebreak
  \medskip
 \noindent Ziping Lei, \href{zplei@ruc.edu.cn}{zplei@ruc.edu.cn}
  \newline\textit{ School of Mathematics, Renmin University of China, Beijing, 100872, P.R. China.} \par\nopagebreak
    \medskip
  \noindent Yu Sun, \href{yusun15185105160@163.com}{yusun15185105160@163.com}
  \newline\textit{School of mathematics and physics, Nanjing institute of technology, Nanjing, 211100, P.R. China.} \par\nopagebreak
    \medskip
  \noindent Puchun Zhou, \href{pczhou22@m.fudan.edu.cn}{pczhou22@m.fudan.edu.cn}
  \newline\textit {School of Mathematical Sciences, Fudan University, Shanghai, 200433, P.R. China} \par\nopagebreak
}}
\title{The Convergence of Prescribed Combinatorial Ricci Flows for  Total Geodesic Curvatures in Spherical Background Geometry}
\author{ Guangming Hu, Ziping Lei,Yu Sun and Puchun Zhou}
\date{}
\newtheorem{theorem}{Theorem}[section]
\newtheorem{lemma}[theorem]{Lemma}
\newtheorem{proposition}[theorem]{Proposition}
\newtheorem{corollary}[theorem]{Corollary}
\theoremstyle{definition}
\newtheorem{definition}[theorem]{Definition}
\newtheorem{remark}[theorem]{Remark}
\begin{document}
\maketitle

\begin{abstract}
In this paper, we study the existence and rigidity of (degenerated) circle pattern metric with prescribed total geodesic curvatures in spherical background geometry. To find the (degenerated) circle pattern metric with prescribed total geodesic curvatures, we define some prescribed combinatorial Ricci flows and study the convergence of flows for (degenerated) circle pattern metrics. We solve the prescribed total geodesic curvature problem and provide two methods to find the degenerated circle pattern metric with prescribed total geodesic curvatures. As far as we know, this is the first degenerated result for total geodesic curvatures in spherical background geometry.      

 \medskip
\noindent\textbf{Mathematics Subject Classification (2020)}: 52C25, 52C26, 53A70.
\end{abstract}

\section{Introduction}\label{sec1}
\subsection{Background}

In differential geometry, Hamilton \cite{Hamilton} introduced the Ricci flow defined by some equations $\frac{dg_{ij}}{dt}=-2Kg_{ij}$, where $g_{ij}$ is the Riemannian metric and $K$ is the Gaussian curvature. This is an important tool to solve the Poincaré conjecture \cite{Perelman1, Perelman2, Perelman3} and Thurston’s
geometrization conjecture \cite{cao}. Moreover, the Ricci flow is used to prove the uniformization theorem \cite{Tian}. 

In discrete geometry, Chow and Luo \cite{chow} constructed the combinatorial Ricci flow which is an analogy of the Ricci flow in smooth case. Given a closed surface $S$ with a triangulation $\mathscr{T}=(V,E,F)$ and a weight $\Phi: E\to [0,\frac{\pi}{2}]$, then we can define a map $\mathbf{r}=(r_1,\cdots,r_{|V|}): V\to (0,+\infty)$, the $\mathbf{r}$ is called a metric on the weighted triangulation $(\mathscr{T},\Phi)$. Using the metric $\mathbf{r}$, for any edge $e_{ij}\in E$ joining $v_i$ and $v_j$, we can define the length $l_{ij}$ of $e_{ij}$ in spherical (Euclidean, hyperbolic) background geometry, i.e. 
$$
l_{ij}=\arccos(\cos r_i\cos r_j-\sin r_i \sin r_j\cos(\Phi(e_{ij})))~(\mathbb{S}^2),
$$ 
$$
l_{ij}=\sqrt{r_i^2+r_j^2+2r_i r_j\cos(\Phi(e_{ij}))}~(\mathbb{R}^2)
$$
and
$$
l_{ij}=\arccosh(\cosh r_i\cosh r_j+\sinh r_i\sinh r_j\cos(\Phi(e_{ij})))~(\mathbb{H}^2).
$$
Then there exists a local spherical (Euclidean, hyperbolic) metric on each triangle $\triangle_{ijk}\in F$ whose vertices are $v_i$, $v_j$ and $v_k$. Gluing these spherical (Euclidean, hyperbolic) triangles along their sides together, then we obtain a spherical (Euclidean, hyperbolic) conical metric with singularities at the vertices on the closed surface $S$. Denote $\vartheta_{i}^{jk}$ as the angle at the vertex $v_i$ in the spherical (Euclidean, hyperbolic) triangle $\triangle_{ijk}\in F$. The combinatorial Gaussian curvature $K_i$ at the vertex $v_i$ is defined as 
$$
K_i=2\pi-\sum_{\triangle_{ijk}\in F}\vartheta_{i}^{jk},
$$
where the sum is taken over all triangles with the vertex $v_i$ in $F$. Then Chow and Luo \cite{chow} defined the combinatorial Ricci flow in spherical (Euclidean, hyperbolic) background geometry, i.e.  
\begin{equation}
\frac{dr_i}{dt}=-K_i\sin r_i~(\mathbb{S}^2),
\end{equation}
\begin{equation}\label{ef}
\frac{dr_i}{dt}=-K_ir_i~(\mathbb{R}^2)
\end{equation}
and 
\begin{equation}\label{hf}
\frac{dr_i}{dt}=-K_i\sinh r_i~(\mathbb{H}^2).
\end{equation}
Chow and Luo \cite{chow} studied the combinatorial Ricci flow and obtained the long time existence of the solution to the combinatorial Ricci flow. Moreover, the solution to the combinatorial Ricci flow will converge exponentially fast to the metric with constant curvature in some cases. They obtained the following two theorems:
\begin{theorem}
Suppose $(\mathscr{T}, \Phi)$ is a weighted triangulation of a closed connected surface $S$. Given any initial metric based on the weighted triangulation, the solution to the combinatorial Ricci flow (\ref{ef}) in the Euclidean background geometry with the given initial value exists for all time and converges if and only if for any proper subset $I \subset V$,
\begin{equation}\label{con}
2 \pi |I| \chi(S) /|V|>-\sum_{(e, v) \in Lk(I)}(\pi-\Phi(e))+2\pi\chi(F_I),
\end{equation}
where $F_I$ is the subcomplex of $\mathscr{T}$ consisting of all simplex whose vertices are contained in $I$, $ (e, v)\in \operatorname{Lk}(I)$ is a triangle in $F$ with a vertex $v$ in $(e, v)$ belongs to $I$ and neither of the endpoints of the edge $e$ in $(e, v)$ opposite $v$ belongs to $I$ and $\operatorname{Lk}(I)$ is the set of all such triangles. 

Furthermore, if the solution converges, then it converges exponentially fast to the metric with constant curvature.
\end{theorem}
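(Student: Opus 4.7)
The plan is to recast the flow as the negative gradient flow of a strictly convex functional on a hyperplane and reduce convergence to a coercivity (properness) statement. Substituting $u_i = \log r_i$, equation (\ref{ef}) becomes $\dot u_i = -K_i(u)$. A direct computation using the Euclidean law of cosines on each triangle (essentially the variational identity of Colin de Verdi\`ere) shows that the Jacobian $\bigl(\partial K_i/\partial u_j\bigr)$ is symmetric and positive semi-definite, with one-dimensional kernel spanned by $(1,\dots,1)$ corresponding to scale invariance. Hence the 1-form $\omega = \sum_i K_i\, du_i$ is closed and one obtains a convex Ricci potential $F(u)$. On the hyperplane $\sum_i u_i = 0$, the modified potential $\tilde F(u) = F(u) - \bigl(2\pi\chi(S)/|V|\bigr)\sum_i u_i$ is strictly convex and the flow becomes its negative gradient flow.

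Long-time existence follows from an $L^\infty$ bound on $K_i$ along the flow together with the explicit form of $\dot u_i$, which together prevent finite-time blow-up in either direction. For the ``only if'' direction, if the flow converges then the limit is a metric with $K_i \equiv 2\pi\chi(S)/|V|$; summing this identity over a proper subset $I \subset V$ and applying a Gauss--Bonnet style accounting on the subcomplex $F_I$, with the triangles of $\operatorname{Lk}(I)$ contributing single link angles strictly bounded above by $\pi - \Phi(e)$ (a sharp inequality for non-degenerate Euclidean triangles with edge weight $\Phi(e)$), yields exactly (\ref{con}).

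For the ``if'' direction, the crux is to show that (\ref{con}) makes $\tilde F$ proper on the normalization hyperplane, so a unique minimizer $u^{*}$ exists and the gradient flow converges to it, necessarily exponentially because $\Hessian \tilde F$ is positive definite on the hyperplane at $u^{*}$. Properness is established by a Marden--Rodin-type characterization of the image $\mathcal{A} = \{K(u) : \sum u_i = 0\}$: an analysis of the angle asymptotics as $u_i \to -\infty$ for $i \in I$ (i.e.\ as the radii of a proper subset collapse) shows that $\mathcal{A}$ is cut out exactly by the inequalities (\ref{con}) over all proper $I$, with $(2\pi\chi(S)/|V|,\dots,2\pi\chi(S)/|V|)$ lying in its interior if and only if (\ref{con}) holds. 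The principal obstacle is precisely this cone characterization: one must track, uniformly over all proper $I \subset V$, the limiting contribution of each triangle in $\operatorname{Lk}(I)$ as its $I$-vertex radius degenerates, and verify that the weighted penalty $\pi - \Phi(e)$ together with the Euler-characteristic correction $2\pi\chi(F_I)$ is exactly the right balance coming out of Gauss--Bonnet on the degenerating piece.
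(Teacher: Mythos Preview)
This theorem is not proved in the paper: it is stated in the introduction as a background result due to Chow and Luo \cite{chow}, and the paper's own contributions concern total geodesic curvatures in spherical background geometry (Theorems~\ref{thm10}--\ref{s5}), not the Euclidean combinatorial Ricci flow. So there is no proof in the paper to compare your attempt against.

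That said, your sketch is a faithful outline of the Chow--Luo argument itself: the change of variables $u_i=\log r_i$, the closed $1$-form $\sum K_i\,du_i$ and the strictly convex potential on the normalization hyperplane (the Colin de Verdi\`ere functional), long-time existence from the uniform bound on $K_i$, Thurston's image characterization via degeneration analysis of link triangles, and exponential convergence from positive-definiteness of the Hessian at the minimizer. The one place where your write-up is thin is the properness/cone-characterization step: you correctly identify it as the crux, but in an actual proof this requires a careful case analysis of how angles degenerate when a subset of radii goes to zero (Thurston's lemma, or the explicit estimates in Chow--Luo), not just an assertion that the limiting contribution is $\pi-\Phi(e)$. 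If you intend this as a genuine proof rather than a plan, that step needs to be written out.
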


\begin{theorem}
Suppose $(\mathscr{T}, \Phi)$ is a weighted triangulation of a closed connected surface $S$ of negative Euler characteristic. Given any initial metric, the solution to (\ref{hf}) in the hyperbolic background geometry with the given initial value exists for all time and converges if and only if the following two conditions hold.
\begin{enumerate}
\item\label{t1} For any three edges $e_1, e_2, e_3$ forming a null homotopic loop in $S$, if $\sum_{i=1}^3 \Phi\left(e_i\right) \geq \pi$, then these three edges form the boundary of a triangle in $F$.
\item\label{t2} For any four edges $e_1, e_2, e_3, e_4$ forming a null homotopic loop in $S$, if $\sum_{i=1}^4 \Phi\left(e_i\right) \geq$ $2 \pi$, then these $e_i$ 's form the boundary of the union of two adjacent triangles in $F$.
\end{enumerate}

Furthermore, if it converges, then it converges exponentially fast to a hyperbolic metric on $S$ so that all vertex angles are $2 \pi$.
\end{theorem}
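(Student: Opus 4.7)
The plan is to follow the variational approach that Chow and Luo established for the Euclidean case and adapt it to the hyperbolic setting. First I would perform the change of variables $u_i=\ln\tanh(r_i/2)$, which carries the admissible region $(0,+\infty)^{|V|}$ diffeomorphically onto $(-\infty,0)^{|V|}$ and, crucially, transforms the hyperbolic flow (\ref{hf}) into $\dot u_i=-K_i$. The main computational input is Colin de Verdière's style lemma: on the space of admissible $\mathbf{u}$, the Jacobian $\partial K_i/\partial u_j$ is symmetric and positive semi-definite, with a one-dimensional kernel in the Euclidean case but strictly positive definite in the hyperbolic case. Hence $-K_i\,du_i$ is a closed $1$-form and integrates to a strictly convex $C^2$ functional $F(\mathbf{u})$ on the admissible region whose gradient equals $K=(K_1,\ldots,K_{|V|})$. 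The flow (\ref{hf}) is thus the negative gradient flow of the strictly convex function $F$.

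Second, I would establish long-time existence. Along the flow $F$ is non-increasing, so $\mathbf{u}(t)$ stays in a sublevel set. One then shows that on such a sublevel set the admissible region is closed and the $u_i$ stay in a compact subset of $(-\infty,0)^{|V|}$; the only way the flow could fail to extend is if some $r_i\to 0$ (equivalently $u_i\to-\infty$) or a degenerate triangle appears, and an energy estimate using the explicit form of $F$ rules this out provided the sublevel set is compact. This is where conditions (\ref{t1}) and (\ref{t2}) first enter: together with the hypothesis $\chi(S)<0$, they guarantee that the admissible polyhedron carved out in $\mathbf{u}$-space by all triangle inequalities $\vartheta_i^{jk}\in(0,\pi)$ is non-empty and that $F(\mathbf{u})\to+\infty$ as $\mathbf{u}$ tends to the boundary of this polyhedron or to infinity.

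Third, I would derive convergence. By Andreev-Thurston type arguments, conditions (\ref{t1}) and (\ref{t2}) are exactly what is needed to certify that the image of the curvature map $\mathbf{u}\mapsto K(\mathbf{u})$ contains $0$, i.e.\ that a hyperbolic circle-pattern metric with all vertex angles equal to $2\pi$ actually exists. Combined with strict convexity of $F$ and its properness on the admissible region (from step two), the critical point $\mathbf{u}^\ast$ with $K(\mathbf{u}^\ast)=0$ is the unique global minimizer. Standard Lyapunov theory for gradient flows of strictly convex, proper functions then yields $\mathbf{u}(t)\to \mathbf{u}^\ast$, and because $\mathrm{Hess}\,F(\mathbf{u}^\ast)>0$ (the Jacobian is positive definite), the convergence is exponentially fast.

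Finally, for the necessity direction I would argue contrapositively: if condition (\ref{t1}) or (\ref{t2}) fails, there is an obstruction (a short null-homotopic polygon whose weight sum exceeds its combinatorial threshold) that forces some subset $I\subset V$ to have $K_I>0$ throughout the admissible region, so the flow either leaves the admissible region in finite time or drives $r_i\to 0$ on $I$; in particular it cannot converge to a metric with $K\equiv 0$. The main obstacle I expect is the careful analysis in step two: proving that the $r_i$'s stay bounded below under the flow when only the combinatorial hypotheses (\ref{t1}), (\ref{t2}) are available, since the hyperbolic triangle inequalities are more delicate than their Euclidean analogues and degeneration can occur not only at $r_i=0$ but also at the boundary of the cell where some angle $\vartheta_i^{jk}$ hits $0$ or $\pi$. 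Handling these ``interior'' degenerations via the blow-up analysis of $F$ near the admissible polyhedron's faces will be the crux.
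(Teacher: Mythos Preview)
The paper does not prove this theorem. It is stated in the Background subsection as a result of Chow and Luo \cite{chow}, quoted to motivate the present work; no proof is given or even sketched here. So there is no ``paper's own proof'' to compare your proposal against.

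That said, a couple of remarks on your sketch relative to the original Chow--Luo argument. Your step two conflates long-time existence with convergence. In the hyperbolic setting long-time existence holds \emph{unconditionally}: after the substitution $u_i=\ln\tanh(r_i/2)$ the flow becomes $\dot u_i=-K_i$, and each $K_i$ is uniformly bounded in absolute value by a constant depending only on the combinatorics (namely $|K_i|\le 2\pi+\pi\cdot\deg(v_i)$). Hence $|u_i(t)|$ grows at most linearly and the solution exists for all $t\ge 0$ regardless of whether conditions \ref{t1} and \ref{t2} hold. Those conditions enter only at the convergence stage: they are precisely Thurston's combinatorial hypotheses guaranteeing that a zero-curvature circle packing metric exists, which then serves as the unique minimizer of the strictly convex potential $F$. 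Your claim that $F\to+\infty$ at the boundary of the admissible region is also not quite how the argument runs; rather, one shows that if the solution fails to converge then some $r_i(t_n)\to 0$ along a subsequence, and a maximum-principle type estimate on the curvatures (using the monotonicity lemmas for hyperbolic triangles) forces a contradiction with the existence of the target metric. The necessity direction you outline is essentially correct in spirit.
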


Besides, Chow and Luo \cite{chow} posed some questions and one of them is whether the limit $\lim_{t\to +\infty}r_i(t)$ of the solution $r_i(t)$ to combinatorial Ricci flow always exists in the extended set $[0,+\infty]$ when the combinatorial conditions (\ref{con}), \ref{t1} and \ref{t2} are not valid.  

Takatsu \cite{takasu} studied the question and solved it, based on an infinitesimal description of degenerated circle pattern metrics. She obtained the following theorem:
\begin{theorem}
Let $(\mathscr{T}, \Phi)$ be a weighted triangulation of a surface $S$ of nonpositive Euler characteristic such that $\phi(I) \leq 0$ holds for any subset $I \subset V$ and
$Z_T:=\{z \in V \mid$ there exists a proper subset $Z$ of $V$ such that $z \in Z$ and $\phi(Z)=0\}$ is nonempty, where 
$$
\phi(I):=-\sum_{(e, v) \in Lk(I)}(\pi-\Phi(e))+2 \pi \chi\left(F_I\right).
$$ 
Then for any metric $\mathbf{r}$ on $(\mathscr{T}, \Phi)$, the solution $\{\mathbf{r}(t)\}_{t \geq 0}$ to combinatorial Ricci flow with initial data $\mathbf{r}$ does not converge on $\mathbb{R}_{>0}^{|V|}$ at infinity. However,
$$
\lim _{t \rightarrow +\infty} K_i(\mathbf{r}(t))=0
$$
holds for any vertex $v_i$.

On the one hand, for $\chi(S)=0$, the solution $\{\mathbf{r}(t)\}_{t \geq 0}$ to combinatorial Ricci flow does not converge on $\mathbb{R}_{\geq 0}^{|V|}$ at infinity. However, if we fix an arbitrary $v \in V \backslash Z_T$, then the limit
$$
\rho_u:=\lim _{t \rightarrow +\infty} \frac{r_u(t)}{r_v(t)}
$$
exists for any $u \in V$, where $Z_T=\left\{z \in V \mid \rho_z=0\right\}$ holds and $\left(\rho_u\right)_{u \in V \backslash Z_T}$ is a unique circle pattern metric with normalization $\rho_v=1$ on a certain weighted triangulation with vertices $V \backslash Z_T$.

On the other hand, for $\chi(S)<0$, the solution $\{\mathbf{r}(t)\}_{t \geq 0}$ to combinatorial Ricci flow converges on $\mathbb{R}_{\geq 0}^{|V|}$ at infinity, where we have $Z_T=\left\{z \in V \mid \lim _{t \rightarrow \infty} r_z(t)=0\right\}$ holds and the limit of $\left(r_v(t)\right)_{v \in V \backslash Z_T}$ at infinity is a unique circle pattern metric on a certain weighted triangulation with vertices $V \backslash Z_T$.
\end{theorem}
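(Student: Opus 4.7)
The plan is to exploit the variational structure of the combinatorial Ricci flow. After the change of variables $u_i=\log r_i$ in the Euclidean case and $u_i=\log\tanh(r_i/2)$ in the hyperbolic case, the flow becomes the negative gradient flow $\dot u_i=-K_i$ of a strictly convex Ricci potential $F(u)$ whose partial derivatives are exactly the curvatures $K_i$. The hypothesis that $\phi(I)\leq 0$ for all $I\subset V$, with equality attained for some proper $I$, places the zero curvature vector on the boundary of the image of the curvature map $K$; this is the mechanism driving the degeneration.

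First I would establish long-time existence together with $K_i(\mathbf{r}(t))\to 0$. Long-time existence is inherited from the Chow--Luo framework, while monotonicity of $F$ along the flow yields $\int_0^{\infty}\sum_i K_i^2\,dt<\infty$. Differentiating $\sum_i K_i^2$ once more and using the positive semidefiniteness of $\Hessian F$ upgrades this integral bound to the pointwise limit $\lim_{t\to+\infty}K_i(\mathbf{r}(t))=0$ for each $i$. For the non-convergence in $\mathbb{R}_{>0}^{|V|}$, I would argue by contradiction: if $\mathbf{r}(t)\to\mathbf{r}^{\ast}$ with strictly positive coordinates, continuity forces $K(\mathbf{r}^{\ast})=0$, but the Chow--Luo characterization shows that $K=0$ cannot be attained in the interior of $\mathbb{R}_{>0}^{|V|}$ precisely when $\phi(I)=0$ for some proper $I$.

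Next I would identify the limit. For $\chi(S)=0$ the Euclidean potential is invariant under the diagonal shift $u\mapsto u+c\mathbf{1}$, so only the ratios $\rho_u(t)=r_u(t)/r_v(t)$ can converge; fixing $v\in V\setminus Z_T$ and restricting $F$ to the zero-sum hyperplane, strict convexity plus coercivity on this hyperplane produce a unique (possibly boundary) minimizer which is the common limit of $\rho_u(t)$, with $\rho_z=0$ exactly for $z\in Z_T$. For $\chi(S)<0$ there is no shift invariance and the hyperbolic potential is coercive on $[0,+\infty]^{|V|}$, so $\mathbf{r}(t)$ itself converges in $\mathbb{R}_{\geq 0}^{|V|}$ to the unique minimizer of $F$ on the extended cell, and the vertices at which this minimizer vanishes once again coincide with $Z_T$. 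In both cases one restricts to the subcomplex on $V\setminus Z_T$, verifies that the strict Chow--Luo inequalities hold there, and invokes the original Chow--Luo theorems to deduce existence and uniqueness of the limiting circle pattern metric.

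The main obstacle will be the precise identification of the degenerate set with $Z_T$. One must show that the family of proper $I\subset V$ with $\phi(I)=0$ is closed under union so that $Z_T$ is itself the (unique) maximal such set, and then that $\phi_{\mathrm{sub}}(J)<0$ strictly for every proper $J\subset V\setminus Z_T$ in the induced subcomplex $F_{V\setminus Z_T}$. This combinatorial-topological bookkeeping, combined with an infinitesimal analysis controlling how the triangles incident to the collapsing vertices behave so that the residual configuration is a \emph{bona fide} circle pattern metric, is the technical heart of the argument and is where Takatsu's infinitesimal description of degenerated circle pattern metrics plays its essential role.
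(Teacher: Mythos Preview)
This theorem is not proved in the present paper. It appears in the introduction solely as a quotation of Takatsu's result \cite{takasu}, serving as background and motivation for the authors' own work in spherical background geometry. The paper contains no proof or proof sketch of this statement; all of the paper's arguments concern the total geodesic curvature flows (\ref{s1}), (\ref{s3}), (\ref{s2}) on cellular decompositions with spherical bigons, not the Euclidean or hyperbolic combinatorial Ricci flows on triangulations that Takatsu treats.

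Consequently there is nothing in the paper to compare your proposal against. Your outline is broadly consistent with the variational strategy that Takatsu herself employs (gradient flow of a convex potential, boundary analysis of the curvature image, identification of the degenerate set via the combinatorics of $\phi(I)=0$), and the only tool from the present paper that overlaps is Lemma~\ref{lemma5}, which the authors in fact import from Takatsu. If you want feedback on the correctness of your plan relative to an actual proof, you should consult \cite{takasu} directly rather than this paper.
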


Recently, Nie \cite{nie} defined a new combinatorial scalar curvature, i.e. the total geodesic curvature and constructed a convex functional with total geodesic curvature. He obtained the rigidity of circle patterns in spherical background geometry. Followed his work, the last author of this paper and his collaborators \cite{GHZ} defined a combinatorial curvature flow which is an analogy of the prescribed combinatorial Ricci flow. They obtained an algorithm to find the desired ideal circle pattern.

Motivated by \cite{chow, takasu}, we obtain some results for total geodesic curvature in spherical background geometry. In this paper, we will define the prescribed combinatorial Ricci flow for total geodesic curvature in spherical background geometry and study the convergence of the solution to prescribed combinatorial Ricci flow. 

\subsection{Main results}

\subsubsection{Spherical conical metrics on surfaces}

Given a closed topological surface $S$ and a cellular decomposition $\Sigma=(V,E,F)$ of $S$. Denote $V, E$ and $F$ as set of $0$-cells, $1$-cells and $2$-cells, respectively. By $|E|$ we denote the number of 1-cells and by $|F|$ we denote the number of 2-cells. We can define a function $\mathbf{r}$ on $V$ and a function $\Phi$ on $E$, i.e. $\mathbf{r}=(r_1,\cdots,r_{|F|}): V\to (0,\frac{\pi}{2}]$ and $\Phi: E\to (0,\frac{\pi}{2})$.  Besides, the function $\Phi$ is called a weight.   

   For any 2-cell $f\in F$, we choose a auxiliary point $p_f\in f$ and add an edge between each vertex on $\partial f$ and $p$. Then we obtain a triangulation of $S$ as shown in Figure \ref{fig2} .  
   
   \begin{figure}[htbp]
\centering
\includegraphics[scale=1.0]{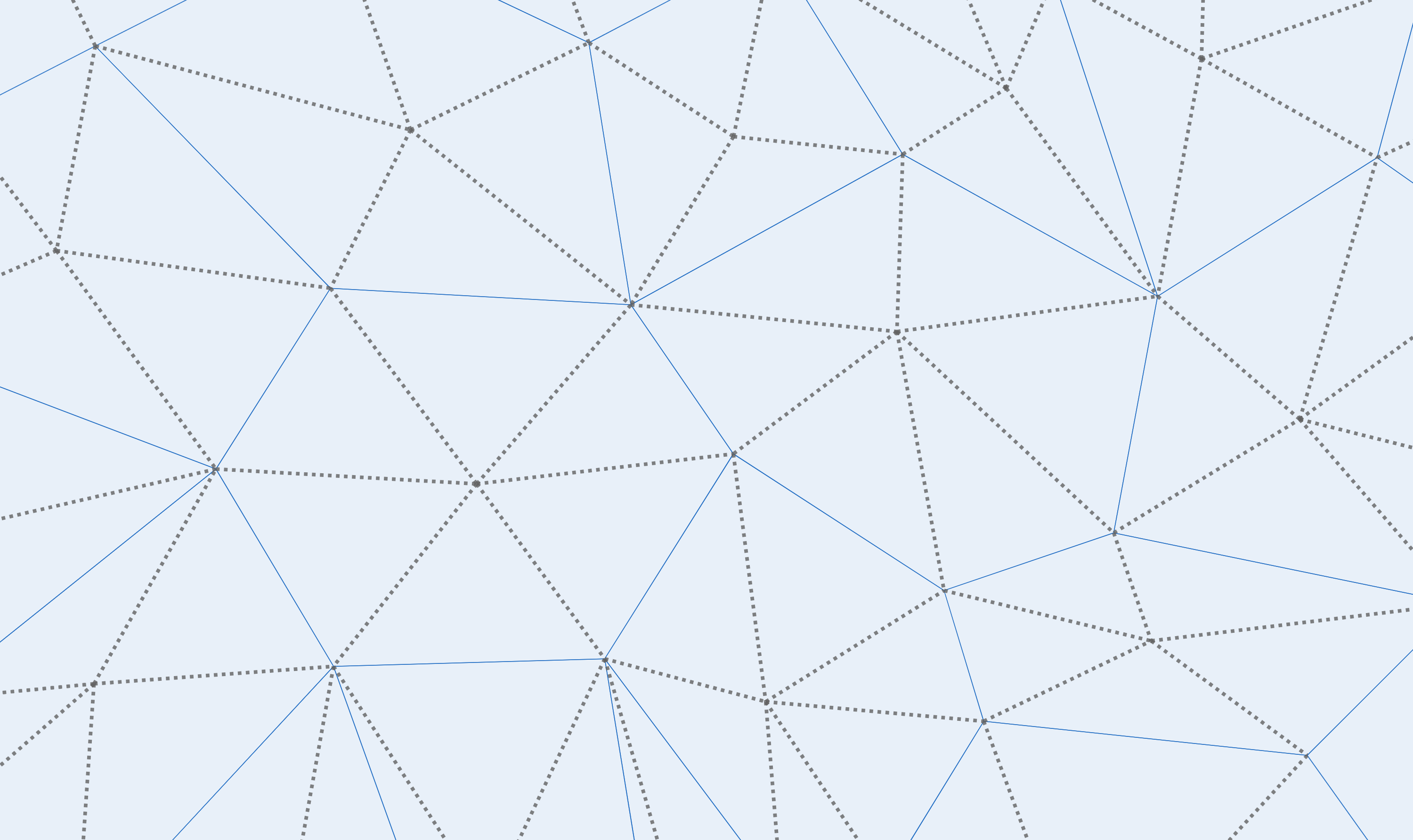}
\captionof{figure}{\small The cellular decomposition and triangulation of $S$ }
\label{fig2}
\end{figure} 
   
   For any 1-cell $e\in E$, there exists a quadrilateral $Q(e)$, where the vertices of $Q(e)$ are the end points of $e$ and auxiliary points of the 2-cells on the two sides of $e$. Denote the two auxiliary points as $p, {p}'$ and $f(p), f({p}')\in F$ denotes the 2-cells containing the points $p, {p}'$, respectively.        
   
   Given the weight $\Phi\in (0,\frac{\pi}{2})^{|E|}$, for any radii $\mathbf{r}\in (0,\frac{\pi}{2}]^{|F|}$, by Lemma \ref{lemma1}, there exists a local spherical metric on the quadrilateral $Q(e)$ such that $Q(e)$ is the corresponding spherical quadrilateral of two spherical disks, where the radii of the disks are $r_{f(p)}$ and $r_{f({p}')}$ and their intersection angle is $\Phi(e)$ as shown in Figure \ref{fig3}.

\begin{figure}[htbp]
\centering
\includegraphics[scale=1.6]{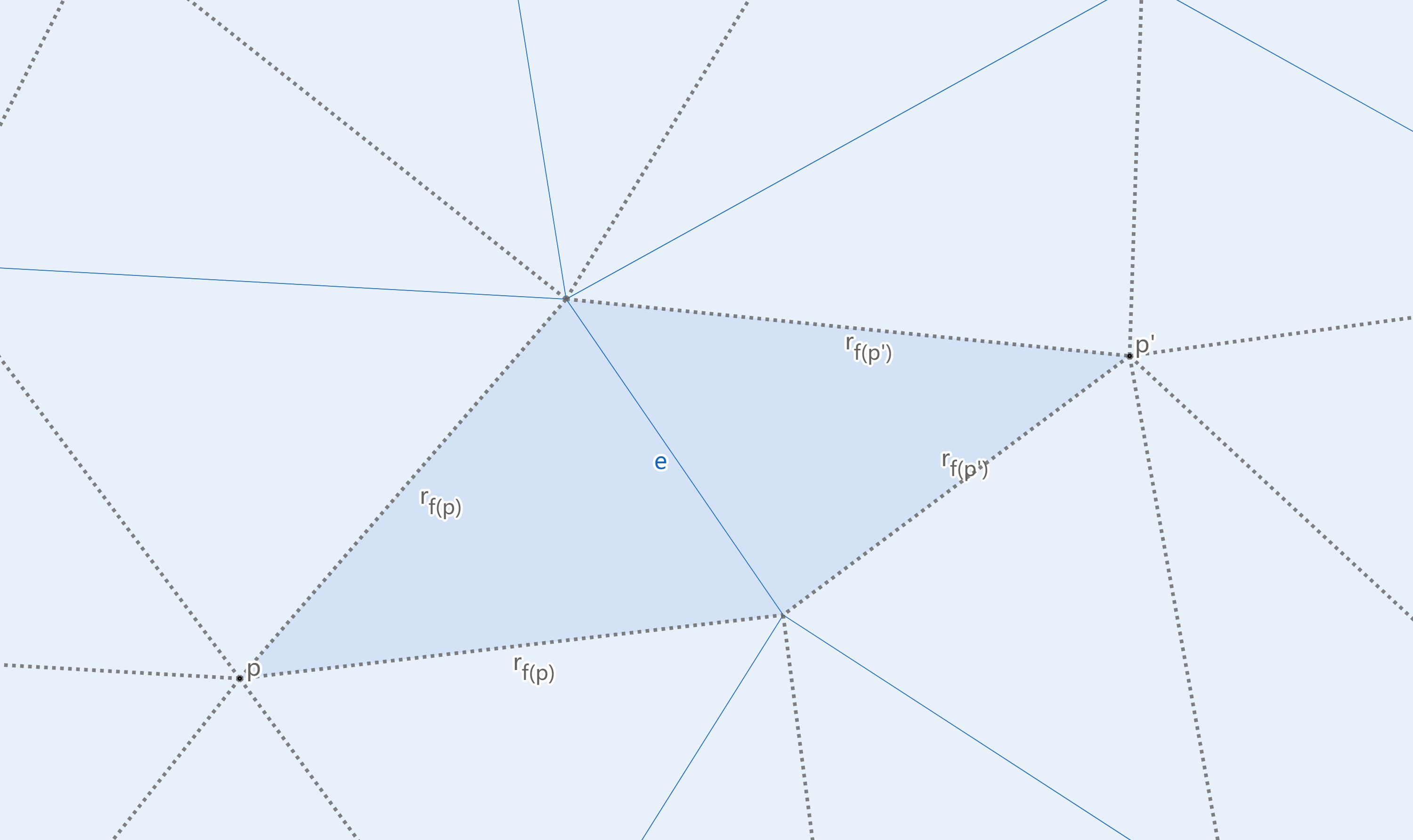}
\captionof{figure}{\small The spherical quadrilateral $Q(e)$ }
\label{fig3}
\end{figure} 
   Gluing these spherical quadrilaterals along their sides together, then we obtain a spherical conical metric on the closed topological surface $S$. Denote the spherical conical metric as $\mu$, we obtain a spherical conical surface $(S,\mu)$. 

   By definition, these spherical disks form a circle pattern $\mathcal{P}$ on $(S,\mu)$ which realizes the weight $\Phi$. By $(\mu, \mathcal{P})$ we denote the spherical conical metric $\mu$ with circle pattern $\mathcal{P}$. Besides, if the radii $\mathbf{r}\in (0,\frac{\pi}{2})^{|F|}$, then the spherical conical metric $\mu$ is called a $\textbf{circle~pattern~metric}$. If there exists some radii equal to $\frac{\pi}{2}$, then the spherical conical metric $\mu$ is called a $\textbf{degenerated~circle~pattern~metric}$.    

\begin{remark}
Given a closed topological surface $S$ with a cellular decomposition $\Sigma$, by the above argument, for the weight $\Phi\in (0,\frac{\pi}{2})^{|E|}$ and the radii $\mathbf{r}\in (0,\frac{\pi}{2}]^{|E|}$, there exists the corresponding circle pattern $\mathcal{P}$ which realizes the weight $\Phi$ and the radii $\mathbf{r}$. 
\end{remark}

For a disk $D\subset \mathbb{S}^2$, suppose that the radius of $D$ is $r\in (0, \frac{\pi}{2}] $. Then the $\partial D$ has constant geodesic curvature $k(\partial D)=\cot r$. Hence we can define a map $\iota$, i.e.
 $$
\begin{array}{cccc}
\iota: (0,\frac{\pi}{2}]^{|F|} &\longrightarrow &\mathbb{R}^{|F|}_{\ge 0}\\
\mathbf{r}=(r_1,\cdots,r_{|F|})^T&\longmapsto & k=(\cot r_1, \cdots, \cot r_{|F|})^T\\
\end{array}
$$
The map $\iota$ is bijective. 

Besides, for simplicity, in this paper, if the geodesic curvatures $k\in\mathbb{R}_{\ge 0}^{|F|}$, then $k$ is also called a $\textbf{spherical~conical~metric}$. If the geodesic curvatures $k\in\mathbb{R}_{>0}^{|F|}$, then $k$ is also called a $\textbf{circle~pattern~metric}$. If there exists some geodesic curvatures equal to $0$, then $k$ is also called a $\textbf{degenerated~circle~pattern~metric}$.

For any prescribed total geodesic curvature $\hat{L}$, we want to know whether there exists some (degenerated) circle pattern metrics with total geodesic curvature $\hat{L}$. In other words, we consider the following problem:

\noindent\textbf{Prescribed Total Geodesic Curvature Problem:} Is there a (degenerated) circle pattern metric with the prescribed total geodesic curvature $\hat{L}$? If it exists, how to find it? 

Our first results solve the first part of prescribed total geodesic curvature problem:    

\begin{theorem}\label{thm10}
Given a closed topological surface $S$ with a cellular decomposition $\Sigma=(V,E,F)$ and the weight $\Phi\in (0,\frac{\pi}{2})^{|E|}$. For the prescribed total geodesic curvature $\hat{L}=(\hat{L}_1,\cdots,\hat{L}_{|F|})^T$ on the face set $F$, there exists a circle pattern metric on $S$ with total geodesic curvature $\hat{L}$ if and only if the prescribed total geodesic curvature $\hat{L}\in\mathcal{L}_1$, where 
$$
\mathcal{L}_1=\{(L_1,\cdots,L_{|F|})^T\in \mathbb{R}^{|F|}_{>0}~|~\sum_{f\in F'}L_f<2\sum_{e\in E_{F'}}\Phi(e), \forall F'\subset F\}.
$$ 
Moreover, the circle pattern metric is unique up to isometry if it exists.
\end{theorem}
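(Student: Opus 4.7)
The plan is to solve the prescribed total geodesic curvature problem variationally, in the spirit of Colin de Verdière, Chow--Luo, Bobenko--Springborn, Nie \cite{nie}, and the authors' prior work \cite{GHZ}. One constructs a strictly convex ``Ricci energy'' whose gradient is the total-geodesic-curvature map, and realizes the desired circle pattern metric as the unique critical point of a modified functional; uniqueness is then automatic, and existence reduces to a coercivity analysis that will identify the admissible cone $\mathcal{L}_1$.

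First I would pass to conformal coordinates $u_f=\log\tan(r_f/2)\in(-\infty,0)$, so that the open cube $(-\infty,0)^{|F|}$ parametrizes precisely the non-degenerate circle pattern metrics. Using the explicit expression for $L_f(\mathbf{r})$ from \cite{nie,GHZ}, the Jacobian $\partial L_f/\partial u_{f'}$ is expected to be symmetric and positive definite, so there is a smooth, strictly convex potential $F(\mathbf{u})$ with $\nabla F=L$. Setting
$$
G_{\hat L}(\mathbf{u})\;=\;F(\mathbf{u})-\sum_{f\in F}\hat L_f\,u_f,
$$
the critical points of $G_{\hat L}$ are exactly the circle pattern metrics with prescribed curvature $\hat L$, and strict convexity gives uniqueness for free. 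Hence existence reduces to showing that $G_{\hat L}$ attains its minimum in $(-\infty,0)^{|F|}$.

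The heart of the proof is a boundary analysis of $L_f(\mathbf{u})$ as some radii approach $\pi/2$. For any $F'\subset F$, I would sum the geodesic-curvature contributions of the arcs of $\partial D_f$ inside each quadrilateral $Q(e)$: when both faces adjacent to $e$ lie in $F'$, the two arcs inside $Q(e)$ together contribute an amount that tends to $2\Phi(e)$ as $r_f,r_{f'}\to\pi/2$, whereas edges on the boundary of $F'$ contribute strictly less. This yields two things at once. \emph{Necessity}: for any circle pattern metric, $\sum_{f\in F'}L_f<2\sum_{e\in E_{F'}}\Phi(e)$, since the $r_f$ remain strictly less than $\pi/2$. \emph{Sufficiency}: if $\hat L\in\mathcal{L}_1$ and $\mathbf{u}^{(n)}$ is any escaping sequence with $F':=\{f:u_f^{(n)}\to 0^-\}$, writing $F(\mathbf{u})$ as a line integral of $L$ from a fixed basepoint shows
$$
G_{\hat L}(\mathbf{u}^{(n)})\;\ge\;\Big(2\sum_{e\in E_{F'}}\Phi(e)-\sum_{f\in F'}\hat L_f\Big)\min_{f\in F'}|u_f^{(n)}|-C,
$$
so $G_{\hat L}\to+\infty$ by the strict inequality in $\mathcal{L}_1$. (Escape in the direction $u_f\to-\infty$, i.e.\ $r_f\to 0$, is controlled because $L_f$ stays bounded there while $\hat L_f>0$.) Coercivity together with strict convexity then produces the unique minimizer.

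The main obstacle is precisely this boundary analysis: establishing that the leading asymptotic coefficient of $\sum_{f\in F'}L_f$ as the radii in $F'$ degenerate is \emph{exactly} $2\sum_{e\in E_{F'}}\Phi(e)$, not merely bounded by it. This requires careful computation inside each lens region $Q(e)$ and is the only place where the specific spherical geometry and the range $\Phi(e)\in(0,\pi/2)$ are really used. Once the asymptotic is secured, the rest is a routine convex-optimization argument.
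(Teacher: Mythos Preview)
Your overall variational strategy matches the paper's: build a strictly convex potential whose gradient is the total-geodesic-curvature map and then characterize the image. But the boundary analysis, which you correctly identify as the heart of the proof, is backwards, and this breaks the argument as written.

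When $r_f\to\pi/2$ the geodesic curvature $k_f=\cot r_f\to 0$, so $L_{f,e}=k_f\,\ell_{f,e}\to 0$ (arc lengths are bounded by $\pi$); by Gauss--Bonnet $L_{f,e}+L_{f',e}=2\Phi(e)-\mathrm{Area}(B_e)$, and the bigon tends to a lune of area $2\Phi(e)$, so the sum tends to $0$, not to $2\Phi(e)$. It is when $r_f\to 0$ that the bigon collapses, $\mathrm{Area}(B_e)\to 0$, and $L_{f,e}+L_{f',e}\to 2\Phi(e)$. Thus the face set that saturates $\sum_{f\in F'}L_f\to 2\sum_{e\in E_{F'}}\Phi(e)$ is the set with radii shrinking to $0$, not those with $r_f\to\pi/2$. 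Your coercivity inequality then has no content: for $F'=\{f:u_f^{(n)}\to 0^-\}$ one has $\min_{f\in F'}|u_f^{(n)}|\to 0$, so the lower bound does not blow up; and the direction $u_f\to-\infty$ (i.e.\ $r_f\to 0$) is exactly where the constraint $\sum L_f<2\sum\Phi$ becomes tight, not the harmless direction. Once the two limits are swapped the picture is correct --- and the strict positivity $\hat L_f>0$ is what controls the $r_f\to\pi/2$ boundary, while $\hat L\in\mathcal{L}_1$ controls the $r_f\to 0$ boundary.

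There is also a coordinate issue. The closed $1$-form from \cite{nie} is $\sum_f\ell_f\,dk_f$, equivalently $\sum_f L_f\,dK_f$ with $K_f=\ln k_f=\ln\cot r_f$; in your coordinate one computes $dK_f/du_f=-1/\cos r_f$, which is non-constant, so $\sum_f L_f\,du_f$ is \emph{not} closed and a potential $F$ with $\nabla_u F=L$ need not exist. The paper works in $K\in\mathbb{R}^{|F|}$, where the potential $\Lambda$ is globally defined and strictly convex (strictly diagonally dominant Hessian), and then --- rather than a coercivity/minimizer argument --- shows directly that $\nabla\Lambda$ is an embedding whose image misses $\partial\mathcal{L}_1$ while every boundary sequence in $\mathbb{R}^{|F|}$ is sent to $\partial\mathcal{L}_1$, concluding by invariance of domain that $\nabla\Lambda:\mathbb{R}^{|F|}\to\mathcal{L}_1$ is a homeomorphism.
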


\begin{remark}
    Theorem \ref{thm10} is first proved in \cite{nie} and we provide a concrete proof in this paper.
\end{remark} 
 
\begin{theorem}\label{thm11}
Given a closed topological surface $S$ with a cellular decomposition $\Sigma=(V,E,F)$ and the weight $\Phi\in (0,\frac{\pi}{2})^{|E|}$. For the prescribed total geodesic curvature $\hat{L}=(\hat{L}_1,\cdots,\hat{L}_{|F|})^T$ on the face set $F$, there exists a degenerated circle pattern metric on $S$ with total geodesic curvature $\hat{L}$ if and only if the prescribed total geodesic curvature $\hat{L}\in\bar{\partial}\mathcal{L}$, where
$$
\bar{\partial}\mathcal{L}=\bigcup_{1\le m\le |F|-1,1\le i_1<\cdots<i_m\le |F|}\mathcal{L}_{i_1\cdots i_m}\cup\{0\},
$$
$$
\mathcal{L}_{i_1\cdots i_m}=\left\{\begin{array}{l|l}
(0,\cdots,L_{i_1},\cdots,0,\cdots,L_{i_m},0,\cdots,0)^T\in\mathbb{R}^{|F|} & \begin{array}{l}
\sum_{f\in F_{i_1\cdots i_m}'}L_f<2\sum_{e\in E_{F_{i_1\cdots i_m}'}}\Phi(e),\\
\forall F_{i_1\cdots i_m}'\subset F_{i_1\cdots i_m};L_{i_j}>0, 1\le j\le m
\end{array}
\end{array}\right\}.
$$
Moreover, the degenerated circle pattern metric is unique up to isometry if it exists.
\end{theorem}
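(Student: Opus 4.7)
The overall plan is to reduce the degenerated problem to the non-degenerated existence result of Theorem \ref{thm10} by exploiting the observation that a degenerated circle pattern metric is precisely one in which a proper subset of faces carry hemispherical disks (radius $\pi/2$, geodesic curvature $0$), while the remaining faces support a genuine circle pattern whose admissible data must satisfy the obvious inequalities on every sub-collection. The index set $\{i_1,\ldots,i_m\}$ appearing in the statement is then nothing but the record of which faces remain non-degenerate.

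For the necessity direction, I would fix a degenerated metric $\mathbf{r}$ realising the prescribed $\hat L$ and let $\{i_1,\ldots,i_m\}$ index the faces with $\hat L_{i_j}>0$ (equivalently $r_{i_j}<\pi/2$). For any $F'\subset F_{i_1\cdots i_m}$, applying a spherical area / Gauss--Bonnet argument to the union of the spherical quadrilaterals $Q(e)$ with $e\in E_{F'}$ should yield the strict inequality $\sum_{f\in F'}L_f<2\sum_{e\in E_{F'}}\Phi(e)$. This is the face-based analogue of the admissibility condition in Theorem \ref{thm10}, obtained by tracking contributions of arc length times geodesic curvature on interior arcs against the opening angles $\Phi(e)$ along the boundary of the union.

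For the sufficiency direction, given $\hat L\in\mathcal{L}_{i_1\cdots i_m}$, I would view the inequalities defining $\mathcal{L}_{i_1\cdots i_m}$ as the admissibility conditions of Theorem \ref{thm10} for a reduced combinatorial structure in which the faces outside $F_{i_1\cdots i_m}$ carry fixed hemispherical caps. Theorem \ref{thm10} then produces a circle pattern metric on this reduced structure with the prescribed total geodesic curvatures at the active faces, and setting $r_f=\pi/2$ (equivalently $k_f=0$) on the remaining faces yields the desired degenerated metric on $S$. An alternative route, more in the spirit of the flow philosophy of this paper, is a compactness / limit argument: approximate $\hat L$ by a sequence $\hat L^{(n)}\in\mathcal L_1$ obtained by adding small positive perturbations on the zero coordinates, apply Theorem \ref{thm10} to obtain metrics $\mathbf{r}^{(n)}$, and extract a convergent subsequence whose limit degenerates at precisely the prescribed coordinates thanks to the inequalities.

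The main obstacle is the rigorous handling of the passage to the boundary stratum. When $r_f\to\pi/2$ for $f\notin F_{i_1\cdots i_m}$, the spherical quadrilaterals $Q(e)$ adjacent to $f$ degenerate in shape, so one must verify that (i) the limit configuration is still a spherical conical metric on $S$ realising exactly the prescribed $\hat L$, and (ii) the strict inequalities defining $\mathcal{L}_{i_1\cdots i_m}$ genuinely prevent any face in $F_{i_1\cdots i_m}$ from also degenerating. Both points reduce to a careful coercivity and boundary-behaviour analysis of Nie's convex functional on the closed admissible polytope; uniqueness of the degenerated metric then follows from the strict convexity of this functional restricted to each boundary stratum, exactly as rigidity in Theorem \ref{thm10} followed from convexity on the interior.
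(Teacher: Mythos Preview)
Your overall picture is right, and the necessity direction via Gauss--Bonnet on each bigon is exactly what the paper does. The issue is with how you phrase the sufficiency direction. You propose to ``apply Theorem~\ref{thm10} to a reduced combinatorial structure in which the faces outside $F_{i_1\cdots i_m}$ carry fixed hemispherical caps''. But Theorem~\ref{thm10} is stated for radii in $(0,\pi/2)^{|F|}$, and once some radii are frozen at $\pi/2$ you are no longer in its domain; nor do the active faces alone form a cellular decomposition of a surface to which Theorem~\ref{thm10} could be applied as a black box. The edges joining an active face to a degenerate one produce \emph{mixed} bigons (one radius $<\pi/2$, the other $=\pi/2$), and these require their own local analysis.

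The paper handles this not by reduction but by rerunning the whole variational argument directly on each boundary stratum. In Section~\ref{a3} it introduces, alongside the two-variable potential $\Lambda_\phi(K_1,K_2)$, a one-variable potential $\Lambda_\phi(K_1)$ for a bigon with one side a great circle, and proves (Lemma~\ref{lemma3}) that $\partial L_1/\partial K_1>0$ in this degenerate case as well. Then on the stratum $\mathbb{R}^{|F|}_{i_1\cdots i_m}$ it builds a new potential $\Lambda_1$ on $\mathbb{R}^m$ as a sum of two-variable terms for active--active edges and one-variable terms for active--degenerate edges, proves $\Lambda_1$ is strictly convex (Proposition~\ref{pro3}), and shows $\nabla\Lambda_1:\mathbb{R}^m\to\tilde{\mathcal L}_{i_1\cdots i_m}$ is a homeomorphism (Theorem~\ref{thm2}) by repeating the invariance-of-domain / boundary-analysis argument of Theorem~\ref{thm1}. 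Existence and uniqueness then follow at once.

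Your compactness/limit alternative could be made to work (the paper does something in that spirit later, in Theorem~\ref{thm7}, to glue all strata into a single homeomorphism), but for Theorem~\ref{thm11} itself the paper takes the direct stratum-by-stratum route. The missing ingredient in your write-up is the explicit treatment of the mixed bigons via the one-variable potential; once you supply that, your argument becomes essentially the paper's.
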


\begin{theorem}\label{thm12}
Given a closed topological surface $S$ with a cellular decomposition $\Sigma=(V,E,F)$ and the weight $\Phi\in (0,\frac{\pi}{2})^{|E|}$. For the prescribed total geodesic curvature $\hat{L}=(\hat{L}_1,\cdots,\hat{L}_{|F|})^T$ on the face set $F$, there exists a spherical conical metric on $S$ with total geodesic curvature $\hat{L}$ if and only if the prescribed total geodesic curvature $\hat{L}\in\mathcal{L}$, where $\mathcal{L}=\mathcal{L}_1\cup\bar{\partial}\mathcal{L}$. Moreover, the spherical conical metric is unique up to isometry if it exists.
\end{theorem}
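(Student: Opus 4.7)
The plan is to derive Theorem \ref{thm12} as a direct corollary of Theorems \ref{thm10} and \ref{thm11}. The only substantive observation required is that, on the level of admissible vectors of total geodesic curvatures, the class of spherical conical metrics partitions cleanly into circle pattern metrics and degenerated circle pattern metrics, and the corresponding parameter sets $\mathcal{L}_1$ and $\bar{\partial}\mathcal{L}$ are themselves disjoint. No additional analytic input is needed beyond the two preceding theorems.

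First I would recall from the definitions that a spherical conical metric corresponds to a vector $k \in \mathbb{R}^{|F|}_{\ge 0}$. Such a $k$ either has all entries strictly positive, in which case it is by definition a circle pattern metric, or it has at least one zero entry, in which case it is a degenerated circle pattern metric. These two cases are mutually exclusive and together exhaust the spherical conical metrics.

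For the \emph{existence} direction, suppose $\hat{L} \in \mathcal{L} = \mathcal{L}_1 \cup \bar{\partial}\mathcal{L}$. If $\hat{L} \in \mathcal{L}_1$, Theorem \ref{thm10} produces a circle pattern metric realizing $\hat{L}$, which is in particular a spherical conical metric. If $\hat{L} \in \bar{\partial}\mathcal{L}$, Theorem \ref{thm11} produces a degenerated circle pattern metric realizing $\hat{L}$, again a spherical conical metric. Conversely, for the \emph{necessity} direction, if a spherical conical metric realizes $\hat{L}$, the dichotomy above places it either in the scope of Theorem \ref{thm10}, forcing $\hat{L} \in \mathcal{L}_1$, or of Theorem \ref{thm11}, forcing $\hat{L} \in \bar{\partial}\mathcal{L}$; either way $\hat{L} \in \mathcal{L}$.

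Finally I would establish \emph{uniqueness} by noting that $\mathcal{L}_1 \cap \bar{\partial}\mathcal{L} = \emptyset$: every element of $\mathcal{L}_1$ lies in $\mathbb{R}^{|F|}_{>0}$ and therefore has all entries strictly positive, whereas every element of $\bar{\partial}\mathcal{L}$ has at least one vanishing entry by construction. Consequently each $\hat{L} \in \mathcal{L}$ belongs to exactly one of the two sets, and the type of realizing metric (circle pattern or degenerated) is determined by $\hat{L}$ itself, so the uniqueness clauses of Theorems \ref{thm10} and \ref{thm11} transfer directly to Theorem \ref{thm12}. There is no real obstacle in the proof: all of the analytic content—realization and rigidity in each of the two regimes—has already been absorbed into the two preceding theorems, and the remaining argument is purely a disjoint-union bookkeeping.
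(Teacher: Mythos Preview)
Your proposal is correct and follows essentially the same logic as the paper. The paper packages this argument inside the slightly stronger Theorem \ref{thm7}, where the piecewise map $\mathcal{E}$ assembled from $\mathcal{E}_1$ and the $\mathcal{E}_{i_1\cdots i_m}$ is shown to be a homeomorphism $\mathbb{R}^{|F|}_{\ge 0}\to\mathcal{L}$; the bijectivity part of that theorem is exactly your disjoint-union bookkeeping, and the additional continuity statements are not needed for Theorem \ref{thm12} itself (they are used later in the convergence proof of the flow \eqref{s2}).
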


\subsubsection{Prescribed combinatorial Ricci flows for total geodesic curvatures}

Given $\hat{L}=(\hat{L}_1,\cdots,\hat{L}_{|F|})^T\in\mathcal{L}_1$, then we can define the prescribed combinatorial Ricci flows as follows, i.e.
\begin{equation}\label{s1}
\frac{dk_i}{dt}=-(L_i-\hat{L}_i)k_i,~~\forall i\in F.
\end{equation}

For any $k\in\mathbb{R}_{i_1\cdots i_m}^{|F|}$, where $1\le m\le |F|-1$, $1\le i_1<\cdots<i_m\le |F|$ and 
$$
\mathbb{R}^{|F|}_{i_1\cdots i_m}=\{(0,\cdots,k_{i_1},0,\cdots,k_{i_m},\cdots,0)\in\mathbb{R}^{|F|}~|~k_{i_1},\cdots,k_{i_m}>0\},
$$
then $\tilde{k}=(k_{i_1},\cdots,k_{i_m})\in\mathbb{R}_{>0}^m$. Given $\hat{L}\in\mathcal{L}_{i_1\cdots i_m}$, we construct the prescribed combinatorial Ricci flows as follows, i.e.
\begin{equation}\label{s3}
\frac{d\tilde{k}_i}{dt}=-(L_i(\tilde{k})-\hat{L}_i)\tilde{k}_i,~~\forall i\in F_{i_1\cdots i_m},
\end{equation}  
where $F_{i_1\cdots i_m}=\{i_1,\cdots,i_m\}$.

Given the prescribed total geodesic curvature $\hat{L}\in\mathcal{L}_{i_1\cdots i_m}$, we can define the prescribed combinatorial Ricci flows as follows, i.e. 
\begin{equation}\label{s2}
\frac{dk_i}{dt}=-(L_i-\hat{L}_i)k_i,~~\forall i\in F_{i_1\cdots i_m},~
\frac{dk_i}{dt}=-L_ik_i,~~\forall i\in F\setminus F_{i_1\cdots i_m}.
\end{equation}

The following results solve the second part of prescribed total geodesic curvature problem:

\begin{theorem}\label{s4}
Given a closed topological surface $S$ with a cellular decomposition $\Sigma=(V,E,F)$, the weight $\Phi\in (0,\frac{\pi}{2})^{|E|}$ and the prescribed total geodesic curvature $\hat{L}=(\hat{L}_1,\cdots,\hat{L}_{|F|})^T\in\mathcal{L}_1$ on the face set $F$. For any initial geodesic curvature $k(0)\in\mathbb{R}_{>0}^{|F|}$, the solution of the prescribed combinatorial Ricci flows (\ref{s1}) converges to the unique circle pattern metric with the total geodesic curvature $\hat{L}$ up to isometry. 
\end{theorem}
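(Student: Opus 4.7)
The plan is to recast the flow (\ref{s1}) as the negative gradient flow of a strictly convex functional, using the change of variables $u_i = \log k_i$. In these coordinates the flow becomes
\begin{equation*}
\frac{du_i}{dt} = -(L_i(k(u)) - \hat{L}_i), \qquad i \in F.
\end{equation*}
Following Nie's construction of the total geodesic curvature functional, the map $u \mapsto L(k(u))$ is the gradient of a strictly convex potential $H(u)$ on $\mathbb{R}^{|F|}$; consequently the augmented functional
\begin{equation*}
\mathcal{F}(u) = H(u) - \sum_{i \in F} \hat{L}_i u_i
\end{equation*}
is strictly convex with $\nabla \mathcal{F} = L - \hat{L}$, so (\ref{s1}) is precisely $\dot{u} = -\nabla \mathcal{F}(u)$ and $\mathcal{F}$ decreases strictly along non-stationary trajectories.

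First I would establish long-time existence. The right-hand side of (\ref{s1}) is smooth on $\mathbb{R}^{|F|}_{>0}$, and in the $u$-coordinates $\dot{u}_i = -(L_i - \hat{L}_i)$ is uniformly bounded on any compact subset of $\mathbb{R}^{|F|}$. A standard Grönwall estimate on each $u_i$ then rules out $k_i\to 0$ or $k_i\to\infty$ in finite time, giving a solution for all $t\ge 0$.

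Next I would show that $\{u(t)\}_{t\ge 0}$ stays in a compact set. By Theorem \ref{thm10} the assumption $\hat{L}\in\mathcal{L}_1$ provides a unique $k^{*}\in\mathbb{R}^{|F|}_{>0}$ with $L(k^{*})=\hat{L}$, so $u^{*}=\log k^{*}$ is the unique critical point of $\mathcal{F}$. The key analytic point is that $\mathcal{F}$ is \emph{proper} on $\mathbb{R}^{|F|}$: its sublevel sets are bounded. This is the main obstacle and reduces, via strict convexity, to analyzing the asymptotics of $H(u)$ as $\lVert u\rVert\to\infty$ and as some $u_i\to -\infty$, where the combinatorial condition defining $\mathcal{L}_1$ (the strict inequality $\sum_{f\in F'}\hat{L}_f<2\sum_{e\in E_{F'}}\Phi(e)$ for every $F'\subset F$) is exactly what forces $\mathcal{F}(u)\to +\infty$ in every direction. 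Since $\mathcal{F}(u(t))$ is non-increasing, the orbit is trapped in a compact sublevel set.

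Finally, combining long-time existence, compactness of the orbit, and the Lyapunov property, every $\omega$-limit point of $u(t)$ is a critical point of $\mathcal{F}$; uniqueness of the critical point then forces $u(t)\to u^{*}$, hence $k(t)\to k^{*}$. Exponential convergence, if desired, follows from the positive definiteness of $\mathrm{Hess}\,\mathcal{F}$ at $u^{*}$ via linearization of the gradient flow. The most delicate step, as noted, is the properness of $\mathcal{F}$; the remainder is a routine application of the Lyapunov-functional method.
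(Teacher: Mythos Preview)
Your proposal is correct and shares the paper's setup exactly: the change of variables $K_i=\ln k_i$, the strictly convex potential $\tilde\Lambda(K)=\Lambda(K)-\sum_f \hat L_f K_f$ (your $\mathcal{F}$), and the identification of the flow as $\dot K=-\nabla\tilde\Lambda$. The divergence is in how convergence is extracted.

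The paper bypasses properness entirely. Instead it invokes a quantitative estimate for negative gradient flows of smooth convex functions (Lemma~\ref{lemma5}, due to Takatsu): for any $\xi^*\in\mathbb{R}^{|F|}$ and $\tau>0$,
\[
|\nabla\tilde\Lambda(K(\tau))|^2 \le |\nabla\tilde\Lambda(\xi^*)|^2 + \frac{1}{\tau^2}|\xi^*-K(0)|^2.
\]
Taking $\xi^*=\hat K$, the critical point supplied by Theorem~\ref{thm1}, the first term vanishes and one reads off $L(K(t))\to\hat L$ at rate $O(1/t)$. Convergence $K(t)\to\hat K$ then follows not from compactness of the orbit but from the fact, also established in Theorem~\ref{thm1}, that $\nabla\Lambda:\mathbb{R}^{|F|}\to\mathcal{L}_1$ is a homeomorphism. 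Your route---properness of $\mathcal{F}$, hence compact sublevel sets, hence $\omega$-limit set reduces to the unique critical point---is the classical Lyapunov argument and is equally valid; note, incidentally, that properness is less delicate than you suggest: once Theorem~\ref{thm10} furnishes a critical point, a strictly convex $C^1$ function on $\mathbb{R}^{|F|}$ with a minimum is automatically coercive, so the combinatorial inequalities need not be re-examined. The paper's approach is shorter and yields an explicit decay rate, while yours is self-contained and does not rely on the auxiliary gradient-flow inequality.
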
    

\begin{remark}

 Theorem \ref{s4} is first proved in \cite{GHZ} and we provide  a new proof in this paper.
\end{remark}

\begin{theorem}\label{s6}
Given a closed topological surface $S$ with a cellular decomposition $\Sigma=(V,E,F)$, the weight $\Phi\in (0,\frac{\pi}{2})^{|E|}$ and the prescribed total geodesic curvature $\hat{L}\in\mathcal{L}_{i_1\cdots i_m}(1\le m\le |F|-1$, $1\le i_1<\cdots<i_m\le |F|)$ on the face set $F$. For any initial geodesic curvature $\tilde{k}(0)\in\mathbb{R}_{>0}^m$ on the face set $F_{i_1\cdots i_m}$ and 0 on the face set $F\setminus F_{i_1\cdots i_m}$, the solution of the prescribed combinatorial Ricci flows (\ref{s3}) converges to the unique degenerated circle pattern metric with the total geodesic curvature $\tilde{L}=(\hat{L}_{i_1},\cdots,\hat{L}_{i_m})^T$ on $F_{i_1\cdots i_m}$ and 0 on $F\setminus F_{i_1\cdots i_m}$ up to isometry. Moreover, if the solution converges to the degenerated circle pattern metric $\hat{k}$, then $\hat{k}\in\mathbb{R}^{|F|}_{i_1\cdots i_m}$. 
\end{theorem}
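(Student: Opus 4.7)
The plan is to reduce Theorem \ref{s6} to Theorem \ref{s4} on a naturally associated reduced combinatorial structure. The key observation is that setting $k_j = 0$ for $j \in F \setminus F_{i_1\cdots i_m}$ corresponds to $r_j = \pi/2$, i.e., the associated disks are hemispheres. Such faces contribute zero to the total geodesic curvature and the incidence geometry of the quadrilaterals $Q(e)$ around them collapses in a controlled way via the spherical cosine law, so they effectively drop out of the picture and should not influence the dynamics of the surviving curvatures $\tilde{k}$.

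First I would construct a reduced weighted cellular decomposition $\Sigma' = (V', E', F_{i_1\cdots i_m})$ of $S$, with an induced weight $\Phi' \in (0,\frac{\pi}{2})^{|E'|}$, by removing the hemispherical faces in $F \setminus F_{i_1\cdots i_m}$ and re-identifying edges and vertices on their boundaries via the induced spherical geometry. Then I would verify that for any $\tilde{k} \in \mathbb{R}_{>0}^m$, the total geodesic curvature $L_i(\tilde{k})$ of face $i \in F_{i_1\cdots i_m}$ --- computed in the original $(\Sigma,\Phi)$ with the complementary curvatures set to $0$ --- coincides with the total geodesic curvature of face $i$ in $(\Sigma',\Phi')$. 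This should be a careful but routine computation using the spherical cosine laws that determine the edge lengths of $Q(e)$.

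Next I would observe that the admissibility condition $\hat{L} \in \mathcal{L}_{i_1\cdots i_m}$ is exactly the condition for $\tilde{L} = (\hat{L}_{i_1},\ldots,\hat{L}_{i_m})^T$ to lie in the $\mathcal{L}_1$-class associated to $(\Sigma',\Phi')$, since the defining inequalities of $\mathcal{L}_{i_1\cdots i_m}$ range precisely over subsets $F' \subset F_{i_1\cdots i_m}$. Consequently, the flow (\ref{s3}) is identical to the flow (\ref{s1}) on $(\Sigma',\Phi')$ with target curvature $\tilde{L}$. Applying Theorem \ref{s4} to the reduced structure yields long-time existence and convergence of $\tilde{k}(t)$ to the unique circle pattern metric on $(\Sigma',\Phi')$ realizing $\tilde{L}$. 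Extending by $0$ on $F \setminus F_{i_1\cdots i_m}$ produces an element $\hat{k} \in \mathbb{R}^{|F|}_{i_1\cdots i_m}$ which, by construction, is a degenerated circle pattern metric on $S$ with total geodesic curvature $\hat{L}$ on $F_{i_1\cdots i_m}$ and $0$ elsewhere. Its uniqueness up to isometry is immediate from Theorem \ref{thm11}.

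The main obstacle is the first step: making the reduction from $(\Sigma,\Phi)$ to $(\Sigma',\Phi')$ precise and verifying the coincidence of total geodesic curvatures across the two structures. Removing a hemispherical face can merge adjacent edges or identify boundary vertices, and one must check that $(\Sigma',\Phi')$ remains a valid weighted cellular decomposition to which Theorem \ref{s4} applies; in particular, the inherited weights must stay strictly inside $(0,\frac{\pi}{2})$, which is not a priori automatic and may require a short spherical-trigonometry lemma. Once this combinatorial-geometric bookkeeping is settled, the convergence statement follows directly from Theorem \ref{s4}, and the assertion $\hat{k} \in \mathbb{R}^{|F|}_{i_1\cdots i_m}$ is automatic from the construction.
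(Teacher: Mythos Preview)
Your reduction strategy has a genuine gap: the construction of the reduced weighted cellular decomposition $(\Sigma',\Phi')$ is not well-defined. Removing the faces in $F\setminus F_{i_1\cdots i_m}$ from a cellular decomposition of a closed surface does not produce another cellular decomposition of a closed surface, and there is no canonical way to ``re-identify edges and vertices'' so as to obtain one. Worse, the admissibility inequalities defining $\mathcal{L}_{i_1\cdots i_m}$ involve the sums $\sum_{e\in E_{F'}}\Phi(e)$ where $E_{F'}$ is taken in the \emph{original} decomposition $\Sigma$ and includes edges joining a face of $F'$ to a hemispherical face in $F\setminus F_{i_1\cdots i_m}$. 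For your reduction to go through, a hypothetical $(\Sigma',\Phi')$ would have to satisfy $\sum_{e'\in E'_{F'}}\Phi'(e')=\sum_{e\in E_{F'}}\Phi(e)$ for every $F'\subset F_{i_1\cdots i_m}$ simultaneously, and there is no reason such a structure exists (let alone with weights in $(0,\frac{\pi}{2})$).

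The paper avoids this entirely by performing the reduction at the level of the potential function rather than the combinatorics. In Section~\ref{a3} it develops bigon potentials $\Lambda_\phi(K_1)$ on the degenerate stratum $\mathbb{R}\times\{-\infty\}$ (one disk hemispherical), and in Section~\ref{a5} it assembles these into a potential $\Lambda_1$ on $\mathbb{R}^m$ by summing $\Lambda_{\Phi(e)}(K_{f_1(e)},K_{f_2(e)})$ over edges with both adjacent faces in $F_{i_1\cdots i_m}$ and $\Lambda_{\Phi(e)}(K_{f_1(e)})$ over edges with exactly one adjacent face in $F_{i_1\cdots i_m}$. The analogue of Proposition~\ref{pro2} and Theorem~\ref{thm1} is then proved directly for $\Lambda_1$ (Proposition~\ref{pro3}, Theorem~\ref{thm2}), and the convergence of the flow (\ref{s3}) follows from the same negative-gradient-flow estimate (Lemma~\ref{lemma5}) applied to $\tilde\Lambda_1=\Lambda_1-\sum_{f\in F_{i_1\cdots i_m}}\tilde K_f\hat L_f$. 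So the argument really does parallel the proof of Theorem~\ref{s4}, but by rebuilding the analytic machinery on $\mathbb{R}^m$ rather than by invoking Theorem~\ref{s4} on a phantom reduced cell complex.
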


\begin{theorem}\label{s5}
Given a closed topological surface $S$ with a cellular decomposition $\Sigma=(V,E,F)$, the weight $\Phi\in (0,\frac{\pi}{2})^{|E|}$ and the prescribed total geodesic curvature $\hat{L}\in\mathcal{L}_{i_1\cdots i_m} (1\le m\le |F|-1$, $1\le i_1<\cdots<i_m\le |F|)$ on the face set $F$. For any initial geodesic curvature $k(0)\in\mathbb{R}_{>0}^{|F|}$, the solution of the prescribed combinatorial Ricci flows (\ref{s2}) converges to the unique degenerated circle pattern metric with the total geodesic curvature $\hat{L}$ up to isometry. Moreover, if the solution converges to the degenerated circle pattern metric $\hat{k}$, then $\hat{k}\in\mathbb{R}^{|F|}_{i_1\cdots i_m}$. 
\end{theorem}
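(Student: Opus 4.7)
The approach is variational. In logarithmic coordinates $u_i = \log k_i$, the flow (\ref{s2}) takes the unified form $\dot u_i = -(L_i(u) - \hat L_i)$ for all $i \in F$, where $\hat L_i := 0$ is the natural extension for $i \in F \setminus F_{i_1\cdots i_m}$. By the variational identity $\partial L_i / \partial u_j = \partial L_j / \partial u_i$ underlying Theorem \ref{thm10}, there is a strictly convex functional $F_0$ on $\mathbb{R}^{|F|}$ with $\nabla F_0 = L$, and so the flow is the negative Euclidean gradient of $\Psi(u) := F_0(u) - \sum_{i \in F} \hat L_i u_i$, yielding the dissipation identity $\frac{d}{dt}\Psi(u(t)) = -\sum_{i \in F}(L_i(u) - \hat L_i)^2 \leq 0$. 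Long-time existence then follows readily: on $F \setminus F_{i_1\cdots i_m}$ one has $\dot u_i = -L_i(u) \in [-C, 0]$ with $C$ a uniform constant depending only on $(\Sigma, \Phi)$, so those $u_i$ are monotone non-increasing and stay finite on any bounded interval; on $F_{i_1\cdots i_m}$, monotonicity of $\Psi$ combined with its partial coercivity confines $u_i(t)$ to a bounded range on finite times.

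Next, I identify the limit. By Theorem \ref{thm11}, the hypothesis $\hat L \in \mathcal{L}_{i_1\cdots i_m}$ produces a unique degenerated circle pattern metric $\hat k \in \mathbb{R}^{|F|}_{i_1\cdots i_m}$ realizing $\hat L$. Setting $\hat u_i = \log \hat k_i$ for $i \in F_{i_1\cdots i_m}$ and $\hat u_i = -\infty$ elsewhere, $\hat u$ is the unique minimizer of the extended Lyapunov function on the compactification $(\mathbb{R} \cup \{-\infty\})^{|F|}$. Since $\Psi(u(t))$ is decreasing and bounded below by $\Psi(\hat u)$ it converges, and any subsequential cluster point $u^*$ of $u(t)$ in the compactification satisfies $L_i(u^*) = \hat L_i$ at every index $i$ where $u_i^*$ is finite. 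Using the geometric positivity that $L_i > 0$ whenever $k_i > 0$, this forces $u_i^* = -\infty$ (equivalently $k_i^* = 0$) for every $i \in F \setminus F_{i_1\cdots i_m}$. The uniqueness clause of Theorem \ref{thm11} then identifies $u^* = \hat u$, so all cluster points coincide and $u(t) \to \hat u$; that is, $k(t) \to \hat k \in \mathbb{R}^{|F|}_{i_1\cdots i_m}$.

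The main technical obstacle is the extended-domain compactness, and in particular securing an a priori lower bound $u_i(t) \geq -M$ for $i \in F_{i_1\cdots i_m}$ so that the limit keeps $\hat k_i > 0$ on $F_{i_1\cdots i_m}$. This is where the combinatorial hypothesis $\hat L \in \mathcal{L}_{i_1\cdots i_m}$ enters essentially: it is precisely the condition guaranteeing coercivity of the reduced functional $F_0^{\mathrm{red}}(u_{F_{i_1\cdots i_m}}) - \sum_{i \in F_{i_1\cdots i_m}} \hat L_i u_i$ obtained by sending $u_j \to -\infty$ for $j \in F \setminus F_{i_1\cdots i_m}$. Establishing this coercivity, and carefully comparing the level sets of $\Psi$ on the full space with those of the reduced functional in the extended topology, is the delicate part of the argument; it runs parallel to the combinatorial analysis underpinning Theorems \ref{thm11} and \ref{s6} and will reuse those intermediate estimates.
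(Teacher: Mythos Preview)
Your variational setup---log coordinates $K_i=\ln k_i$, the flow as the negative gradient of the strictly convex potential $\Psi(K)=\Lambda(K)-\sum_i \hat L_i K_i$---matches the paper exactly. Where you diverge is in the mechanism for extracting convergence.

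The paper does \emph{not} argue via coercivity, extended-domain compactness, or cluster points. Instead it applies Lemma~\ref{lemma5} (the Takatsu inequality $|\nabla h(\xi(\tau))|^2 \le |\nabla h(\xi^*)|^2 + \tau^{-2}|\xi^*-\xi(0)|^2$ for gradient flows of convex functions) with a \emph{time-dependent} comparison point
\[
\bar K(t)=\bigl(\tilde K_{i_1}(t),\ldots,\tilde K_{i_m}(t),\,-\ln t,\ldots,-\ln t\bigr),
\]
where $\tilde K(t)$ is the solution of the reduced $m$-dimensional flow already controlled by Theorem~\ref{s6}. Since $\tilde K(t)\to\hat K$ is bounded and the remaining entries are $-\ln t$, one has $\tau^{-2}|\bar K(\tau)-K(0)|^2=O((\ln\tau)^2/\tau^2)\to 0$; and since $\bar K(t)\to(\hat K,-\infty,\ldots,-\infty)$, continuity of $L$ up to the boundary gives $|\nabla\Psi(\bar K(t))|^2=|L(\bar K(t))-\hat L|^2\to 0$. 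Hence $L(K(t))\to\hat L$ directly. The conclusion $k(t)\to\hat k\in\mathbb{R}^{|F|}_{i_1\cdots i_m}$ then follows in one stroke from the continuity of $\mathcal{E}^{-1}$ established in Theorem~\ref{thm12} (equivalently Theorem~\ref{thm7}). No a priori bound on $K(t)$ is ever needed.

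Your route is a reasonable alternative in spirit, but the ``delicate part'' you flag is a genuine gap, not just bookkeeping: you have not produced the lower bound $u_i(t)\ge -M$ on $F_{i_1\cdots i_m}$, and the sentence ``$\Psi(u(t))$ is bounded below by $\Psi(\hat u)$'' is not yet meaningful because $\hat u$ lies outside the domain of $\Psi$; you would first have to prove that $\Lambda$ extends continuously as each $K_j\to -\infty$ and that the extended $\Psi$ attains its infimum exactly at $\hat u$. Also, the passage from ``$\Psi(u(t))$ converges'' to ``cluster points satisfy $L_i(u^*)=\hat L_i$'' is unjustified as written. If you want to salvage your approach, note that convexity gives $\tfrac{d}{dt}|\nabla\Psi(u(t))|^2\le 0$, so once you establish that $\Psi$ is bounded below (which does need proof), $|\nabla\Psi(u(t))|\to 0$ follows, i.e.\ $L(k(t))\to\hat L$; then invoking the homeomorphism $\mathcal{E}$ of Theorem~\ref{thm12} finishes immediately and the entire cluster-point/coercivity discussion becomes unnecessary. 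The paper's moving-comparison-point trick is precisely what lets it skip even the bounded-below verification.
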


\begin{remark}

By Theorem \ref{s6} and Theorem \ref{s5}, for finding the degenerated circle pattern metric with the total geodesic curvature $\hat{L}\in\mathcal{L}_{i_1\cdots i_m}~(1\le m\le |F|-1, 1\le i_1<\cdots<i_m\le |F|)$, we have two methods. The first one is by constructing prescribed combinatorial Ricci flows (\ref{s3}) and the second one is by constructing prescribed combinatorial Ricci flows (\ref{s2}). Then the solutions of the two flows will converge to the same degenerated circle pattern metric with the total geodesic curvature $\hat{L}$ in different dimensions (see figure \ref{fig6}).  
\end{remark}

\begin{figure}[htbp]
\centering
\includegraphics[scale=0.6]{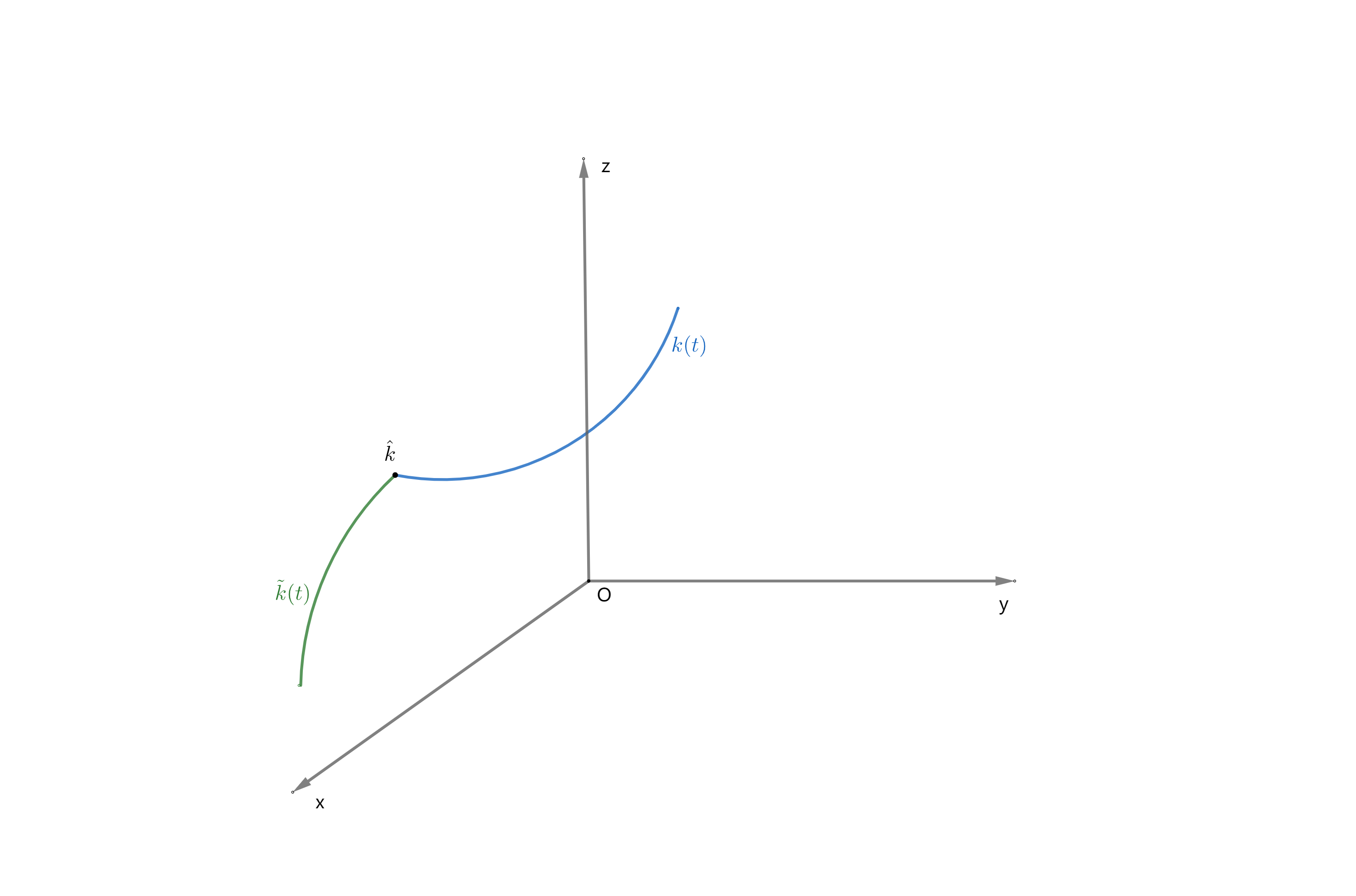}
\captionof{figure}{\small The solutions of the two flows converge to $\hat{k}$  }
\label{fig6}
\end{figure} 

\noindent\textbf{Organization} This paper is organized as follows. In section \ref{a1}, we define the circle patterns on spherical surfaces. In section \ref{a2}, we study the moduli space of spherical conical metrics. In section \ref{a3}, we construct the potential functions on bigons and define the moduli space of spherical bigons with given angle. In section \ref{a4}, we prove the existence and rigidity of circle pattern metrics for prescribed total geodesic curvatures. In section \ref{a5}, we prove the existence and rigidity of degenerated circle pattern metrics for prescribed total geodesic curvatures. In section \ref{a6}, we prove the existence and rigidity of spherical conical metrics for prescribed total geodesic curvatures. In section \ref{a7}, we study the convergence of prescribed combinatorial Ricci flows for circle pattern metrics. In section \ref{a8}, we study the convergence of prescribed combinatorial Ricci flows for degenerated circle pattern metrics.

\section{ Circle patterns on spherical surfaces}\label{a1}

Let $S$ be a topological surface, then $S$ is called a spherical surface if every
point $p$ in $S$ has a neighborhood $U$ such that there is a isometry from $U$ onto an open subset of $\mathbb{S}_{\alpha}^2$, where $\mathbb{S}_{\alpha}^2$ is a unit sphere with a cone angle $\alpha>0$. We also often call the spherical surface $S$ a surface with spherical conical metric.

By $D_{\alpha}(o,r)$ we denote the open disk of radius $r$ in $\mathbb{S}_{\alpha}^2$ centered at the cone point $o\in\mathbb{S}_{\alpha}^2$. In this paper, we only consider the disk $D_{\alpha}(o,r)$ with radius $r\in (0,\frac{\pi}{2}]$. By a bigon in a spherical surface $S$ we denote an open set isometric to the intersection of two disks of radii $r_1, r_2\in (0,\frac{\pi}{2}]$ in $\mathbb{S}_{\alpha}^2$ not containing
each other. Besides, a bigon does not contain any cone point. The angle of a bigon is the interior angle $\phi$ formed by its two sides.      
\begin{figure}[htbp]
\centering
\includegraphics[scale=0.3]{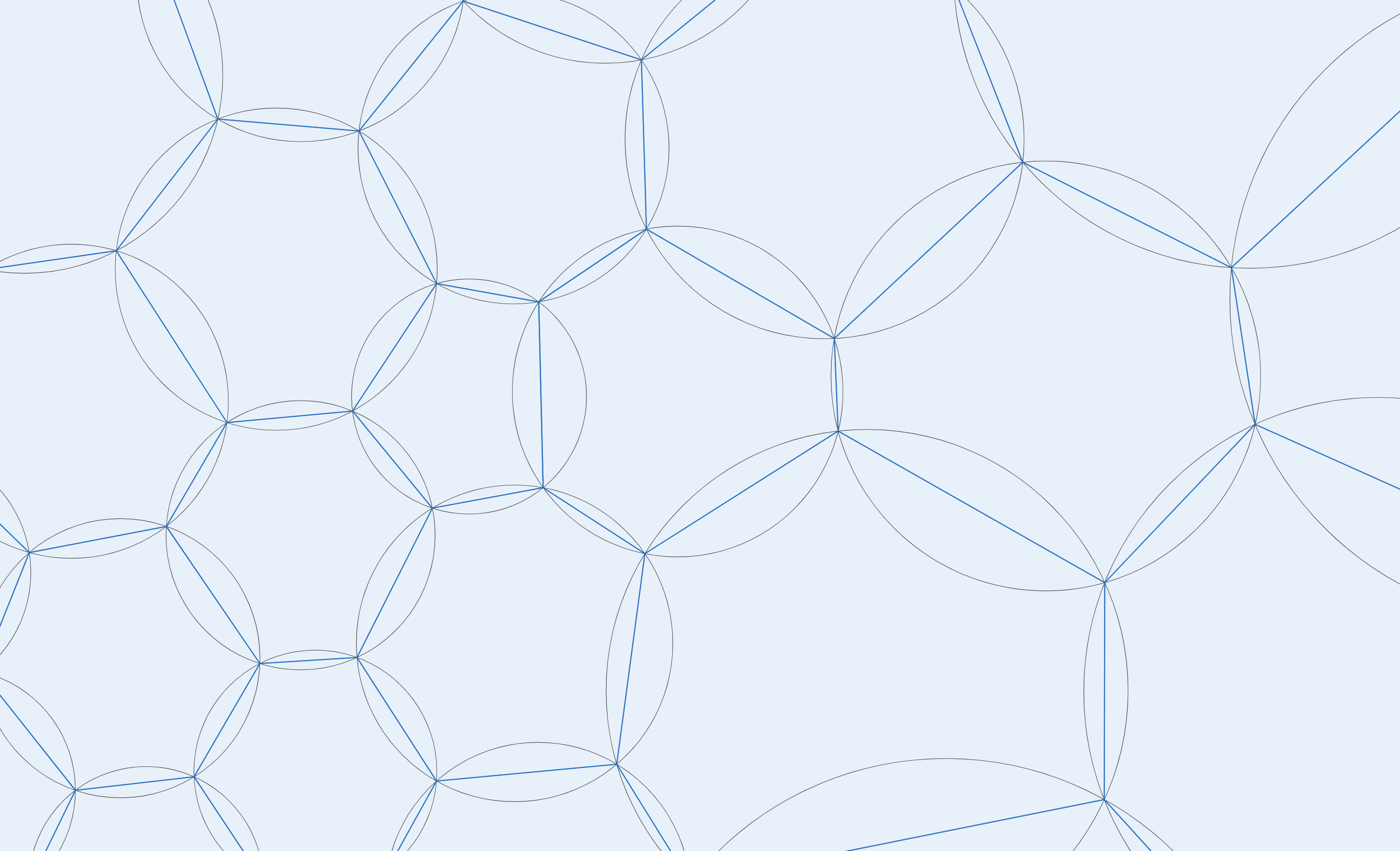}
\captionof{figure}{\small The circle pattern $\mathcal{P}$ and cellular decomposition $\Sigma$ on $S$ }
\label{fig7}
\end{figure}
Using similar analysis in proof of Proposition 2 in \cite{nie}, we have the following lemma:

\begin{lemma}\label{lemma6}
Given some disks $D_i=D_{\alpha_i}\left(o, r_i\right)~(1\le i\le n)$ as above with $\alpha_i>0, r_i \in (0, \frac{\pi}{2}]$. Let $\eta$ be a local isometry from the disjoint union $D_1 \sqcup \cdots \sqcup D_n$ to a closed spherical surface $\Gamma$ such that
\begin{enumerate}
    \item the set $\Gamma\setminus\eta(D_1 \sqcup \cdots \sqcup D_n)$ only has finitely many points;
    \item $\eta$ is at most 2 to 1;  
    \item the $\eta\left(D_i\right)~(1\le i\le n)$ do not contain each other.
\end{enumerate}   
By $\mathscr{B}$ we denote the set $\mathscr{B}:=\{p \in \Gamma \mid \eta^{-1}(p)~\text{has two points}\}$. Then $\eta^{-1}(\mathscr{B})$ is a disjoint union of bigons which fills up the boundary of the $D_i~(1\le i\le n)$.
\end{lemma}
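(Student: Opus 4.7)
The plan is to exploit the 2-to-1 structure of $\eta$ on $\mathscr{B}$ to construct an involutive partner map on $\eta^{-1}(\mathscr{B})$, recognize each connected component of that set as a lens bounded by two round circular arcs, and then use the almost-covering hypothesis to show the closures of these lenses cover $\partial D_i$. I begin by observing that $\mathscr{B}$ is open: if $\eta^{-1}(p) = \{q_1, q_2\}$, taking disjoint isometric-embedding neighborhoods $U_k \ni q_k$ gives $\eta(U_1) \cap \eta(U_2)$ as an open neighborhood of $p$ on which $\eta$ is genuinely 2-to-1. Hence $\eta^{-1}(\mathscr{B})$ is open, and the partner map $\psi$ sending $p$ to the unique other preimage of $\eta(p)$ is well defined, continuous, and involutive.

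Next, I fix a connected component $B$ of $\eta^{-1}(\mathscr{B})$ and suppose $B \subset D_i$. Continuity of $\psi$ forces $\psi(B)$ to lie in a single $D_j$ and makes $\psi|_B : B \to \psi(B)$ an isometry. A boundary point $p_0 \in \partial B$ lying inside $D_i$ cannot lie in $\eta^{-1}(\mathscr{B})$, so $\eta(p_0)$ has $p_0$ as its unique preimage; for any sequence $p_m \to p_0$ in $B$, the partners $\psi(p_m)$ cannot cluster at an interior point of $D_j$ (that would yield a second preimage of $\eta(p_0)$), so $\psi(p_m) \to \partial D_j$. The corresponding portion of $\partial B$ is therefore isometric under $\psi$ to an arc of $\partial D_j$, a smooth geodesic circle of radius $r_j$ around the cone point $o_j$; because $\psi$ is a local isometry, this portion of $\partial B$ lies on a round circle of radius $r_j$ centered at some $o' \in \mathbb{S}_{\alpha_i}^2$. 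The remaining portion of $\partial B$ lies on $\partial D_i$, a round circle of radius $r_i$. Hence $B$ is the intersection of two spherical disks of radii $r_i, r_j \in (0, \pi/2]$, and condition (3) prevents either from containing the other, so $B$ is a bigon in the required sense. Components in different $D_k$'s are automatically disjoint in the disjoint union, so $\eta^{-1}(\mathscr{B})$ is a disjoint union of bigons.

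To see that these bigons fill up each $\partial D_i$, take $x \in \partial D_i$ and a sequence $p_m \to x$ in $D_i$. Up to a subsequence $\eta(p_m) \to y \in \Gamma$; by condition (1), either $y = \eta(q)$ for some $q \in D_k$, or $y$ belongs to a fixed finite exceptional set. In the former case the local inverse of $\eta$ near $q$ supplies, for $m$ large, a second preimage $p_m' \in D_k$ of $\eta(p_m)$ distinct from $p_m$ by condition (3), so $p_m \in \eta^{-1}(\mathscr{B})$. The finitely many $x$ for which every such $y$ is exceptional are accumulation points of the generic boundary points along $\partial D_i$, hence they too lie in the closure $\overline{\eta^{-1}(\mathscr{B})}$.

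The hardest step is the boundary analysis in the middle paragraph: pinning down that $\partial B$ is built from exactly two circular arcs meeting at two points, rather than a more complicated or multiply-connected configuration. This relies on the fact that both $D_i$ and $D_j$ are disks of radius at most $\pi/2$ centered at cone points, so that $\partial D_i$ and the pulled-back $\partial D_j$ are each simple closed round circles in $\mathbb{S}_{\alpha_i}^2$, and two such disks of radius at most $\pi/2$ not containing one another meet in exactly one lens.
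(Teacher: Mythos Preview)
The paper does not prove this lemma: it states that the argument follows Proposition~2 of \cite{nie} and sends the reader there for details. Your sketch therefore goes beyond what the paper itself offers, and its overall plan --- openness of $\mathscr{B}$, the involutive partner map $\psi$, boundary analysis of each component, and the density argument for $\partial D_i$ via condition~(1) --- is the natural one and is essentially what Nie does.

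The step you flag as hardest is genuinely incomplete as written. Your local argument shows that near each interior boundary point the curve $\partial B \cap D_i$ has constant geodesic curvature $\cot r_j$, hence is locally a circular arc. But the global claim that it lies on \emph{one} round circle centered at a single $o'$, so that $B$ is cut out by exactly two disks, requires $\psi|_B$ to be the restriction of a single rigid motion between pieces of the standard sphere. For that you need $B$ to avoid the cone point $o_i$ and the developing map on $B$ to have trivial holonomy; otherwise the various local arcs could sit on distinct circles, and your concluding assertion that ``two such disks of radius at most $\pi/2$ not containing one another meet in exactly one lens'' would not even be well posed in $\mathbb{S}_{\alpha_i}^2$. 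Excluding $o_i \in B$ does use condition~(3) (a cone point can only be partnered by $\psi$ with another cone point, forcing $\psi(o_i)=o_j$ and then one immersed disk to contain the other), but you have asserted rather than argued this, and the simple-connectedness issue is not addressed at all. A smaller point: in the filling step, the distinctness $p_m' \neq p_m$ follows from $p_m \to x \in \partial D_i$ while $p_m' \to q$ in the interior of some $D_k$, not from condition~(3).
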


For more details, we refer \cite{nie} to the readers. Then we can define the circle patterns on spherical surfaces in this paper.

\begin{definition}
A local isometry $\eta: D_1 \sqcup \cdots \sqcup D_n \rightarrow \Gamma$ as in Lemma \ref{lemma6} is called a circle pattern on $\Gamma$ and each connected component of the set $\mathscr{B} \subset \Gamma$ is called a bigon of the circle pattern. For simplicity, we also call $D_i~(1\le i\le n)$ an immersed disk in $\Gamma$ and call the set of immersed disks $\mathcal{P}=\left\{D_1, \cdots, D_n\right\}$ a circle pattern (so that each bigon is either an intersection component of two such disks $D_i, D_j$ or a "self-intersection component" of a single disk $D_i$).   
\end{definition}

Let $(G_{\mathcal{P}},\Phi)$ be the weighted graph on $\Gamma$ with the circle pattern $\mathcal{P}$ which satisfies
\begin{enumerate}
    \item the vertex set $V_{\mathcal{P}}$ is the complement of $D_1 \cup \cdots \cup D_n$ in $\Gamma$: $V_{\mathcal{P}}=\Gamma \backslash D_1 \cup \cdots \cup D_n$;
    \item the edges in edge set $E_{\mathcal{P}}$ are  1 to 1 correspondence with the bigons: given a bigon $B$, then the corresponding edge $e \in E_{\mathcal{P}}$ is a curve in $B$ joining the two endpoints of the bigon $B$;
    \item each edge $e \in E_{\mathcal{P}}$ is weighted by the angle $\Phi(e)\in(0, \pi)$ of the corresponding bigon;
    \item each face $f\in F_{\mathcal{P}}$ corresponds to a disk $D_i\in\mathcal{P}$: $f$ is contained in $\bar{D}_i$ and each vertex on $\partial f$ is in $\partial D_i$. 
\end{enumerate} 

\begin{remark}
Given a closed spherical surface $S$ with a circle pattern $\mathcal{P}$, then the weighted graph $(G_{\mathcal{P}},\Phi)$ gives a cellular decomposition $\Sigma$ of the surface $S$ (see figure \ref{fig7}).
\end{remark}

 \begin{figure}[htbp]
\centering
\includegraphics[scale=1.2]{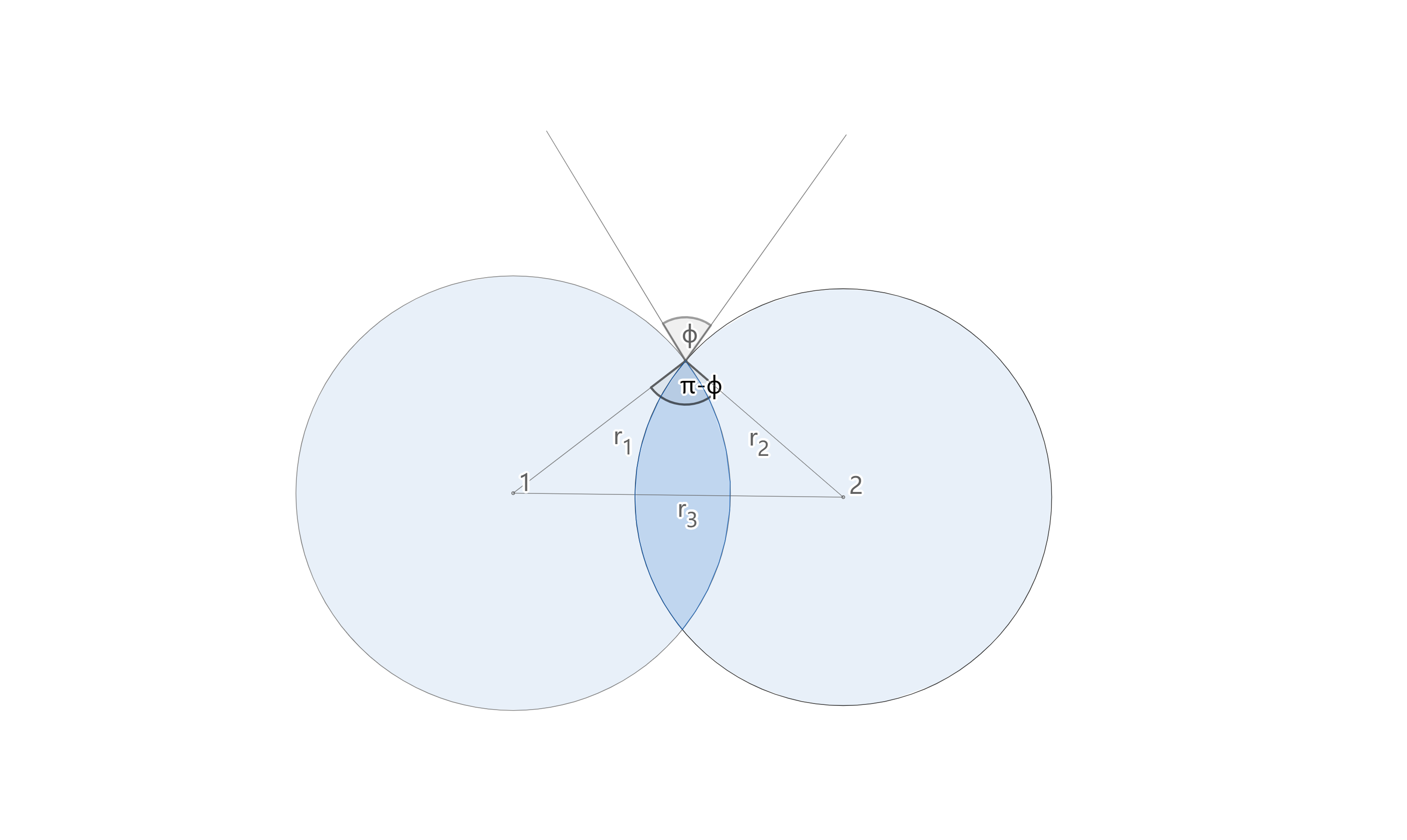}
\captionof{figure}{\small The bigon of angle $\phi$ on $\mathbb{S}^2$}
\label{fig1}
\end{figure} 

\section{The moduli space of spherical conical metrics}\label{a2}

\begin{lemma}\label{lemma1}

For any $r_1,r_2\in (0,\frac{\pi}{2}]$ and any $\phi\in (0,\frac{\pi}{2})$, there exists a bigon of angle $\phi$ which is formed by the disk $D_i\subset \mathbb{S}^2$, where the $D_i$ has radius $r_i$, $i=1,2$. Moreover, the bigon is unique up to isometry.
\end{lemma}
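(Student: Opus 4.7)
The plan is to reduce the problem to a single spherical triangle. Let $o_1, o_2 \in \mathbb{S}^2$ be the (prospective) disk centers and $P$ one of the two intersection points of $\partial D_1$ and $\partial D_2$; then $\triangle o_1 P o_2$ is a spherical triangle with sides $r_1, r_2$ and $d := \mathrm{dist}_{\mathbb{S}^2}(o_1, o_2)$. Since the tangent to $\partial D_i$ at $P$ is perpendicular to the geodesic $o_i P$, a short vector-orientation check shows that the interior bigon angle at $P$ equals $\pi - \angle o_1 P o_2$; by reflection across the geodesic $o_1 o_2$ the same is true at the other intersection point, so the bigon is well-formed with a common angle at both vertices.

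For the existence part, given $r_1, r_2 \in (0, \pi/2]$ and $\phi \in (0, \pi/2)$, I would set
\[
d := \arccos\bigl(\cos r_1 \cos r_2 - \sin r_1 \sin r_2 \cos \phi\bigr),
\]
and first verify that the argument of $\arccos$ lies in $(-1,1)$, so that $d \in (0, \pi)$ is well-defined. Evaluating at the endpoints $\phi \to 0^+$ and $\phi \to \pi^-$ gives $d \to r_1 + r_2$ and $d \to |r_1 - r_2|$ respectively, and monotonicity of $\phi \mapsto d$ then yields the strict inequality $|r_1 - r_2| < d < r_1 + r_2$, which is exactly the condition for two disks of radii $r_1, r_2$ centered at spherical distance $d$ to intersect properly with neither containing the other. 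Placing the two centers on $\mathbb{S}^2$ at distance $d$ and applying the spherical law of cosines to $\triangle o_1 P o_2$ then gives $\cos(\angle o_1 P o_2) = -\cos\phi$, i.e., $\angle o_1 P o_2 = \pi - \phi$, so the resulting bigon indeed has angle $\phi$.

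For uniqueness up to isometry, the spherical law of cosines forces the center-distance $d$ to be determined by $(r_1, r_2, \phi)$ via the formula above, so two bigons of the prescribed type come from ordered pairs of disk centers at the same spherical distance. Since the isometry group of $\mathbb{S}^2$ acts transitively on ordered pairs of points at a fixed spherical distance, there is an isometry of $\mathbb{S}^2$ sending the first pair of centers to the second; it automatically carries $D_1, D_2$ to their counterparts and hence the first bigon to the second.

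The step I expect to be the main obstacle is the careful treatment of the endpoint radii $r_i = \pi/2$, where the disk degenerates to a hemisphere bounded by a great circle. One must verify that the tangent-perpendicularity argument and the bound $|r_1 - r_2| < d < r_1 + r_2$ remain valid in this degenerate case; since $\cos r_i = 0$ and $\sin r_i = 1$, the defining formula for $d$ simplifies considerably and the verification reduces to direct substitution, but this boundary case should be spelled out explicitly for completeness.
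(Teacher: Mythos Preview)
Your proposal is correct and follows essentially the same approach as the paper: both determine the center distance $d$ (the paper's $r_3$) via the spherical law of cosines applied to the triangle $\triangle o_1 P o_2$, verify that $|r_1-r_2|<d<r_1+r_2$ so the disks intersect properly, and deduce uniqueness from the uniqueness of $d$. Your treatment is in fact somewhat more explicit---you justify the relation between the bigon angle and $\angle o_1 P o_2$, invoke transitivity of the isometry group for uniqueness, and flag the hemispherical boundary case---whereas the paper states these points more tersely.
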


\begin{proof}
    First consider the case $r_1\le r_2$. Let $D_i\subset \mathbb{S}^2$ be the disk which has radius $r_i$, $i=1,2$. We suppose that the distance between the centers of $D_1$ and $D_2$ is $r_3$, the intersection angle of $D_1$ and $D_2$ is $\phi$. By the law of cosines in spherical geometry, we have the following identity, i.e.
    $$
    \cos r_3=\cos r_1 \cos r_2+\sin r_1 \sin r_2 \cos (\pi-\phi)=\cos r_1 \cos r_2-\sin r_1 \sin r_2 \cos \phi.
    $$
    
    For $r_1,r_2\in (0, \frac{\pi}{2}]$, $\phi\in (0,\frac{\pi}{2})$, we have
    $$ 
    \cos(r_1+r_2)<\cos r_3=\cos r_1 \cos r_2-\sin r_1 \sin r_2 \cos \phi<\cos(r_1+r_2).
    $$

   Since $0\le r_1+r_2, r_2-r_1\le \pi$, $0<r_3<r_1+r_2\le \pi$, we know that the $r_3$ is unique and $r_3\in (r_2-r_1, r_1+r_2)$. The $D_1\cap D_2$ is the bigon of angle $\phi$ which is formed by the disks $D_1, D_2\subset \mathbb{S}^2$ as shown in Figure \ref{fig1}. Since $r_3$ is unique, the bigon is unique up to isometry.
   
   For the case $r_1>r_2$, it is similar to the analysis above. This completes the proof. 
\end{proof}

\begin{remark}
For the disks $D_1, D_2\subset\mathbb{S}^2$ with radii $r_1, r_2\in (0,\frac{\pi}{2}]$ and the intersection angle $\phi\in (0,\frac{\pi}{2})$, $\bar{D}_1$ do not contain the center of $D_2$ and $\bar{D}_2$ do not contain the center of $D_1$. 
\end{remark}

   Given a cellular decomposition $\Sigma$ with the weight $\Phi\in (0,\frac{\pi}{2})^{|E|}$ on the closed topological surface $S$, then we can define the moduli space $\mathcal{T}$ of all spherical conical metrics with circle patterns on $S$ which realize the weighted graph $(\Sigma,\Phi)$, i.e.
   $$
   \mathcal{T}:=\left\{\begin{array}{l|l}
(\mu, \mathcal{P}) & \begin{array}{l}
\mu \text { is a spherical conical metric on } S ; \\
\mathcal{P} \text { is a circle pattern on }(S, \mu) \text { such that } G_{\mathcal{P}} \\
\text { is isotopic to}~\Sigma~\text{and realizes the weight}~\Phi
\end{array}
\end{array}\right\} / \sim,
$$
where $(\mu_1, \mathcal{P}_1)\sim (\mu_2, \mathcal{P}_2)$ if and only if there exists an isometry $f: S\to S$ such that $f^*\mu_2=\mu_1, f^*\mathcal{P}_2=\mathcal{P}_1$ and $f$ is isotopic to the identity map. Then we can define a map $\kappa$ from $(0,\frac{\pi}{2}]^{|F|}$ to $\mathcal{T}$. By Lemma \ref{lemma1}, the map $\kappa$ is well-defined. Besides, we have that $(0,\frac{\pi}{2}]^{|F|}\cong \mathcal{T}$, i.e.  
\begin{proposition}\label{pro1}
    Given the weighted graph $(\Sigma,\Phi)$ on $S$, then the map $\kappa$
     $$
\begin{array}{cccc}
\kappa: (0,\frac{\pi}{2}]^{|F|} &\longrightarrow &\mathcal{T}\\
\mathbf{r}=(r_1,\cdots,r_{|F|})^T&\longmapsto & [(\mu,\mathcal{P})]\\
\end{array}
$$
is bijective.
\end{proposition}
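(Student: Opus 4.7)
The plan is to prove that $\kappa$ is well-defined, surjective, and injective, in that order.

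\textbf{Well-definedness.} Given $\mathbf{r}\in(0,\frac{\pi}{2}]^{|F|}$, I would first observe that Lemma \ref{lemma1} supplies, for every $1$-cell $e\in E$, a unique-up-to-isometry spherical quadrilateral $Q(e)$ whose defining data are $r_{f(p)}$, $r_{f(p')}$ and $\Phi(e)$. The two edges of $Q(e)$ meeting the vertex $p$ have common length $r_{f(p)}$ (they are radii of the disk centered at $p$), and similarly for $p'$. When two quadrilaterals $Q(e)$ and $Q(e')$ share a common auxiliary edge (a segment from a vertex $v\in V$ to an auxiliary point $p_f$), that edge is a radius of the disk centered at $p_f$ in both quadrilaterals, so its length $r_f$ is the same from either side; hence the gluing is isometric and produces a well-defined spherical conical metric $\mu$ on $S$. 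The disks centered at the $p_f$'s are recovered as unions of the relevant sectors of the $Q(e)$'s, and together they form a circle pattern $\mathcal{P}$ whose weighted graph $G_\mathcal{P}$ is by construction isotopic to $\Sigma$ and realizes $\Phi$. Thus $\kappa(\mathbf{r})=[(\mu,\mathcal{P})]\in\mathcal{T}$.

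\textbf{Surjectivity.} Let $[(\mu,\mathcal{P})]\in\mathcal{T}$ with $\mathcal{P}=\{D_1,\dots,D_{|F|}\}$ and let $\psi\colon G_\mathcal{P}\to\Sigma$ be an isotopy on $S$; this identifies each face $f\in F$ with a unique disk $D_{i(f)}\in\mathcal{P}$. Let $r_f\in(0,\frac{\pi}{2}]$ be the radius of $D_{i(f)}$ and set $\mathbf{r}=(r_1,\dots,r_{|F|})^T$. I would then verify that $\kappa(\mathbf{r})=[(\mu,\mathcal{P})]$: the quadrilateral over each edge $e$ in the metric $\mu$ is formed by two disks of radii $r_{f(p)},r_{f(p')}$ meeting at angle $\Phi(e)$, so by the uniqueness clause of Lemma \ref{lemma1} it is isometric to the $Q(e)$ built from $\mathbf{r}$. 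Assembling these local isometries along the shared radial edges produces an isometry from $\kappa(\mathbf{r})$ to $(\mu,\mathcal{P})$ that is isotopic to the identity (because it agrees with $\psi$ on the combinatorics).

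\textbf{Injectivity.} Suppose $\kappa(\mathbf{r})=\kappa(\mathbf{r}')$, witnessed by an isometry $f\colon S\to S$ with $f^*\mu'=\mu$, $f^*\mathcal{P}'=\mathcal{P}$, and $f$ isotopic to the identity. Since $f$ is isotopic to the identity, it preserves the combinatorial identification of faces of $\Sigma$ with disks of the circle pattern; for each $f\in F$ it sends the disk of radius $r_f$ in $(S,\mu,\mathcal{P})$ to the disk of radius $r_f'$ in $(S,\mu',\mathcal{P}')$. Because $f$ is an isometry, the radius of a disk is preserved, giving $r_f=r_f'$ for all $f$, hence $\mathbf{r}=\mathbf{r}'$.

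\textbf{Main obstacle.} The only delicate point is the isotopy bookkeeping in surjectivity: one must make sure that the local isometries on each $Q(e)$ (supplied by Lemma \ref{lemma1}) can be chosen compatibly along shared auxiliary edges and assembled into a \emph{global} isometry of $S$ that is isotopic to the identity, rather than merely some global isometry. I would handle this by fixing the combinatorial identification $\psi$ first, orienting each $Q(e)$ using $\psi$, and then using Lemma \ref{lemma1}'s uniqueness to make consistent choices on each quadrilateral so that the resulting piecewise-isometry is continuous across every shared edge and respects the isotopy class of $\psi$.
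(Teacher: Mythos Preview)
Your proposal is correct and follows essentially the same approach as the paper: both arguments use the uniqueness clause of Lemma~\ref{lemma1} to identify the quadrilateral $Q(e)$ from the data $(r_{f(p)},r_{f(p')},\Phi(e))$, deduce surjectivity by reading off the radii from a given $[(\mu,\mathcal{P})]$ and matching local metrics, and deduce injectivity from the fact that an isometry of circle patterns preserves the disk radii. Your treatment is simply more detailed---in particular your explicit discussion of well-definedness and of the isotopy bookkeeping in surjectivity spells out points the paper's proof passes over in a single sentence.
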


\begin{proof}
If $\kappa(\mathbf{r}_1)=\kappa(\mathbf{r}_2)$, let $\kappa(\mathbf{r}_i)=[(\mu_i,\mathcal{P}_i)], i=1,2$. Then we know that $(\mu_1,\mathcal{P}_1)\sim (\mu_2,\mathcal{P}_2)$. By definition, the radii of the corresponding disks in circle patterns $\mathcal{P}_1$ and $\mathcal{P}_2$ are equal, i.e. $\mathbf{r}_1=\mathbf{r}_2$. The map $\kappa$ is injective. 

For any $[(\mu,\mathcal{P})]\in\mathcal{T}$, there exists a corresponding radii $\mathbf{r}\in (0,\frac{\pi}{2}]^{|F|}$. Given the weight $\phi$, by Lemma \ref{lemma1}, the local spherical metric generated by $\mathbf{r}$ is isometric to the corresponding local metric of $(\mu,\mathcal{P})$. Hence the spherical metric generated by $\mathbf{r}$ is isometric to the metric of $(\mu,\mathcal{P})$, i.e. $\kappa(\mathbf{r})=[(\mu,\mathcal{P})]$. The map $\kappa$ is surjective.       
\end{proof}

 By Proposition \ref{pro1}, we know that $(0,\frac{\pi}{2}]^{|F|}\cong \mathcal{T}\cong \mathbb{R}^{|F|}_{\ge 0}$.

\section{Potential functions on bigons}\label{a3}

Given $\phi\in (0, \frac{\pi}{2})$, we consider two disks $D_1, D_2\subset \mathbb{S}^2$. Suppose that the radius of disk $D_i$ is $r_i\in (0, \frac{\pi}{2}]$, $i=1,2$. Besides, let the intersection angle of $D_1$ and $D_2$ be $\phi$. By $B$ we denote the bigon which is formed by the intersection of disks $D_1$ and $D_2$. Let $\ell_i$ denote the length of the sides of bigon $B$, $i=1,2$. By $k_i$ we denote the geodesic curvature of $\partial D_i$, i.e. $k_i=\cot r_i$, $i=1,2$.
By $L_i$ we denote the total geodesic curvature of the sides of bigon $B$, i.e. $L_i=\ell_ik_i$, $i=1,2$ (see Figure \ref{fig5}).

\begin{figure}[htbp]
\centering
\includegraphics[scale=1.4]{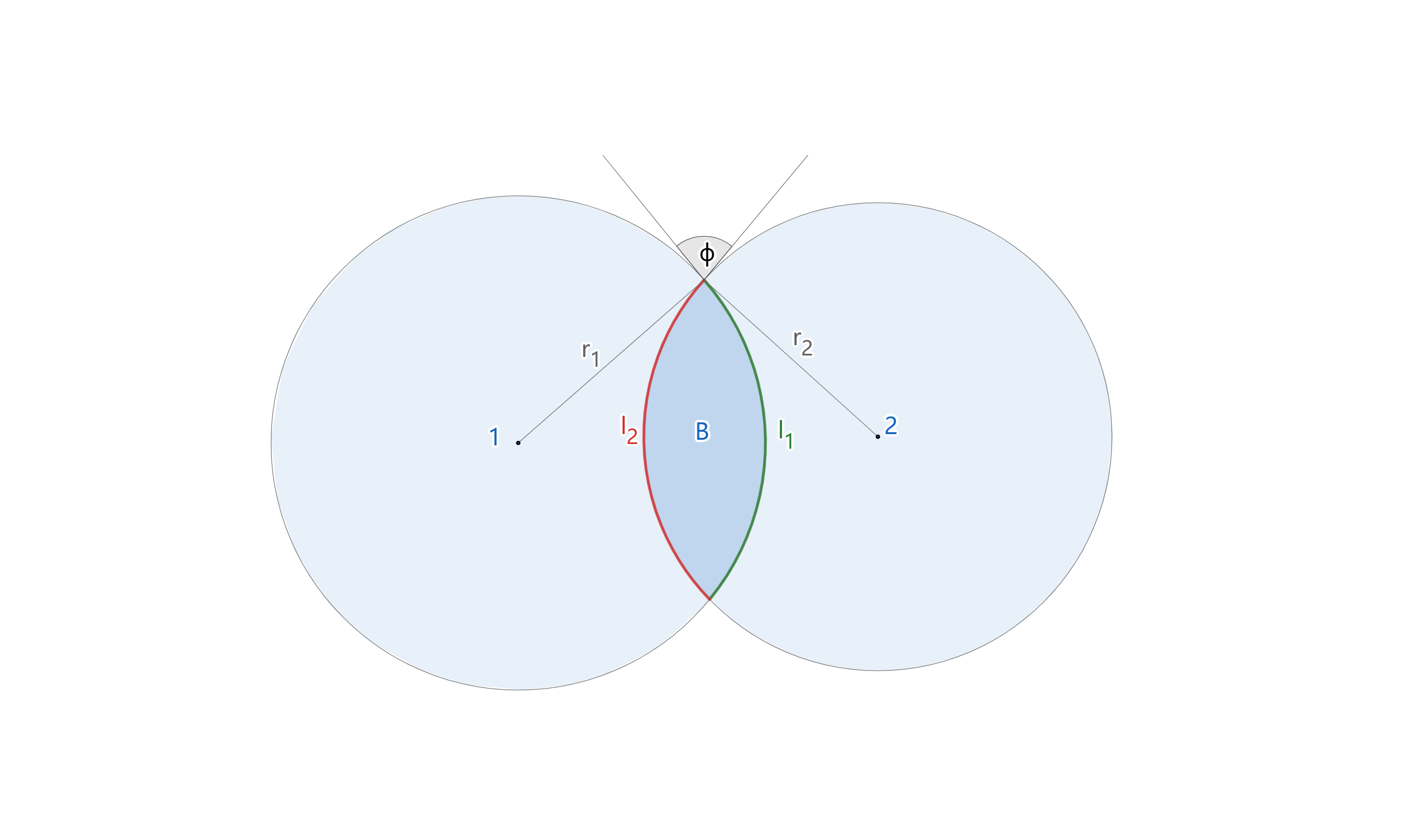}
\captionof{figure}{\small The disk $D_1, D_2\subset\mathbb{S}^2$ }
\label{fig5}
\end{figure} 

\begin{lemma}\label{lemma2}
    Given $\phi\in (0,\frac{\pi}{2})$, then the 1-form $\omega_{\phi}=\ell_1dk_1+\ell_2dk_2$ is closed on $\mathbb{R}^2_{>0}$, $\mathbb{R}_{>0}\times \{k_2=0\}$ and $\{k_1=0\}\times \mathbb{R}_{>0}$. 
\end{lemma}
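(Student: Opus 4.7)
The plan is to reduce closedness to a symmetric identity among partial derivatives and then verify it by spherical trigonometry. Each of $\mathbb{R}_{>0}\times\{k_2=0\}$ and $\{k_1=0\}\times\mathbb{R}_{>0}$ is a one-dimensional submanifold, so after restricting $\omega_\phi$ (on which the excluded $dk_i$ pulls back to zero) the resulting 1-form on a one-dimensional manifold is automatically closed, and there is nothing to check. For the main two-dimensional case on $\mathbb{R}^2_{>0}$, I pass from $(k_1,k_2)$ to $(r_1,r_2)$ by the diffeomorphism $k_i=\cot r_i$ with $r_i\in(0,\pi/2)$, which preserves closedness, so it is enough to check the mixed-partial identity in the $r$-coordinates.

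I will work with the auxiliary spherical triangle whose vertices are the two disk centers $c_1,c_2$ and one of the two intersection points $p\in\partial D_1\cap\partial D_2$, exactly as in the proof of Lemma~\ref{lemma1}. Its sides have lengths $r_1,r_2,r_3$, where $r_3=d(c_1,c_2)$, with opposite interior angles $\theta_2,\theta_1,\pi-\phi$; the arc of $\partial D_i$ forming a side of the bigon subtends angle $2\theta_i$ at $c_i$, hence $\ell_i=2\theta_i\sin r_i$. Using $dr_i=-\sin^2 r_i\,dk_i$, closedness of $\omega_\phi$ in $(r_1,r_2)$ reduces to the symmetric relation
\[
\sin r_2\,\frac{\partial\theta_1}{\partial r_2}=\sin r_1\,\frac{\partial\theta_2}{\partial r_1}.
\]
Two ingredients are read off from standard spherical trigonometry for this triangle: implicit differentiation of the law of cosines $\cos r_j=\cos r_i\cos r_3+\sin r_i\sin r_3\cos\theta_i$ gives $\partial r_3/\partial r_i=\cos\theta_i$, and the law of sines gives $\sin\theta_i=\sin r_j\sin\phi/\sin r_3$ for $\{i,j\}=\{1,2\}$.

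Differentiating $\sin\theta_1=\sin r_2\sin\phi/\sin r_3$ in $r_2$, substituting $\partial r_3/\partial r_2=\cos\theta_2$, and then eliminating $\cos\theta_2$ via $\sin r_2\sin r_3\cos\theta_2=\cos r_1-\cos r_2\cos r_3$, I expect the numerator to collapse to $\sin r_1\sin r_3\cos\theta_1$ after invoking once more the law of cosines in the form $\sin r_1\sin r_3\cos\theta_1=\cos r_2-\cos r_1\cos r_3$. The upshot will be the clean expression
\[
\frac{\partial\theta_1}{\partial r_2}=\frac{\sin r_1\sin\phi}{\sin^2 r_3},
\]
and the $1\leftrightarrow 2$ symmetry of the configuration yields $\partial\theta_2/\partial r_1=\sin r_2\sin\phi/\sin^2 r_3$, so both sides of the displayed identity above equal $\sin r_1\sin r_2\sin\phi/\sin^2 r_3$. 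The main obstacle is precisely this algebraic collapse, since the intermediate expression for $\partial\theta_1/\partial r_2$ is not manifestly symmetric in $1$ and $2$; once it is carried out, closedness on $\mathbb{R}^2_{>0}$ and therefore on all three loci follows.
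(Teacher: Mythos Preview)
Your proposal is correct. For the one-dimensional boundary pieces you and the paper agree: restricting $\omega_\phi$ to a one-dimensional locus gives a $1$-form on a $1$-manifold, which is automatically closed (the paper phrases this as ``$\ell_1$ is smooth in $k_1$, so $d\omega_\phi=0$''). For the main case on $\mathbb{R}^2_{>0}$, however, your route is genuinely different. The paper simply invokes Lemma~8 of \cite{nie} and moves on, whereas you supply a self-contained spherical-trigonometric verification: passing to $(r_1,r_2)$, using $\ell_i=2\theta_i\sin r_i$, and reducing closedness to the symmetric identity $\sin r_2\,\partial\theta_1/\partial r_2=\sin r_1\,\partial\theta_2/\partial r_1$, which you then establish via the law of sines, the differentiation formula $\partial r_3/\partial r_i=\cos\theta_i$, and two applications of the law of cosines. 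The algebraic collapse you anticipate does occur exactly as you describe, yielding $\partial\theta_1/\partial r_2=\sin r_1\sin\phi/\sin^2 r_3$ and its symmetric counterpart. Your argument is longer but makes the geometric content of closedness explicit and does not rely on external references; the paper's citation is more economical but black-boxes precisely this computation. One minor point: when you divide through by $\cos\theta_1$ after differentiating $\sin\theta_1$, you should note that $\theta_1=\pi/2$ is a measure-zero locus handled by continuity, or alternatively differentiate the law of cosines for $\cos r_2$ directly to avoid the division.
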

   
\begin{proof}
    By Lemma 8 in \cite{nie}, the 1-form $\omega_{\phi}$ is closed on $\mathbb{R}^2_{>0}$. On $\mathbb{R}_{>0}\times \{k_2=0\}$, $\omega_{\phi}=\ell_1dk_1$. Since $\ell_1$ is a smooth function with respect to $k_1$, $d\omega_{\phi}=0$, $\omega_{\phi}$ is closed. Similarly, we know that $\omega_{\phi}$ is also closed on $\{k_1=0\}\times \mathbb{R}_{>0}$.      
\end{proof}
Using change of variables $K_i=\ln k_i$, $i=1,2$, by Lemma \ref{lemma2} the 1-form $\omega_{\phi}=L_1dK_1+L_2dK_2$ is closed on $\mathbb{R}^2$, $\omega_{\phi}=L_1dK_1$ is closed on $\mathbb{R}\times \{-\infty\}$ and $\omega_{\phi}=L_2dK_2$ is closed on $\{-\infty\}\times\mathbb{R}$. On $\mathbb{R}^2$, we can define a potential function
$$
\Lambda_{\phi}(K_1,K_2):=\int_0^{\left(K_1, K_2\right)} \omega_\phi.
$$
On $\mathbb{R}\times \{-\infty\}$, we define a potential function
$$
\Lambda_{\phi}(K_1):=\int_0^{K_1} \omega_\phi.
$$
On $\{-\infty\}\times\mathbb{R}$, we define a potential function
$$
\Lambda_{\phi}(K_2):=\int_0^{K_2} \omega_\phi.
$$
These three functions are well-defined. Besides, we have that $\nabla\Lambda_{\phi}(K_1,K_2)=(L_1,L_2)^T$, $\Lambda^{'}_{\phi}(K_1)=L_1$ and $\Lambda^{'}_{\phi}(K_2)=L_2$. 

\begin{lemma}\label{lemma3}
We have that $\frac{\partial L_1}{\partial K_1}>0$ on $\mathbb{R}\times \{-\infty\}$, $\frac{\partial L_2}{\partial K_2}>0$ on $\{-\infty\}\times\mathbb{R}$ and the matrix    
$$
\left(\begin{array}{ll}
\frac{\partial L_1}{\partial K_1} & \frac{\partial L_1}{\partial K_2} \\
\frac{\partial L_2}{\partial K_1} & \frac{\partial L_2}{\partial K_2}
\end{array}\right) 
$$
is positive definite on $\mathbb{R}^2$.
\end{lemma}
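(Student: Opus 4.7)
The plan is to exploit the fact that the matrix in question is exactly the Hessian of the potential $\Lambda_\phi$, since $\nabla\Lambda_\phi = (L_1, L_2)^T$ by construction; the boundary derivatives $\partial L_i/\partial K_i$ on the two rays are similarly second derivatives of the one-variable restrictions of $\Lambda_\phi$. The off-diagonal symmetry of the Hessian is already guaranteed by the closedness of $\omega_\phi$ proven in Lemma \ref{lemma2}, so the statement reduces to strict convexity of $\Lambda_\phi$ on each of the three strata.

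For the interior case on $\mathbb{R}^2$, I would parametrize the bigon using the spherical triangle $o_1 o_2 V$ with $V$ a bigon vertex, sides $r_1, r_2, r_3$ (where $r_3$ is the distance between the centers) and angle $\pi - \phi$ at $V$. The spherical law of cosines gives $\cos r_3 = \cos r_1 \cos r_2 - \sin r_1 \sin r_2 \cos\phi$, and the law of sines yields $\sin\theta_1 = \sin r_2 \sin\phi/\sin r_3$ (and symmetrically for $\theta_2$), where $\theta_i$ is the triangle angle at $o_i$. Combined with $\ell_i = 2\theta_i \sin r_i$ and $k_i = \cot r_i$, this gives the compact formula $L_i = 2\theta_i \cos r_i$. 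Positive definiteness of $(\partial L_i/\partial K_j)$ is precisely what is established in \cite{nie} in the course of proving convexity of the bigon potential, and I would cite that result. A self-contained argument would differentiate the above identities, convert via $\partial r_i/\partial K_j = -\delta_{ij} \sin r_i \cos r_i$, and verify that the resulting $2\times 2$ matrix has strictly positive diagonal and strictly positive determinant, with the determinant simplifying to a clean expression in $\sin r_3$, $\sin\phi$ and $\cos r_1, \cos r_2$.

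For the boundary case $\mathbb{R}\times\{-\infty\}$ where $r_2 = \pi/2$, the law of cosines collapses to $\cos r_3 = -\sin r_1 \cos\phi$, and applying the side law of cosines to $r_2$ gives $\cot\theta_1 = \cos r_1 \cot\phi$. Hence
$$
L_1(r_1) = 2\arctan\!\left(\frac{\tan\phi}{\cos r_1}\right) \cos r_1.
$$
Setting $u := \cos r_1 \in (0,1)$, one computes $K_1 = \ln(u/\sqrt{1-u^2})$ and checks that this is strictly increasing in $u$, so $\partial L_1/\partial K_1$ has the same sign as $\partial L_1/\partial u$. Direct differentiation reduces the latter to the elementary inequality $\arctan(t) > t/(1+t^2)$ for $t > 0$, with $t = \tan\phi/u$; this follows from $g(0)=0$ and $g'(t) = 2t^2/(1+t^2)^2 > 0$ where $g(t) := \arctan(t) - t/(1+t^2)$. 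The symmetric case on $\{-\infty\}\times\mathbb{R}$ is identical.

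The main obstacle I anticipate is the interior case: although the diagonal entries of the Jacobian are easy to sign from the formula $L_i = 2\theta_i \cos r_i$, strict positivity of the determinant requires a careful trigonometric simplification. The most efficient route is to invoke Nie's convexity result directly rather than reproduce the full computation.
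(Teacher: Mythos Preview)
Your proposal is correct. For the interior case on $\mathbb{R}^2$ you do exactly what the paper does: cite the convexity result (Lemma~9) from \cite{nie}. The boundary case is where you diverge. The paper's argument there is a two-line reduction: since $k_2=0$ implies $L_2=0$, Gauss--Bonnet gives $\text{Area}(B)=2\phi-L_1$, and then $\partial L_1/\partial K_1 = -\partial\,\text{Area}(B)/\partial K_1>0$ by another appeal to Lemma~9 of \cite{nie}. You instead derive the closed form $L_1=2\cos r_1\,\arctan(\tan\phi/\cos r_1)$ via spherical trigonometry and reduce positivity of the derivative to the elementary inequality $\arctan t>t/(1+t^2)$. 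Your route is longer but entirely self-contained and avoids any question of whether the cited area-monotonicity statement in \cite{nie} extends to the degenerate boundary $r_2=\pi/2$; the paper's route is quicker but leans on \cite{nie} twice. Either is fine.
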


\begin{proof}
    By Lemma 9 in \cite{nie}, we know that on $\mathbb{R}^2$, the matrix
    $$
\left(\begin{array}{ll}
\frac{\partial L_1}{\partial K_1} & \frac{\partial L_1}{\partial K_2} \\
\frac{\partial L_2}{\partial K_1} & \frac{\partial L_2}{\partial K_2}
\end{array}\right) 
$$
is positive definite. 

On $\mathbb{R}\times \{-\infty\}$, since $k_2=0$, $L_2=0$. By Gauss-Bonnet formula, we have
$$
\text{Area}(B)=2\phi-L_1.
$$
By Lemma 9 in \cite{nie}, we know that $\frac{\partial \text{Area}(B)}{\partial K_1}<0$, then $\frac{\partial L_1}{\partial K_1}>0$.
Similarly, we know that $\frac{\partial L_2}{\partial K_2}>0$ on $\{-\infty\}\times\mathbb{R}$. 
\end{proof}

By Lemma \ref{lemma3}, $\Lambda_{\phi}(K_1,K_2)$ is a strictly convex function on $\mathbb{R}^2$, $\Lambda_{\phi}(K_1)$ is a strictly convex function on $\mathbb{R}\times \{-\infty\}$ and $\Lambda_{\phi}(K_2)$ is a strictly convex function on $\{-\infty\}\times\mathbb{R}$.

We can define the moduli space $\mathcal{B}_{\phi}$ of spherical bigons with given angle $\phi\in (0,\frac{\pi}{2})$, i.e.
$$
   \mathcal{B}_{\phi}:=\left\{\begin{array}{l|l}
B & \begin{array}{l}
B \text { is a spherical bigon which is formed by} \\
\text {the intersection of two disks}~D_1,D_2\subset\mathbb{S}^2~\text{with} \\
\text { given intersection angle}~\phi\in (0,\frac{\pi}{2}),~\text{where}\\
\text{the radii}~r_1, r_2~\text{ of the two disks are in }~(0,\frac{\pi}{2}]  \\
\end{array}
\end{array}\right\} / \sim,
$$   
where $B_1\sim B_2$ if and only if $B_1$ and $B_2$ are isomorphic.

Then we can construct a map $\gamma$ from $\mathcal{B}_{\phi}$ to $(0, \frac{\pi}{2}]^2$, i.e.
$$
\begin{aligned}
\gamma: \mathcal{B}_{\phi} & \longrightarrow (0, \frac{\pi}{2}]^2 \\
{[B] } & \longmapsto\left(r_1, r_2\right)
\end{aligned},
$$
The map $\gamma$ is well-defined. Besides, by Lemma \ref{lemma1}, $\gamma$ is a bijective map, $\mathcal{B}_{\phi}\cong  (0, \frac{\pi}{2}]^2$.   

Then we define some subspaces of the moduli space $\mathcal{B}_{\phi}$, i.e.
$$
   \mathcal{B}^0_{\phi}:=\left\{\begin{array}{l|l}
B & \begin{array}{l}
B \text { is a spherical bigon which is formed by} \\
\text {the intersection of two disks}~D_1,D_2\subset\mathbb{S}^2~\text{with} \\
\text { given intersection angle}~\phi\in (0,\frac{\pi}{2}),~\text{where}\\
\text{the radii}~r_1, r_2~\text{ of the two disks are}~\frac{\pi}{2}  \\
\end{array}
\end{array}\right\} / \sim,
$$  
$$
   \mathcal{B}^1_{\phi}:=\left\{\begin{array}{l|l}
B & \begin{array}{l}
B \text { is a spherical bigon which is formed by} \\
\text {the intersection of two disks }~D_1,D_2\subset\mathbb{S}^2~\text{with} \\
\text { given intersection angle}~\phi\in (0,\frac{\pi}{2}),~\text{where}\\
\text{the radii}~r_1, r_2~\text{ of the two disks satisfy}\\
r_1\in (0,\frac{\pi}{2}), r_2=\frac{\pi}{2}  \\
\end{array}
\end{array}\right\} / \sim,
$$  
$$
   \mathcal{B}^2_{\phi}:=\left\{\begin{array}{l|l}
B & \begin{array}{l}
B \text { is a spherical bigon which is formed by} \\
\text {the intersection of two disks }~D_1,D_2\subset\mathbb{S}^2~\text{with} \\
\text { given intersection angle}~\phi\in (0,\frac{\pi}{2}),~\text{where}\\
\text{the radii}~r_1, r_2~\text{ of the two disks satisfy}\\
r_1=\frac{\pi}{2}, r_2\in (0,\frac{\pi}{2})  \\
\end{array}
\end{array}\right\} / \sim,
$$     
$$
   \mathcal{B}^3_{\phi}:=\left\{\begin{array}{l|l}
B & \begin{array}{l}
B \text { is a spherical bigon which is formed by} \\
\text {the intersection of two disks }~D_1,D_2\subset\mathbb{S}^2~\text{with} \\
\text { given intersection angle}~\phi\in (0,\frac{\pi}{2}),~\text{where}\\
\text{the radii}~r_1, r_2~\text{ of the two disks are in }~(0,\frac{\pi}{2})  \\
\end{array}
\end{array}\right\} / \sim,
$$      
It is easy to know that the subspace $\mathcal{B}^0_{\phi}, \mathcal{B}^1_{\phi}, \mathcal{B}^2_{\phi}$ and $\mathcal{B}^3_{\phi}$ are mutually disjoint. Besides, we have that $\mathcal{B}_{\phi}=\mathcal{B}^0_{\phi} \sqcup \mathcal{B}^1_{\phi} \sqcup \mathcal{B}^2_{\phi} \sqcup \mathcal{B}^3_{\phi}.$ Similarly, we have that $\mathcal{B}^0_{\phi}\cong \{(\frac{\pi}{2}, \frac{\pi}{2})\}, \mathcal{B}^1_{\phi}\cong (0, \frac{\pi}{2})\times \{\frac{\pi}{2}\}, \mathcal{B}^2_{\phi}\cong \{\frac{\pi}{2}\}\times (0, \frac{\pi}{2})$ and $\mathcal{B}^3_{\phi}\cong (0, \frac{\pi}{2})^2$.        

\begin{lemma}\label{lemma4}
    We have that $\mathbb{R}\times \{-\infty\}\cong \Delta_1, \{-\infty\}\times \mathbb{R}\cong \Delta_2$, where $\Delta_1=\{(x,0)~|~0<x<2\phi\}, \Delta_2=\{(0,y)~|~0<y<2\phi\}$.  
\end{lemma}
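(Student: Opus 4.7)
The plan is to exhibit the homeomorphism $\mathbb{R}\times\{-\infty\}\to\Delta_1$ explicitly as the map $(K_1,-\infty)\mapsto (L_1(K_1),0)$, where $L_1$ is viewed as a smooth function of $K_1$ on the stratum $\{k_2=0\}$, and then to show that $K_1\mapsto L_1$ is a smooth strictly monotone bijection from $\mathbb{R}$ onto the open interval $(0,2\phi)$. The argument for $\{-\infty\}\times\mathbb{R}\cong\Delta_2$ is then identical by interchanging the roles of the two disks.

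First I would observe that on $\mathbb{R}\times\{-\infty\}$ we have $k_2=0$, i.e.\ $r_2=\pi/2$, and hence $L_2=\ell_2 k_2=0$. The Gauss--Bonnet computation already invoked in the proof of Lemma \ref{lemma3} (applied to the bigon $B$, whose two interior angles are both equal to $\phi$) gives
\[
\text{Area}(B)=2\phi-L_1-L_2=2\phi-L_1,
\]
so that $L_1=2\phi-\text{Area}(B)\in(0,2\phi)$. Under the obvious identification $\Delta_1\cong(0,2\phi)$, this confirms that the map lands in $\Delta_1$. Lemma \ref{lemma3} then supplies the inequality $\partial L_1/\partial K_1>0$ on this stratum, so the map is smooth and strictly increasing; in particular it is injective, and the image is an open subinterval $(a,b)\subseteq(0,2\phi)$.

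Next I would identify the two endpoints of the image by computing limits at infinity. As $K_1\to-\infty$ one has $r_1\to\pi/2$, so both $D_1$ and $D_2$ become hemispheres meeting at angle $\phi$; their intersection $B$ degenerates to a spherical lune of angle $\phi$, which has area exactly $2\phi$. Via the identity $L_1=2\phi-\text{Area}(B)$ this forces $L_1\to 0$, so $a=0$. As $K_1\to+\infty$ one has $r_1\to 0$; the spherical law of cosines
\[
\cos r_3=-\sin r_1\cos\phi
\]
shows that $r_3\to\pi/2$, so the center of $D_2$ migrates to the boundary of the shrinking disk $D_1$, but in any case $B\subseteq D_1$ so $\text{Area}(B)\le\text{Area}(D_1)\to 0$, giving $L_1\to 2\phi$ and hence $b=2\phi$. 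Combined with the continuity and strict monotonicity already established, the intermediate value theorem yields $L_1(\mathbb{R})=(0,2\phi)$, so the map is a homeomorphism onto $\Delta_1$.

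The main delicate point is the limit $K_1\to+\infty$: one must ensure that the bigon $B$ persists as a nonempty open region throughout the degeneration, so that the Gauss--Bonnet identity remains applicable right up to the limit. This is where the constraint $\phi\in(0,\pi/2)$ is essential, since the formula $\cos r_3=-\sin r_1\cos\phi$ keeps $r_3$ strictly between $|r_2-r_1|=\pi/2-r_1$ and $r_1+r_2=\pi/2+r_1$ for all $r_1\in(0,\pi/2)$, so the two disks continue to cross transversally (neither contains the other) and $B$ is a genuine bigon throughout. Once this is verified, the containment $B\subset D_1$ delivers the area collapse, and the proof concludes.
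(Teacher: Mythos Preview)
Your proof is correct and follows essentially the same route as the paper: exhibit $K_1\mapsto L_1(K_1)$, invoke Lemma~\ref{lemma3} for strict monotonicity, use Gauss--Bonnet to confine the image to $(0,2\phi)$, and then compute the two endpoint limits. The only cosmetic differences are in those limit computations: for $K_1\to-\infty$ the paper bounds $L_1=\ell_1 k_1\le\pi e^{K_1}\to 0$ directly rather than passing through the area of the limiting lune, and for $K_1\to+\infty$ your containment argument $B\subset D_1$ is actually a cleaner justification of $\text{Area}(B)\to 0$ than the paper gives.
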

    \begin{proof}
        On $\mathbb{R}\times \{-\infty\}$, by Lemma \ref{lemma3}, it is easy to know that $\Lambda^{'}_{\phi}(K_1)$ is a embedding map. For the map $\Lambda^{'}_{\phi}(K_1)$, we have 
        $$
\begin{aligned}
\Lambda^{'}_{\phi}(K_1): \mathbb{R} & \longrightarrow \mathbb{R}_{>0} \\
K_1 & \longmapsto L_1(K_1).
\end{aligned}
$$
On $\mathbb{R}\times \{-\infty\}$, $L_2=0$. By Gauss-Bonnet formula, we have that $\text{Area}(B)=2\phi-L_1>0$, i.e. $0<L_1<2\phi$. Hence the image set of $\Lambda^{'}_{\phi}(K_1)$ is contained in $\tilde{\Delta}_1$, where $\tilde{\Delta}_1=\{x~|~0<x<2\phi\}$. Then we will show that $0$ and $2\phi$ are the limit points of the map $\Lambda^{'}_{\phi}(K_1)$. 

We can choose a sequence $\{n\}_{n\in \mathbb{N}_+}$, then $k_1(n)=e^n\to +\infty~(n\to +\infty), r_1(n)=\arccot k_1(n)\to 0 ~(n\to +\infty)$. Hence we have that $\text{Area}(B(n))\to 0~(n\to +\infty)$. On $\mathbb{R}\times \{-\infty\}$, $L_2(n)=0$. By Gauss-Bonnet formula, we have that $\text{Area}(B(n))=2\phi-L_1(n)\to 0~(n\to +\infty)$, i.e. $L_1(n)\to 2\phi~(n\to +\infty)$.    

We choose another sequence $\{-n\}_{n\in \mathbb{N}_+}$, then $k_1(-n)=e^{-n}$. Since $\ell_1(-n)\le \pi$, then $0\le L_1(-n)=\ell_1(-n)k_1(-n)\le \pi e^{-n}\to 0~(n\to +\infty)$. Hence we have that $L_1(-n)\to 0~(n\to +\infty)$.      

Since $0$ and $2\phi$ are the limit points of the map $\Lambda^{'}_{\phi}(K_1)$ and $\Lambda^{'}_{\phi}(K_1)$ is a embedding map, it is easy to know that $\Lambda^{'}_{\phi}(K_1)$ is homeomorphism from $\mathbb{R}$ to $\tilde{\Delta}_1$, i.e. $\mathbb{R}\times \{-\infty\}\cong \Delta_1$.

Using the above same argument, we know that $\{-\infty\}\times \mathbb{R}\cong \Delta_2$. 
\end{proof}
By Lemma \ref{lemma4}, we have that $\mathcal{B}^1_{\phi}\cong (0, \frac{\pi}{2})\times \{\frac{\pi}{2}\}\cong \mathbb{R}\times \{-\infty\}\cong \Delta_1$ and $\mathcal{B}^2_{\phi}\cong  \{\frac{\pi}{2}\}\times (0, \frac{\pi}{2})\cong \{-\infty\}\times \mathbb{R}\cong \Delta_2$. By Lemma 11 in \cite{nie}, $\mathcal{B}^3_{\phi}\cong (0,\frac{\pi}{2})^2\cong \mathbb{R}^2\cong\Delta_3$, where $\Delta_3=\{(x,y)\in\mathbb{R}^2_{>0}~|~x+y<2\phi\}$. Besides, we have $\mathcal{B}^0_{\phi}\cong \{(\frac{\pi}{2},\frac{\pi}{2})\}\cong\{(0,0)\}$. 

\section{Existence and rigidity of circle pattern metrics for prescribed total geodesic curvatures}\label{a4} 

Given a closed topological surface $S$ and a cellular decomposition $\Sigma=(V,E,F)$ of $S$. For simplicity, we can suppose that $F=\{1,\cdots,|F|\}$. By $L_{f,e}$ we denote the total geodesic curvature of an arc $C_{f,e}$, where the arc $C_{f,e}$ is the side of the bigon corresponding to 2-cell $f\in F$ and 1-cell $e\in E$. By $B_e$ we denote the bigon corresponding to $e$ (see Figure \ref{fig4}).   
\begin{figure}[htbp]
\centering
\includegraphics[scale=0.8]{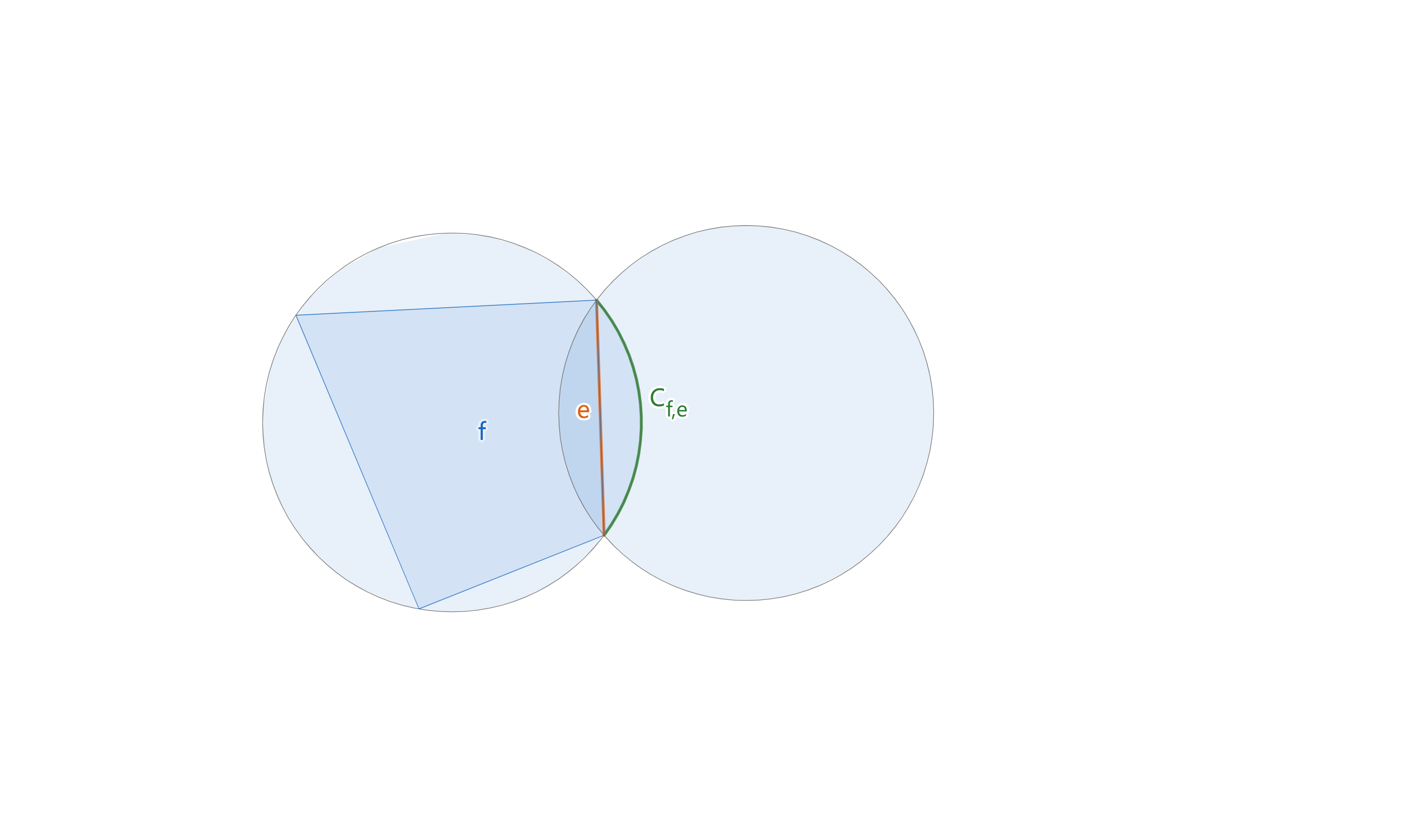}
\captionof{figure}{\small The position of $f,e,B_e$ and $C_{f,e}$}
\label{fig4}
\end{figure} 

Then we can define the total geodesic curvature $L_f$ of a 2-cell $f$, i.e. $L_f:=\sum_{e\in E_f}L_{f,e}$, where $E_f$ is the set of 1-cells contained in $\partial f$.

Given the weight $\Phi\in (0,\frac{\pi}{2})^{|E|}$, for $\mathbf{r}=(r_1,\cdots,r_{|F|})\in (0,\frac{\pi}{2})^{|F|}$, using change of variables $K_i=\ln \cot r_i$, $i=1,\cdots,|F|$, we consider the new variable $K=(K_1,\cdots,K_{|F|})\in \mathbb{R}^{|F|}$.

We can define a potential function 
$$
\Lambda(K):=\sum_{e\in E}\Lambda_{\Phi(e)}(K_{f_1(e)},K_{f_2(e)})
$$
on $\mathbb{R}^{|F|}$, where $f_1(e)$ and $f_2(e)$ are the 2-cells on two sides of 1-cell $e$. Besides, we have 
\begin{align}\label{formula1}
\frac{\partial \Lambda(K)}{\partial K_i}=\sum_{e\in E_i}\frac{\partial \Lambda_{\Phi(e)}(K_{f_1(e)},K_{f_2(e)})}{\partial K_i}=\sum_{e\in E_i}L_{i,e}=L_i>0,~1\le i\le |F|.
\end{align}
Hence the Hessian of $\Lambda$ is a  Jacobi matrix, i.e.
\[\text{Hess}~\Lambda=M=\begin{pmatrix}
 	 		\frac{\partial L_1}{\partial K_1}&\cdots&\frac{\partial L_{1}}{\partial K_{\vert F\vert}}\\
 	 		\vdots&\ddots&\vdots\\
 	 		\frac{\partial L_{\vert F\vert}}{\partial K_{1}}&\cdots&\frac{\partial L_{\vert F\vert}}{\partial K_{\vert F\vert}}\\
\end{pmatrix}.\]

\begin{proposition}\label{pro2}
    The Jacobi matrix $M$ is positive definite.
\end{proposition}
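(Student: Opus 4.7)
The plan is to decompose the Hessian $M$ as a sum of contributions from each edge $e \in E$ and then invoke the $2 \times 2$ positive-definiteness already established in Lemma \ref{lemma3}. Concretely, since
$$\Lambda(K) = \sum_{e \in E} \Lambda_{\Phi(e)}(K_{f_1(e)}, K_{f_2(e)}),$$
the Hessian splits as $M = \sum_{e \in E} M_e$, where $M_e \in \mathbb{R}^{|F| \times |F|}$ is obtained by inserting the $2 \times 2$ Hessian of $\Lambda_{\Phi(e)}$ into the rows and columns indexed by $f_1(e)$ and $f_2(e)$, and filling the rest with zeros.

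Given any $x = (x_1,\dots,x_{|F|})^T \in \mathbb{R}^{|F|}$, I would compute
$$x^T M x = \sum_{e \in E} x^T M_e x = \sum_{e \in E} \bigl(x_{f_1(e)},\, x_{f_2(e)}\bigr) \begin{pmatrix} \partial L_1/\partial K_1 & \partial L_1/\partial K_2 \\ \partial L_2/\partial K_1 & \partial L_2/\partial K_2 \end{pmatrix}_e \begin{pmatrix} x_{f_1(e)} \\ x_{f_2(e)} \end{pmatrix}.$$
By Lemma \ref{lemma3}, each $2 \times 2$ block is positive definite on $\mathbb{R}^2$, so each summand is nonnegative, which immediately gives $x^T M x \ge 0$, i.e.\ $M$ is positive semi-definite.

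For strict positive definiteness, suppose $x^T M x = 0$. Then every summand must vanish, and the positive definiteness of each $2 \times 2$ block forces $x_{f_1(e)} = x_{f_2(e)} = 0$ for every $e \in E$. Since $\Sigma = (V,E,F)$ is a cellular decomposition of the closed surface $S$, every $2$-cell $f \in F$ has at least one $1$-cell on its boundary, so each index $i \in F$ coincides with $f_1(e)$ or $f_2(e)$ for some $e \in E$. Therefore $x_i = 0$ for all $i$, and $M$ is positive definite on all of $\mathbb{R}^{|F|}$.

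There is no real obstacle in this argument: the analytical content is entirely contained in Lemma \ref{lemma3}, and the remainder is a bookkeeping argument of the standard combinatorial-Ricci-flow type. The only mildly delicate point is the last step, namely checking that every face index is actually hit by some edge of the decomposition; this is automatic from the definition of a cellular decomposition of a closed surface, and I would simply remark on it rather than elaborate.
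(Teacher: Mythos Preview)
Your proof is correct, but it follows a genuinely different route from the paper's. The paper argues via strict diagonal dominance: it uses Lemma~9 of \cite{nie} to show that the diagonal entries $\partial L_i/\partial K_i$ are positive and the off-diagonal entries $\partial L_i/\partial K_j$ are nonpositive, and then computes
\[
\left|\frac{\partial L_i}{\partial K_i}\right| - \sum_{j\neq i}\left|\frac{\partial L_j}{\partial K_i}\right|
= \frac{\partial}{\partial K_i}\sum_{f\in F} L_f
= -\sum_{e\in E}\frac{\partial\,\mathrm{Area}(B_e)}{\partial K_i} > 0,
\]
invoking Gauss--Bonnet and the area monotonicity from \cite{nie}. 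Your approach instead decomposes $M=\sum_{e\in E} M_e$ into edge contributions and appeals directly to the $2\times 2$ positive definiteness in Lemma~\ref{lemma3}; strictness then comes from the combinatorial fact that every face meets some edge. Your argument is shorter and more conceptual, needing only Lemma~\ref{lemma3} and no separate sign or area analysis; the paper's argument, on the other hand, yields the extra structural information that $M$ is a strictly diagonally dominant $M$-matrix, and its template is reused verbatim in the degenerate case (Proposition~\ref{pro3}). One small point worth making explicit in your write-up: if an edge $e$ has the same face on both sides (a ``self-intersection'' bigon, which the paper's setup allows), the block $M_e$ collapses to a $1\times 1$ entry, but your quadratic-form identity $x^T M_e x = (x_{f_1(e)},x_{f_2(e)})H_e(x_{f_1(e)},x_{f_2(e)})^T$ still holds and remains strictly positive unless $x_{f_1(e)}=0$.
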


\begin{proof}
    By Lemma 9 in \cite{nie}, we have that
    $$
    \frac{\partial L_i}{\partial K_i}=\frac{\partial (\sum_{e\in E_i}L_{i,e})}{\partial K_i}=\sum_{e\in E_i}\frac{\partial L_{i,e}}{\partial K_i}>0, ~i=1,\cdots,|F|,
    $$
and
$$
    \frac{\partial L_i}{\partial K_j}=\frac{\partial (\sum_{e\in E_i}L_{i,e})}{\partial K_j}=\sum_{e\in E_i}\frac{\partial L_{i,e}}{\partial K_j}\le0, ~1\le i\ne j\le|F|.
    $$
Hence we obtain
$$
\left|\frac{\partial L_i}{\partial K_i}\right|-\sum_{j \neq i}\left|\frac{\partial L_j}{\partial K_i}\right|=\frac{\partial L_i}{\partial K_i}+\sum_{j \neq i} \frac{\partial L_j}{\partial K_i}=\frac{\partial (\sum_{f\in F}L_f)}{\partial K_i}.
$$
Besides, we have 
$$\sum_{f\in F}L_f=\sum_{f\in F}\sum_{e\in E_f}L_{f,e}=\sum_{e\in E}\sum_{f\in F_\{e\}}L_{f,e},$$ where $F_{\{e\}}$ is the set of 2-cells on two sides of 1-cell $e$. Then by Gauss-Bonnet formula, we have  
$$\sum_{f\in F_\{e\}}L_{f,e}=2\Phi(e)-\text{Area}(B_e).$$
Hence we obtain
$$
\sum_{f\in F}L_f=\sum_{e\in E}2\Phi(e)-\text{Area}(B_e).
$$
By Lemma 9 in \cite{nie}, we have
$$
\frac{\partial (\sum_{f\in F}L_f)}{\partial K_i}=-\sum_{e\in E}\frac{\partial (\text{Area}(B_e))}{\partial K_i}>0,
$$
Then we obtain 
$$
\left|\frac{\partial L_i}{\partial K_i}\right|-\sum_{j \neq i}\left|\frac{\partial L_j}{\partial K_i}\right|=\frac{\partial (\sum_{f\in F}L_f)}{\partial K_i}>0.
$$
Hence $M$ is a strictly diagonally dominant matrix with positive diagonal entries, i.e. $M$ is positive definite. 
\end{proof}

\begin{corollary}\label{coro1}
    The potential function $\Lambda$ is strictly convex on $\mathbb{R}^{|F|}$. 
\end{corollary}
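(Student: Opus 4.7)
The plan is to read off strict convexity directly from Proposition \ref{pro2}. By construction, $\Lambda$ is a finite sum of the per-bigon potentials $\Lambda_{\Phi(e)}$, each of which is $C^2$ on $\mathbb{R}^2$ by the closedness of $\omega_\phi$ in Lemma \ref{lemma2} together with the smoothness of the length functions $\ell_i(k_1,k_2)$ noted in that discussion. Hence $\Lambda$ is $C^2$ on the open convex domain $\mathbb{R}^{|F|}$. Identity (\ref{formula1}) identifies its gradient as $\nabla\Lambda(K)=(L_1,\dots,L_{|F|})^T$, so the Hessian $\mathrm{Hess}\,\Lambda$ is exactly the Jacobi matrix $M$ whose entries $\partial L_i/\partial K_j$ are displayed right before Proposition \ref{pro2}.

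The core input is therefore Proposition \ref{pro2}, which asserts that $M$ is positive definite at every point $K\in\mathbb{R}^{|F|}$. Combined with the standard second-order criterion from convex analysis—any $C^2$ function on an open convex subset of $\mathbb{R}^n$ whose Hessian is positive definite everywhere is strictly convex—this yields the corollary immediately. If one prefers to avoid citing the criterion, one can restrict $\Lambda$ to an arbitrary line $t\mapsto (1-t)K+tK'$ with $K\neq K'$ and observe that its second derivative equals $\langle M((1-t)K+tK')(K'-K),K'-K\rangle$, which is strictly positive on $[0,1]$ by Proposition \ref{pro2}; strict convexity on every line segment is equivalent to strict convexity of $\Lambda$.

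There is no real obstacle here: the corollary is a one-line consequence of Proposition \ref{pro2}, and the only thing to be careful about is confirming that the Hessian identification uses the correct chain of equalities, namely that the gradient of $\Lambda$ computed via (\ref{formula1}) is differentiated once more componentwise to produce exactly the matrix $M$ of Proposition \ref{pro2}—a routine check.
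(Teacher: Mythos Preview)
Your proposal is correct and matches the paper's own proof, which is simply the one-line observation that by Proposition \ref{pro2} the Hessian of $\Lambda$ is positive definite, hence $\Lambda$ is strictly convex on $\mathbb{R}^{|F|}$. You have merely spelled out in more detail the standard second-order criterion (and an equivalent line-restriction argument), but the approach is identical.
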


\begin{proof}
    By Proposition \ref{pro2}, $\text{Hess}~\Lambda$ is positive definite, then $\Lambda$ is strictly convex on $\mathbb{R}^{|F|}$. 
\end{proof}

For a subset $F'\subset F$, by $E_{F'}$ we denote the set
$$
E_{F'}=\{e\in E~|~\exists f\in F', s.t. ~e\in E_f\}.
$$

Then we have the following theorem, i.e.

\begin{theorem}\label{thm1}
    $\nabla\Lambda$ is a homeomorphism from $\mathbb{R}^{|F|}$ to $\mathcal{L}_1$, where 
    $$
    \mathcal{L}_1=\{(L_1,\cdots,L_{|F|})^T\in \mathbb{R}^{|F|}_{>0}~|~\sum_{f\in F'}L_f<2\sum_{e\in E_{F'}}\Phi(e), \forall F'\subset F\}.
    $$
\end{theorem}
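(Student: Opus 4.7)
The plan is to establish four properties of $\nabla\Lambda$: injectivity, containment of the image in $\mathcal{L}_1$, local diffeomorphism, and properness when viewed as a map into $\mathcal{L}_1$. Injectivity is immediate from the strict convexity of $\Lambda$ on $\mathbb{R}^{|F|}$ recorded in Corollary \ref{coro1}. The local diffeomorphism property follows from Proposition \ref{pro2}, since the differential of $\nabla\Lambda$ is the positive definite Hessian matrix $M$. For the containment $\nabla\Lambda(\mathbb{R}^{|F|}) \subset \mathcal{L}_1$, I fix an arbitrary $F' \subset F$ and apply the Gauss--Bonnet identity $L_{f_1(e),e} + L_{f_2(e),e} = 2\Phi(e) - \text{Area}(B_e)$ to each bigon. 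Every $e \in E_{F'}$ contributes either both summands (when both adjacent faces lie in $F'$) or a single one (when only one does); in the latter case the retained summand equals $2\Phi(e) - \text{Area}(B_e) - L_{f'(e),e}$, which is strictly less than $2\Phi(e)$ because the omitted curvature and the area are positive. Summation over $e \in E_{F'}$ then yields the defining inequality of $\mathcal{L}_1$.

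The main obstacle is the properness step, which will upgrade the open image (via the local diffeomorphism property) to the whole connected set $\mathcal{L}_1$. Suppose $\{K^{(n)}\}\subset\mathbb{R}^{|F|}$ has no convergent subsequence, so after extraction $|K^{(n)}|\to\infty$ and $F$ splits into disjoint subsets $I_+, I_0, I_-$ where $K_i^{(n)} \to +\infty$, stays bounded, or $K_i^{(n)} \to -\infty$ respectively. If $I_- \neq \emptyset$, then for $i \in I_-$ I have $k_i^{(n)} \to 0$ with each arc length $\ell_{i,e}$ bounded by $2\pi$, so $L_i^{(n)} \to 0$ and the limit escapes the open set $\mathcal{L}_1 \subset \mathbb{R}^{|F|}_{>0}$. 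If $I_- = \emptyset$ then $I_+ \neq \emptyset$ and I take $F' = I_+$; for each $i \in I_+$ the radius $r_i \to 0$, so every bigon with an endpoint face in $I_+$ has $\text{Area}(B_e) \to 0$. For an edge $e \in E_{F'}$ with one endpoint face $f_2 \notin I_+$, the bigon $B_e$ is contained in the shrinking disk $D_{f_1}$, forcing the two corners of the bigon to collapse, so the chord (and hence the arc) on $\partial D_{f_2}$ between them shrinks to zero; with $k_{f_2}$ bounded this gives $L_{f_2,e} \to 0$, and Gauss--Bonnet then yields $L_{f_1,e} \to 2\Phi(e)$. The remaining case (both endpoints in $F'$) gives the contribution $2\Phi(e) - \text{Area}(B_e) \to 2\Phi(e)$ directly. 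Summing, $\sum_{f \in F'} L_f^{(n)} \to 2\sum_{e \in E_{F'}} \Phi(e)$, so $\nabla\Lambda(K^{(n)})$ approaches $\partial\mathcal{L}_1$.

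Assembling the pieces, the image $\nabla\Lambda(\mathbb{R}^{|F|})$ is open in $\mathcal{L}_1$ by the local diffeomorphism property and closed in $\mathcal{L}_1$ by properness; since $\mathcal{L}_1$ is convex and hence connected, the image equals $\mathcal{L}_1$. Injectivity then makes $\nabla\Lambda$ a continuous bijection, and properness gives continuity of the inverse. The step I expect to be most delicate is the case analysis in the properness argument, specifically handling the ``mixed'' edges of $E_{F'}$ with exactly one endpoint face in $I_+$, where one needs the geometric observation that the shrinking of $D_{f_1}$ forces the arc on $\partial D_{f_2}$ to shrink as well so that $L_{f_2,e} \to 0$ and only then does the Gauss--Bonnet identity deliver the boundary value $2\Phi(e)$ for the remaining term.
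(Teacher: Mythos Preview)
Your proposal is correct and follows essentially the same route as the paper: the containment argument via Gauss--Bonnet, the embedding from strict convexity and positive definite Hessian, and the boundary/properness analysis splitting into the $I_-$ case (some $L_i\to 0$) and the $I_+$ case (the defining inequality for $F'=I_+$ becomes an equality in the limit, with the mixed-edge step $\ell_{f_2,e}\to 0$ handled by the shrinking-disk observation) all match the paper's proof. The only cosmetic difference is that the paper invokes Brouwer's Invariance of Domain to conclude, whereas you phrase the same content as ``proper local diffeomorphism onto a connected target''; your framing is arguably cleaner since the local diffeomorphism already gives openness without Invariance of Domain.
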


\begin{proof}
By (\ref{formula1}), we have 
$$
\begin{aligned}
\nabla\Lambda: \mathbb{R}^{|F|} & \longrightarrow \mathbb{R}^{|F|}_{>0}  \\
K=(K_1,\cdots,K_{|F|})^T & \mapsto L=(L_1,\cdots,L_{|F|})^T.
\end{aligned}
$$
For any subset $F'\subset F$, we obtain
$$
\sum_{f\in F'}L_f=\sum_{f\in F'}\sum_{e\in E_f}L_{f,e}=\sum_{e\in E_{F'}}\sum_{f\in F_{\{e\}}\cap F'}L_{f,e}.
$$
Then by Gauss-Bonnet formula, we have
$$
\text{Area}(B_e)=2\Phi(e)-\sum_{f\in F_{\{e\}}}L_{f,e}>0,
$$
i.e. $\sum_{f\in F_{\{e\}}}L_{f,e}<2\Phi(e)$. Hence we obtain
$$
\sum_{f\in F'}L_f=\sum_{e\in E_{F'}}\sum_{f\in F_{\{e\}}\cap F'}L_{f,e}\le\sum_{e\in E_{F'}}\sum_{f\in F_{\{e\}}}L_{f,e}<2\sum_{e\in E_{F'}}\Phi(e),
$$
hence $(L_1,\cdots,L_{|F|})^T\in\mathcal{L}_1$, $\nabla\Lambda$ is a map from $\mathbb{R}^{|F|}$ to $\mathcal{L}_1$. 

By Proposition \ref{pro2} and Corollary \ref{coro1}, $\nabla\Lambda$ is a embedding map. Hence we only need to show that the image set of $\nabla\Lambda$ is $\mathcal{L}_1$. Now we need to analysis the boundary of its image.     

Choose a point $a=(a_1,\cdots,a_{|F|})^T\in \partial\mathbb{R}^{|F|}$, we define two subset $W_1, W_2\subset F$, i.e.
$$
W_1=\{i\in F~|~a_i=+\infty\},~W_2=\{i\in F~|~a_i=-\infty\},
$$
we know that $W_1\ne\emptyset$ or $W_2\ne\emptyset$.

We choose a sequence $\{a^n\}_{n=1}^{+\infty}\subset \mathbb{R}^{|F|}$ such that $\lim_{n\to +\infty}a^n=a$, where $a^n=(a_1^n,\cdots,a_{|F|}^n)^T$. Then we have
$$
\nabla\Lambda(a^n)=L(a^n)\overset{\Delta}{=}L^n,~k_i^n\overset{\Delta}{=}\exp(a_i^n),~\forall n\ge 1.
$$
Now we need to show that $\{L^n\}_{n=1}^{+\infty}$ converges to the boundary of $\mathcal{L}_1$. 

If $W_1\ne\emptyset$, we know that for each $i\in W_1$, $k_i^n=\exp(a_i^n)\to +\infty ~(n\to +\infty)$, and $r_i^n=\arccot k_i^n\to 0 ~(n\to +\infty)$. Besides, note that
\begin{align}\label{for4}
\lim_{n\to +\infty}\sum_{f\in W_1}L_f^n=\lim_{n\to +\infty}\sum_{f\in W_1}\sum_{e\in E_f}L_{f,e}^n=\lim_{n\to +\infty}\sum_{e\in E_{W_1}}\sum_{f\in W_1\cap F_{\{e\}}}L_{f,e}^n.
\end{align}
Now we need to show
\begin{align}\label{for5}
\lim_{n\to +\infty}\sum_{f\in W_1\cap F_{\{e\}}}L_{f,e}^n=2\Phi(e).
\end{align}
Since $W_1\ne\emptyset$ and $e\in E_{W_1}$, the set $W_1\cap F_{\{e\}}$ has 1 or 2 elements.  

If $W_1\cap F_{\{e\}}$ has 1 elements, we can suppose that $W_1\cap F_{\{e\}}=\{h\}$ and $F_{\{e\}}=\{h,j\}$. By Gauss-Bonnet formula, we have 
\begin{align}\label{for2}
L_{h,e}^n+L_{j,e}^n=2\Phi(e)-\text{Area}(B_e^n).
\end{align}
Since $j\notin W_1$, we know that $a_j^n, k_j^n\nrightarrow +\infty, r_j^n\nrightarrow 0 ~(n\to +\infty)$. Since $h\in W_1$, we know that $r_h^n\to 0~(n\to +\infty)$. Then we have that $\ell_{j,e}^n, \text{Area}(B_e^n)\to 0~(n\to +\infty)$. Hence we obtain $L_{j,e}^n=\ell_{j,e}^nk_j^n\to 0~(n\to +\infty)$.

Using (\ref{for2}), we have
$$
\lim_{n\to +\infty}L_{h,e}^n=2\Phi(e)-\lim_{n\to +\infty}\text{Area}(B_e^n)=2\Phi(e),
$$
i.e. $\lim_{n\to +\infty}\sum_{f\in W_1\cap F_{\{e\}}}L_{f,e}^n=2\Phi(e)$.

If $W_1\cap F_{\{e\}}$ has 2 elements, we can suppose that $W_1\cap F_{\{e\}}=\{m,s\}$, then $F_{\{e\}}=\{m,s\}$. By Gauss-Bonnet formula, we have 
\begin{align}\label{for3}
L_{m,e}^n+L_{s,e}^n=2\Phi(e)-\text{Area}(B_e^n).
\end{align}
Since $m,s\in W_1$, we know that $k_m^n, k_s^n\to +\infty~(n\to +\infty)$, $r_m^n, r_s^n\to 0~(n\to +\infty)$. Then we have $\text{Area}(B_e^n)\to 0~(n\to +\infty)$. Using (\ref{for3}), we obtain 
$$
\lim_{n\to +\infty}L_{m,e}^n+L_{s,e}^n=2\Phi(e)-\lim_{n\to +\infty}\text{Area}(B_e^n)=2\Phi(e),
$$
i.e. $\lim_{n\to +\infty}\sum_{f\in W_1\cap F_{\{e\}}}L_{f,e}^n=2\Phi(e)$. 

Hence if $W_1\ne\emptyset$, by (\ref{for4}) and (\ref{for5}), we have
$$
\lim_{n\to +\infty}\sum_{f\in W_1}L_f^n=\lim_{n\to +\infty}\sum_{e\in E_{W_1}}\sum_{f\in W_1\cap F_{\{e\}}}L_{f,e}^n=\sum_{e\in E_{W_1}}\lim_{n\to +\infty}\sum_{f\in W_1\cap F_{\{e\}}}L_{f,e}^n=2\sum_{e\in E_{W_1}}\Phi(e),
$$
i.e. $\{L^n\}_{n=1}^{+\infty}$ converges to the boundary of $\mathcal{L}_1$.

If $W_2\ne\emptyset$, then for each $i\in W_2$, $k_i^n=\exp(a_i^n)\to 0~(n\to +\infty)$. For any 1-cell $e\in E_i$, since $0\le \ell_{i,e}^n\le 2\pi$ and $k_i^n\to 0~(n\to +\infty)$, then $\lim_{n\to +\infty}L_{i,e}^n=\lim_{n\to +\infty}\ell_{i,e}^nk_i^n=0$. Hence we obtain 
$$
\lim_{n\to +\infty}L_i^n=\lim_{n\to +\infty}\sum_{e\in E_i}L_{i,e}^n=0,
$$
i.e. $\{L^n\}_{n=1}^{+\infty}$ converges to the boundary of $\mathcal{L}_1$. 

By Brouwer's Theorem on the Invariance of Domain and the above analysis, we know that the image set of $\nabla\Lambda$ is $\mathcal{L}_1$. Hence $\nabla\Lambda$ is a homeomorphism from $\mathbb{R}^{|F|}$ to $\mathcal{L}_1$. 
\end{proof}
We can construct a map $\varsigma$ from $\mathbb{R}^{|F|}_{>0}$ to $\mathbb{R}^{|F|}$, i.e.
$$
\begin{array}{cccc}
\varsigma: \mathbb{R}^{|F|}_{>0} &\longrightarrow &\mathbb{R}^{|F|}\\
k=(k_1,\cdots,k_{|F|})^T&\longmapsto & K=(\ln k_1, \cdots, \ln k_{|F|})^T,\\
\end{array}
$$
the map $\varsigma$ is a homeomorphism. By Theorem \ref{thm1}, we know that the map $\mathcal{E}_1:=\nabla\Lambda\circ\varsigma$, i.e.
$$
\begin{array}{cccc}
\mathcal{E}_1: \mathbb{R}^{|F|}_{>0} &\longrightarrow &\mathcal{L}_1\\
k&\longmapsto & \mathcal{E}_1(k)=\nabla\Lambda(K)=L(K)\\
\end{array}
$$
is a homeomorphism from $\mathbb{R}^{|F|}_{>0}$ to $\mathcal{L}_1$. Then we completed the proof of Theorem \ref{thm10}. 

\section{Existence and rigidity of degenerated circle pattern metrics for prescribed total geodesic curvatures}\label{a5}

By $\mathbb{R}^{|F|}_{i_1\cdots i_m}$ we denote the set
$$
\mathbb{R}^{|F|}_{i_1\cdots i_m}=\{(0,\cdots,k_{i_1},0,\cdots,k_{i_m},\cdots,0)\in\mathbb{R}^{|F|}~|~k_{i_1},\cdots,k_{i_m}>0\}.
$$
By $\partial_i\mathbb{R}^{|F|}_{\ge 0}$ we denote the set
$$
\partial_i\mathbb{R}^{|F|}_{\ge 0}=\{(k_1,\cdots,k_{|F|})\in\mathbb{R}^{|F|}~|~i~\text{components}>0,~ |F|-i ~\text{components}=0\}.
$$
Then we have
$$
\partial_1\mathbb{R}^{|F|}_{\ge 0}=\bigcup_{1\le i\le |F|}\{(0,\cdots,k_i,\cdots,0)\in\mathbb{R}^{|F|}~|~k_i>0\}=\bigcup_{1\le i\le |F|}\mathbb{R}^{|F|}_i,
$$
$$
\partial_2\mathbb{R}^{|F|}_{\ge 0}=\bigcup_{1\le i<j\le |F|}\{(0,\cdots,k_i,\cdots,k_j,\cdots,0)\in\mathbb{R}^{|F|}~|~k_i,k_j>0\}=\bigcup_{1\le i<j\le |F|}\mathbb{R}^{|F|}_{ij},
$$
$$\cdots$$
$$
\partial_{|F|-1}\mathbb{R}^{|F|}_{\ge 0}=\bigcup_{1\le i_1<\cdots<i_{|F|-1}\le |F|}\{(k_i,\cdots,0,\cdots,k_{i_{|F|-1}})\in\mathbb{R}^{|F|}~|~k_{i_1},\cdots,k_{i_{|F|-1}}>0\}=\bigcup_{1\le i_1<\cdots<i_{|F|-1}\le |F|}\mathbb{R}^{|F|}_{i_1\cdots i_{|F|-1}}.
$$

For $1\le m\le |F|-1$, we have that 
$$\partial_m \mathbb{R}^{|F|}_{\ge 0}=\bigcup_{1\le i_1<\cdots<i_m\le |F|}\mathbb{R}^{|F|}_{i_1\cdots i_m}.$$

By $\bar{\partial}\mathbb{R}^{|F|}_{\ge 0}$ we denote the set 
$$
\bar{\partial}\mathbb{R}^{|F|}_{\ge 0}=\bigcup_{m=1}^{|F|-1}\partial_m \mathbb{R}^{|F|}_{\ge 0}\cup\{0\}.
$$

Then we consider the set $\mathbb{R}^{|F|}_{1\cdots m}$, i.e.
$$\mathbb{R}^{|F|}_{1\cdots m}=\{(k_1,\cdots,k_m,0,\cdots,0)\in\mathbb{R}^{|F|}~|~k_1,\cdots,k_m>0\}.$$ 
Given the weight $\Phi\in (0,\frac{\pi}{2})^{|E|}$, for $\mathbf{r}=(r_1,r_2,\cdots,r_m,\frac{\pi}{2},\cdots,\frac{\pi}{2})\in (0,\frac{\pi}{2})^{m}\times\{\frac{\pi}{2}\}^{|F|-m}$, using change of variables $K_i=\ln \cot r_i$, $i=1,\cdots,m$, we consider the new variable $K=(K_1,\cdots,K_{m})\in \mathbb{R}^{m}$.

We can define a potential function
$$
\Lambda_1(K):=\sum_{e\in E,f_1(e),f_2(e)\in F_m}\Lambda_{\Phi(e)}(K_{f_1(e)},K_{f_2(e)})+\sum_{e\in E,f_1(e)\in F_m,f_2(e)\notin F_m}\Lambda_{\Phi(e)}(K_{f_1(e)})
$$
on $\mathbb{R}^{m}$, where $F_m=\{1,2,\cdots,m\}$, $f_1(e)$ and $f_2(e)$ are the 2-cells on two sides of 1-cell $e$. Besides, note that at least one of these two sums exists.

For $1\le i\le m$, we have
\begin{align*}
\frac{\partial \Lambda_1(K)}{\partial K_i}&=\sum_{e\in E_i,f_1(e),f_2(e)\in F_m}\frac{\partial \Lambda_{\Phi(e)}(K_{f_1(e)},K_{f_2(e)})}{\partial K_i}+\sum_{e\in E_i,f_1(e)\in F_m,f_2(e)\notin F_m}\frac{\partial \Lambda_{\Phi(e)}(K_{f_1(e)})}{\partial K_i}\\
&=\sum_{e\in E_i,f_1(e),f_2(e)\in F_m}L_{i,e}(K_{f_1(e)},K_{f_2(e)})+\sum_{e\in E_i,f_1(e)\in F_m,f_2(e)\notin F_m}L_{i,e}(K_i).
\end{align*}
Since $i\in F_m$, then for any 1-cell $e\in E_i$, we know that $f_1(e), f_2(e)\in F_m$ or $f_1(e)\in F_m, f_2(e)\notin F_m$. Hence we have
\begin{align}\label{for6}
L_i=\sum_{e\in E_i}L_{i,e}=\sum_{e\in E_i,f_1(e), f_2(e)\in F_m}L_{i,e}(K_{f_1(e)},K_{f_2(e)})+\sum_{e\in E_i,f_1(e)\in F_m, f_2(e)\notin F_m}L_{i,e}(K_i)=\frac{\partial \Lambda_1(K)}{\partial K_i}.
\end{align}
Then the Hessian of $\Lambda_1$ is a Jacobi matrix, i.e.
\[\text{Hess}~\Lambda_1=M_1=\begin{pmatrix}
 	 		\frac{\partial L_1}{\partial K_1}&\cdots&\frac{\partial L_{1}}{\partial K_m}\\
 	 		\vdots&\ddots&\vdots\\
 	 		\frac{\partial L_{m}}{\partial K_{1}}&\cdots&\frac{\partial L_{m}}{\partial K_{m}}\\
\end{pmatrix}.\]

\begin{proposition}\label{pro3}
    The Jacobi matrix $M_1$ is positive definite.
\end{proposition}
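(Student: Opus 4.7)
The plan is to follow exactly the same template as the proof of Proposition \ref{pro2}: show that $M_1$ is strictly diagonally dominant with positive diagonal entries, which forces it to be positive definite. The only adaptation required is careful bookkeeping for the two different kinds of edges contributing to $\Lambda_1$ — those with both adjacent faces in $F_m$, which contribute via the 2D bigon potential $\Lambda_{\Phi(e)}(K_{f_1(e)},K_{f_2(e)})$, and those with exactly one adjacent face in $F_m$, which contribute via the 1D degenerated potential $\Lambda_{\Phi(e)}(K_{f_1(e)})$ studied on $\mathbb{R}\times\{-\infty\}$ in Section \ref{a3}.

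First, I would handle the diagonal entries: using \eqref{for6}, $\frac{\partial L_i}{\partial K_i}=\sum_{e\in E_i}\frac{\partial L_{i,e}}{\partial K_i}$, and each summand is strictly positive by Lemma \ref{lemma3} (invoking the 2D statement when both adjacent faces lie in $F_m$ and the 1D statement $\partial L_1/\partial K_1>0$ on $\mathbb{R}\times\{-\infty\}$ otherwise). Next, for $i\neq j$ with $i,j\in F_m$, only edges shared by faces $i$ and $j$ contribute to $\partial L_i/\partial K_j$; such edges automatically have both adjacent faces in $F_m$, so by Lemma 9 in \cite{nie} applied in the 2D case we get $\frac{\partial L_i}{\partial K_j}\leq 0$.

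The key calculation is then diagonal dominance. Because of the sign information just established,
\[
\left|\frac{\partial L_i}{\partial K_i}\right|-\sum_{j\neq i,\,j\in F_m}\left|\frac{\partial L_j}{\partial K_i}\right|=\frac{\partial L_i}{\partial K_i}+\sum_{j\neq i,\,j\in F_m}\frac{\partial L_j}{\partial K_i}=\frac{\partial\bigl(\sum_{f\in F_m}L_f\bigr)}{\partial K_i}.
\]
I would then rewrite $\sum_{f\in F_m}L_f$ by grouping face-edge incidences edge-by-edge. Edges with both adjacent faces in $F_m$ contribute $L_{f_1,e}+L_{f_2,e}=2\Phi(e)-\area(B_e)$ by Gauss--Bonnet; edges with exactly one adjacent face $f_1\in F_m$ contribute $L_{f_1,e}$, but since the other face has $k_{f_2}=0$ so $L_{f_2,e}=0$, Gauss--Bonnet again gives $L_{f_1,e}=2\Phi(e)-\area(B_e)$; edges with no adjacent face in $F_m$ contribute zero. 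Therefore
\[
\sum_{f\in F_m}L_f=\sum_{e\in E_{F_m}}\bigl(2\Phi(e)-\area(B_e)\bigr),
\]
so differentiating and using that only bigons with $i$ as an adjacent face depend on $K_i$ yields
\[
\frac{\partial\bigl(\sum_{f\in F_m}L_f\bigr)}{\partial K_i}=-\sum_{e\in E_i}\frac{\partial\area(B_e)}{\partial K_i}>0,
\]
where positivity is the area-monotonicity of Lemma 9 in \cite{nie} (in its 2D form and its boundary version already invoked inside the proof of Lemma \ref{lemma3}).

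The only subtle point, which I would flag as the main issue to be careful about rather than a genuine obstacle, is that we must legitimately apply the 1D area-monotonicity $\partial\area(B_e)/\partial K_i<0$ when the opposite face has $k=0$; this is exactly the input used in the proof of Lemma \ref{lemma3} and therefore available. Combining the three facts — positive diagonal, non-positive off-diagonal, and strict row dominance — yields that $M_1$ is strictly diagonally dominant with positive diagonal entries, hence positive definite, completing the proof.
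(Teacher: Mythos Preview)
Your proposal is correct and follows essentially the same route as the paper: positive diagonal via Lemma~\ref{lemma3} (and Lemma~9 in \cite{nie}), non-positive off-diagonal, and strict diagonal dominance by rewriting $\sum_{f\in F_m}L_f$ edge-by-edge and differentiating. The only cosmetic difference is that the paper, in the dominance step, treats the boundary edges (those with exactly one adjacent face in $F_m$) by directly invoking $\partial L_{i,e}(K_i)/\partial K_i>0$ from Lemma~\ref{lemma3}, whereas you first pass through Gauss--Bonnet to write $L_{f_1,e}=2\Phi(e)-\area(B_e)$ and then use the area-monotonicity input underlying that lemma; these are equivalent and your flagged ``subtle point'' is exactly right.
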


\begin{proof}
For any $1\le i\le m$, by (\ref{for6}), we have
$$
\frac{\partial L_i}{\partial K_i}=\sum_{e\in E_i,f_1(e), f_2(e)\in F_m}\frac{\partial L_{i,e}(K_{f_1(e)},K_{f_2(e)})}{\partial K_i}+
\sum_{e\in E_i,f_1(e)\in F_m, f_2(e)\notin F_m}\frac{\partial L_{i,e}(K_i)}{\partial K_i}.
$$
By Lemma 9 in \cite{nie} and Lemma \ref{lemma3}, we have $\frac{\partial L_{i,e}(K_{f_1(e)},K_{f_2(e)})}{\partial K_i}>0, \frac{\partial L_{i,e}(K_i)}{\partial K_i}>0$. Besides, at least one of these two sums exists, then we obtain that $\frac{\partial L_i}{\partial K_i}>0$ for $1\le i\le m$. 

For $1\le i\ne j\le m$, by (\ref{for6}), we have 
$$
\frac{\partial L_i}{\partial K_j}=\sum_{e\in E_i,f_1(e), f_2(e)\in F_m}\frac{\partial L_{i,e}(K_{f_1(e)},K_{f_2(e)})}{\partial K_j}.
$$
For any 1-cell $e\in E_i$ such that $f_1(e),f_2(e)\in F_m$, we know that $i\in F_{\{e\}}$. If $j\in F_{\{e\}}$, by Lemma 9 in \cite{nie}, then we have 
$$\frac{\partial L_{i,e}(K_{f_1(e)},K_{f_2(e)})}{\partial K_j}=\frac{\partial L_{i,e}(K_i,K_j)}{\partial K_j}<0.$$      
If $j\notin F_{\{e\}}$, then we have
$$
\frac{\partial L_{i,e}(K_{f_1(e)},K_{f_2(e)})}{\partial K_j}=0.
$$
Then we obtain $\frac{\partial L_i}{\partial K_j}\le 0$ for $1\le i\ne j\le m$.

Hence for $1\le i\le m$, we have
\begin{align*}
\left|\frac{\partial L_i}{\partial K_i}\right|-\sum_{1\le j \ne i\le m}\left|\frac{\partial L_j}{\partial K_i}\right|&=\frac{\partial (\sum_{f\in F_m}L_f)}{\partial K_i}=\frac{\partial (\sum_{f\in F_m}\sum_{e\in E_f}L_{f,e})}{\partial K_i}=\frac{\partial (\sum_{e\in E_{F_{m}}}\sum_{f\in F_m\cap F_{\{e\}}}L_{f,e})}{\partial K_i}\\
&=\sum_{e\in E_{F_{m}}}\frac{\partial (\sum_{f\in F_m\cap F_{\{e\}}}L_{f,e})}{\partial K_i}=\sum_{e\in E_i}\frac{\partial (\sum_{f\in F_m\cap F_{\{e\}}}L_{f,e})}{\partial K_i}.
\end{align*}

For 1-cell $e\in E_i$, we can suppose that $F_{\{e\}}=\{i,h\}$. Then $F_m\cap F_{\{e\}}=\{i\}$ or $\{i,h\}$. 

If $F_m\cap F_{\{e\}}=\{i,h\}$, by Gauss-Bonnet formula, we have
$$
\sum_{f\in F_m\cap F_{\{e\}}}L_{f,e}=L_{i,e}+L_{h,e}=2\Phi(e)-\text{Area}(B_e).
$$
By Lemma 9 in \cite{nie}, we have 
$$
\frac{\partial (\sum_{f\in F_m\cap F_{\{e\}}}L_{f,e})}{\partial K_i}=-\frac{\partial \text{Area}(B_e)}{\partial K_i}>0.
$$
If $F_m\cap F_{\{e\}}=\{i\}$, then by Lemma \ref{lemma3}, we have 
$$
\frac{\partial (\sum_{f\in F_m\cap F_{\{e\}}}L_{f,e})}{\partial K_i}=\frac{\partial L_{i,e}(K_i)}{\partial K_i}>0.
$$
Then we obtain
$$
\left|\frac{\partial L_i}{\partial K_i}\right|-\sum_{1\le j \ne i\le m}\left|\frac{\partial L_j}{\partial K_i}\right|=\sum_{e\in E_i}\frac{\partial (\sum_{f\in F_m\cap F_{\{e\}}}L_{f,e})}{\partial K_i}>0.
$$
Hence $M_1$ is a strictly diagonally dominant matrix with positive diagonal entries, i.e. $M_1$ is positive definite. 
\end{proof}

\begin{corollary}\label{coro2}
    The potential function $\Lambda_1$ is strictly convex on $\mathbb{R}^{m}$. 
\end{corollary}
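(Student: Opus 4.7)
The plan is to deduce strict convexity of $\Lambda_1$ directly from the positive definiteness of its Hessian, which was just established in Proposition \ref{pro3}. Since $\Lambda_1$ is defined on the open convex domain $\mathbb{R}^m$ as a sum of smooth potential functions $\Lambda_{\Phi(e)}$, it is itself smooth (in fact $C^2$) on $\mathbb{R}^m$, and by the definition of the $\Lambda_{\Phi(e)}$ in Section \ref{a3} together with equation (\ref{for6}), we have $\mathrm{Hess}\,\Lambda_1 = M_1$.

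The key step is then to invoke the standard criterion from multivariable calculus: a $C^2$ function on an open convex set whose Hessian is positive definite at every point is strictly convex. Applying this criterion with the positive definiteness of $M_1$ supplied by Proposition \ref{pro3} yields the conclusion immediately.

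There is essentially no obstacle here; the only subtlety worth mentioning is that Proposition \ref{pro3} delivers pointwise positive definiteness of $M_1$ throughout $\mathbb{R}^m$ (because the strict diagonal dominance argument given there is valid at every $K \in \mathbb{R}^m$), which is exactly what the convexity criterion requires. Thus the proof reduces to a one-line appeal to Proposition \ref{pro3} and the classical Hessian test for strict convexity.
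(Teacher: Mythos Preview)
Your proposal is correct and follows exactly the paper's own approach: the paper's proof of Corollary~\ref{coro2} is the one-line observation that by Proposition~\ref{pro3} the Hessian $\mathrm{Hess}\,\Lambda_1$ is positive definite, whence $\Lambda_1$ is strictly convex on $\mathbb{R}^m$. Your additional remarks on smoothness and the pointwise nature of the positive definiteness are accurate but not needed beyond what the paper itself assumes implicitly.
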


\begin{proof}
    By Proposition \ref{pro3}, $\text{Hess}~\Lambda_1$ is positive definite, then $\Lambda_1$ is strictly convex on $\mathbb{R}^{m}$. 
\end{proof}

\begin{theorem}\label{thm2}
    $\nabla\Lambda_1$ is a homeomorphism from $\mathbb{R}^{m}$ to $\tilde{\mathcal{L}}_{1\cdots m}$, where 
    $$
    \tilde{\mathcal{L}}_{1\cdots m}=\{(L_1,\cdots,L_{m})^T\in \mathbb{R}^{m}_{>0}~|~\sum_{f\in F_{m}'}L_f<2\sum_{e\in E_{F_{m}'}}\Phi(e), \forall F_{m}'\subset F_m\}.
    $$
\end{theorem}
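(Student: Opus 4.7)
The plan is to follow the template of the proof of Theorem \ref{thm1}, restricted to the active face set $F_m$, with three structural pieces: (i) show the image of $\nabla\Lambda_1$ lies in $\tilde{\mathcal{L}}_{1\cdots m}$, (ii) use strict convexity to conclude $\nabla\Lambda_1$ is an injective local homeomorphism, and (iii) analyze the boundary behaviour of $\nabla\Lambda_1$ at infinity to pin down the image, combined with Brouwer's invariance of domain.

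First I would use identity (\ref{for6}) to identify $\nabla\Lambda_1(K) = (L_1,\cdots,L_m)^T$. To show the image lands in $\tilde{\mathcal{L}}_{1\cdots m}$, I fix an arbitrary $F_m'\subset F_m$ and rewrite
\[
\sum_{f\in F_m'}L_f \;=\; \sum_{e\in E_{F_m'}}\;\sum_{f\in F_m'\cap F_{\{e\}}}L_{f,e}.
\]
Since each face $f\in F\setminus F_m$ has $k_f=0$ and thus $L_{f,e}=0$, while each $f\in F_m\setminus F_m'$ still contributes a nonnegative $L_{f,e}$, the above is bounded above by $\sum_{e\in E_{F_m'}}\sum_{f\in F_{\{e\}}}L_{f,e}$. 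The Gauss--Bonnet identity $\mathrm{Area}(B_e)=2\Phi(e)-\sum_{f\in F_{\{e\}}}L_{f,e}>0$ then yields $\sum_{f\in F_m'}L_f<2\sum_{e\in E_{F_m'}}\Phi(e)$, so $\nabla\Lambda_1(K)\in\tilde{\mathcal{L}}_{1\cdots m}$. Corollary \ref{coro2} gives strict convexity of $\Lambda_1$, so $\nabla\Lambda_1$ is an injective $C^1$ map with everywhere positive definite Jacobian, hence a local diffeomorphism and a topological embedding.

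The heart of the argument will be the boundary analysis, which I intend to mirror the one in Theorem \ref{thm1}. Fix a point $a\in\partial\mathbb{R}^m$ with components in $[-\infty,+\infty]$, and set $W_1=\{i\in F_m\mid a_i=+\infty\}$ and $W_2=\{i\in F_m\mid a_i=-\infty\}$, at least one being nonempty. Take any sequence $K^n\to a$ and write $L^n=\nabla\Lambda_1(K^n)$. If $W_2\neq\emptyset$, then for $i\in W_2$, $k_i^n\to 0$ and $\ell_{i,e}^n\le 2\pi$ force $L_i^n\to 0$, producing a boundary point of $\tilde{\mathcal{L}}_{1\cdots m}$. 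If $W_1\neq\emptyset$, I will show $\sum_{f\in W_1}L_f^n\to 2\sum_{e\in E_{W_1}}\Phi(e)$ by decomposing
\[
\sum_{f\in W_1}L_f^n \;=\; \sum_{e\in E_{W_1}}\;\sum_{f\in W_1\cap F_{\{e\}}}L_{f,e}^n
\]
and classifying each $e\in E_{W_1}$ by which side of $B_e$ is in $W_1$: if both sides of $B_e$ lie in $W_1$, both radii shrink and $\mathrm{Area}(B_e^n)\to 0$, so Gauss--Bonnet gives $L^n$-contribution $\to 2\Phi(e)$; if only one side is in $W_1$ and the other lies in $F_m\setminus W_1$, the non-$W_1$ face has bounded $k_j^n$ while the $W_1$-face has $r_h^n\to 0$, so $\ell_{j,e}^n$ and $\mathrm{Area}(B_e^n)$ both tend to $0$, yielding $L_{h,e}^n\to 2\Phi(e)$; finally if the other side lies in $F\setminus F_m$, its $L_{f,e}^n\equiv 0$ while still $\mathrm{Area}(B_e^n)\to 0$, and again $L_{h,e}^n\to 2\Phi(e)$. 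Summing, $L^n$ approaches $\partial\tilde{\mathcal{L}}_{1\cdots m}$. By Brouwer's invariance of domain the image of $\nabla\Lambda_1$ is open, and the boundary analysis shows it is also closed in $\tilde{\mathcal{L}}_{1\cdots m}$; connectedness of $\tilde{\mathcal{L}}_{1\cdots m}$ then forces surjectivity, completing the proof.

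The main obstacle will be the careful case split for $e\in E_{W_1}$ in the $W_1\neq\emptyset$ case: unlike in Theorem \ref{thm1}, bigons can now have a side on a face $f\in F\setminus F_m$ that has been ``frozen'' at $k_f=0$ from the start, in addition to the dynamic $W_1$/$W_2$ degenerations. Ensuring that each edge in $E_{W_1}$ contributes exactly $2\Phi(e)$ in the limit, with no double counting between the potential-function sum (which already drops the $F\setminus F_m$ term by construction of $\Lambda_1$) and the Gauss--Bonnet accounting, is the place where the analogy with Theorem \ref{thm1} requires the most care.
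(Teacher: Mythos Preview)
Your proposal is correct and follows essentially the same route as the paper: image containment via Gauss--Bonnet, embedding via the strict convexity of Corollary~\ref{coro2}, and boundary analysis in the style of Theorem~\ref{thm1} together with invariance of domain. In fact the paper's own proof simply states ``using the method in the proof of Theorem~\ref{thm1}'' for the boundary step, whereas you have spelled out the one genuinely new case (an edge $e\in E_{W_1}$ whose other side lies in the frozen set $F\setminus F_m$), so your write-up is more complete than the paper's.
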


\begin{proof}
By (\ref{for6}), we have 
$$
\begin{aligned}
\nabla\Lambda_1: \mathbb{R}^{m} & \longrightarrow \mathbb{R}^{m}_{>0}  \\
K=(K_1,\cdots,K_m)^T & \mapsto L=(L_1,\cdots,L_m)^T.
\end{aligned}
$$

For any subset $F_{m}'\subset F_{m}$, we obtain
$$
\sum_{f\in F_{m}'}L_f=\sum_{f\in F_{m}'}\sum_{e\in E_f}L_{f,e}=\sum_{e\in E_{F_{m}'}}\sum_{f\in F_{\{e\}}\cap F_{m}'}L_{f,e}.
$$
Then by Gauss-Bonnet formula, we have
$$
\text{Area}(B_e)=2\Phi(e)-\sum_{f\in F_{\{e\}}}L_{f,e}>0,
$$
i.e. $\sum_{f\in F_{\{e\}}}L_{f,e}<2\Phi(e)$. Hence we obtain
$$
\sum_{f\in F_{m}'}L_f=\sum_{e\in E_{F_{m}'}}\sum_{f\in F_{\{e\}}\cap F_{m}'}L_{f,e}\le\sum_{e\in E_{F_{m}'}}\sum_{f\in F_{\{e\}}}L_{f,e}<2\sum_{e\in E_{F_{m}'}}\Phi(e),
$$
hence $(L_1,\cdots,L_{m})^T\in\tilde{\mathcal{L}}_{1\cdots m}$, $\nabla\Lambda_1$ is a map from $\mathbb{R}^{m}$ to $\tilde{\mathcal{L}}_{1\cdots m}$.

By Proposition \ref{pro3} and Corollary \ref{coro2}, $\nabla\Lambda_1$ is a embedding map. Using the method in the proof of Theorem \ref{thm1}, we have a similar result that the image set of $\nabla\Lambda_1$ is $\tilde{\mathcal{L}}_{1\cdots m}$. Hence $\nabla\Lambda_1$ is a homeomorphism from $\mathbb{R}^{m}$ to $\tilde{\mathcal{L}}_{1\cdots m}$.    
\end{proof}
We can construct a map $\varsigma_1$ from $\mathbb{R}^{m}_{>0}$ to $\mathbb{R}^{m}$, i.e.
$$
\begin{array}{cccc}
\varsigma_1: \mathbb{R}^{m}_{>0} &\longrightarrow &\mathbb{R}^{m}\\
k=(k_1,\cdots,k_{m})^T&\longmapsto & K=(\ln k_1, \cdots, \ln k_{m})^T,\\
\end{array}
$$
the map $\varsigma_1$ is a homeomorphism. By Theorem \ref{thm2}, we know that the map $\tilde{\mathcal{E}}:=\nabla\Lambda_1\circ\varsigma_1$, i.e.
$$
\begin{array}{cccc}
\tilde{\mathcal{E}}: \mathbb{R}^{m}_{>0} &\longrightarrow &\tilde{\mathcal{L}}_{1\cdots m}\\
k&\longmapsto & \tilde{\mathcal{E}}(k)=\nabla\Lambda_1(K)=L(K)\\
\end{array}
$$
is a homeomorphism from $\mathbb{R}^{m}_{>0}$ to $\tilde{\mathcal{L}}_{1\cdots m}$. In other words, there exists a homeomorphism from $\mathbb{R}^{|F|}_{1\cdots m}$ to $\mathcal{L}_{1\cdots m}$, where
$$
\mathcal{L}_{1\cdots m}=\left\{\begin{array}{l|l}
(L_1,\cdots,L_{m},0,\cdots,0)^T\in \mathbb{R}^{m}_{>0}\times \{0\}^{|F|-m} & \begin{array}{l}
\sum_{f\in F_{m}'}L_f<2\sum_{e\in E_{F_{m}'}}\Phi(e), \forall F_{m}'\subset F_m
\end{array}
\end{array}\right\}.
$$

Similarly, for $1\le m\le |F|-1$, $1\le i_1<\cdots<i_m\le |F|$, by $F_{i_1\cdots i_m}$ we denote the set $F_{i_1\cdots i_m}=\{i_1,\cdots,i_m\}$. Then there exists a homeomorphism from $\mathbb{R}^{|F|}_{i_1\cdots i_m}$ to $\mathcal{L}_{i_1\cdots i_m}$, where
$$
\mathcal{L}_{i_1\cdots i_m}=\left\{\begin{array}{l|l}
(0,\cdots,L_{i_1},\cdots,0,\cdots,L_{i_m},0,\cdots,0)^T\in\mathbb{R}^{|F|} & \begin{array}{l}
\sum_{f\in F_{i_1\cdots i_m}'}L_f<2\sum_{e\in E_{F_{i_1\cdots i_m}'}}\Phi(e),\\
\forall F_{i_1\cdots i_m}'\subset F_{i_1\cdots i_m};L_{i_j}>0, 1\le j\le m
\end{array}
\end{array}\right\}.
$$    
Besides, we define the set $\tilde{\mathcal{L}}_{i_1\cdots i_m}$, i.e.
$$
\tilde{\mathcal{L}}_{i_1\cdots i_m}=\left\{\begin{array}{l|l}
(L_{i_1},\cdots,L_{i_m})^T\in\mathbb{R}_{>0}^{m} & \begin{array}{l}
\sum_{f\in F_{i_1\cdots i_m}'}L_f<2\sum_{e\in E_{F_{i_1\cdots i_m}'}}\Phi(e),~
\forall F_{i_1\cdots i_m}'\subset F_{i_1\cdots i_m}
\end{array}
\end{array}\right\}.
$$    
By $\bar{\partial}\mathcal{L}$ we denote the set 
$$\bar{\partial}\mathcal{L}=\bigcup_{1\le m\le |F|-1,1\le i_1<\cdots<i_m\le |F|}\mathcal{L}_{i_1\cdots i_m}\cup\{0\}.$$
Then by above argument, we completed the proof of Theorem \ref{thm11}.

\section{Existence and rigidity of spherical conical metrics for prescribed total geodesic curvatures}\label{a6}

By $\mathcal{E}_{i_1\cdots i_m}$ we denote the homeomorphism from $\mathbb{R}^{|F|}_{i_1\cdots i_m}$ to $\mathcal{L}_{i_1\cdots i_m}$ and by $\mathcal{L}$ we denote the set $\mathcal{L}=\mathcal{L}_1\cup\bar{\partial}\mathcal{L}$. Then we can define a map $\mathcal{E}$ from $\mathbb{R}^{|F|}_{\ge 0}$ to $\mathcal{L}$, i.e.
\begin{align}\mathcal{E}(k):=\left\{
\begin{array}{lc}
\mathcal{E}_1(k),& k\in\mathbb{R}^{|F|}_{>0}, \\\mathcal{E}_{i_1\cdots i_m}(k),&k\in\mathbb{R}^{|F|}_{i_1\cdots i_m},1\le m\le |F|-1, 1\le i_1<\cdots<i_m\le |F|,\\ 0,&k=0.
\end{array}
\label{for7}
\right.
\end{align}

\begin{theorem}\label{thm7}
    $\mathcal{E}$ is a homeomorphism from $\mathbb{R}^{|F|}_{\ge 0}$ to $\mathcal{L}$ such that the interior of $\mathbb{R}^{|F|}_{\ge 0}$ maps to the interior of $\mathcal{L}$ and $\bar{\partial}\mathbb{R}^{|F|}_{\ge 0}$ maps to $\bar{\partial}\mathcal{L}$. 
\end{theorem}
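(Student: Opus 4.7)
The plan is to verify the three properties---bijectivity, continuity of $\mathcal{E}$, and continuity of $\mathcal{E}^{-1}$---separately, relying on the stratum-by-stratum homeomorphisms $\mathcal{E}_1$ and $\mathcal{E}_{i_1\cdots i_m}$ furnished by Theorems \ref{thm1} and \ref{thm2}, together with the boundary analysis already carried out in the proof of Theorem \ref{thm1}. Bijectivity of $\mathcal{E}$ is immediate from the piecewise definition (\ref{for7}): the decomposition $\mathbb{R}^{|F|}_{\ge 0}=\mathbb{R}^{|F|}_{>0}\sqcup\bar{\partial}\mathbb{R}^{|F|}_{\ge 0}$ matches $\mathcal{L}=\mathcal{L}_1\sqcup\bar{\partial}\mathcal{L}$ stratum by stratum, and $\mathcal{E}$ restricts to a bijection on each stratum. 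The required stratification statement---that $\mathbb{R}^{|F|}_{>0}$ maps to $\mathcal{L}_1$ and $\bar{\partial}\mathbb{R}^{|F|}_{\ge 0}$ maps to $\bar{\partial}\mathcal{L}$---is built into this definition.

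For continuity of $\mathcal{E}$, I would argue sequentially. Suppose $k^n\to k$ in $\mathbb{R}^{|F|}_{\ge 0}$. After passing to a subsequence we may assume all $k^n$ lie in a single stratum $\mathbb{R}^{|F|}_{j_1\cdots j_l}$, and then the support of $k$ is contained in $\{j_1,\ldots,j_l\}$. For indices $i\notin\{j_1,\ldots,j_l\}$ both $L_i^n$ and $L_i$ vanish trivially. For indices $i$ with $k_i^n\to 0$, the uniform bound $\ell_{i,e}^n\le 2\pi$ gives $L_{i,e}^n=\ell_{i,e}^n k_i^n\to 0$, so $L_i^n\to 0$. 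For indices in the support of $k$, the two curvatures on every incident bigon converge, and the bigon side-length depends continuously on them all the way up to the degenerate value $k=0$, which corresponds to a disk of radius $\pi/2$ as encoded in the $\Lambda_\phi(K_1)$ piece of $\Lambda_1$ appearing in Section \ref{a5} and consistent with the subspaces $\mathcal{B}^1_\phi,\mathcal{B}^2_\phi$ identified in Section \ref{a3}.

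For continuity of $\mathcal{E}^{-1}$, let $L^n\to L$ in $\mathcal{L}$ and put $k^n=\mathcal{E}^{-1}(L^n)$, $k=\mathcal{E}^{-1}(L)$. The crux is to show $\{k^n\}$ is bounded in $\mathbb{R}^{|F|}_{\ge 0}$. Suppose for contradiction that $W:=\{j:k_j^n\to+\infty\}$ is nonempty along a subsequence. Repeating the Gauss--Bonnet argument from the proof of Theorem \ref{thm1}, applied within whichever stratum contains the $k^n$, yields $\sum_{f\in W}L_f^n\to 2\sum_{e\in E_W}\Phi(e)$; moreover, for any $j\in W$ lying outside the support of $L$, the same argument forces $L_j^n\not\to 0$. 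In either case this contradicts $L^n\to L\in\mathcal{L}$, since $L$ satisfies the strict inequalities defining its stratum and vanishes off its support. Hence $\{k^n\}$ is bounded, and by Bolzano--Weierstrass every subsequence has a further subsequence converging to some $k^*\in\mathbb{R}^{|F|}_{\ge 0}$; continuity of $\mathcal{E}$ and injectivity then give $\mathcal{E}(k^*)=L=\mathcal{E}(k)$ and $k^*=k$, so $k^n\to k$.

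The main obstacle I anticipate is the boundedness step: the sequence $\{k^n\}$ may move between strata, so applying the boundary analysis of Theorem \ref{thm1}, which was phrased inside a single stratum, requires a careful stratum-sensitive extension covering in particular the case in which some $k_j^n\to+\infty$ for an index $j$ outside the support stratum of $L$. A secondary technical point is the continuity of bigon side-lengths uniformly through $r=\pi/2$, already implicit in Section \ref{a3} but worth spelling out explicitly in the cross-stratum part of the continuity argument for $\mathcal{E}$.
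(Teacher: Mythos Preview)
Your proposal is correct and follows essentially the same route as the paper: bijectivity from the stratum-by-stratum homeomorphisms, sequential continuity of $\mathcal{E}$ using the bound $\ell_{i,e}\le\pi$ together with continuity of the $L_i$ up to the boundary, and continuity of $\mathcal{E}^{-1}$ via boundedness plus a subsequence/injectivity argument. The only notable difference is that the paper simply asserts the existence of a small neighborhood $U$ of $L$ with $\mathcal{E}^{-1}(U)$ bounded, whereas you supply the Gauss--Bonnet contradiction argument for this step; your version is more detailed here, not different in substance.
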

\begin{proof}
  Since $\mathcal{E}_1$, $\mathcal{E}_{i_1\cdots i_m} (1\le m\le |F|-1, 1\le i_1<\cdots<i_m\le |F|)$ are homeomorphisms, by (\ref{for7}), then $\mathcal{E}$ is a bijective map from $\mathbb{R}^{|F|}_{\ge 0}$ to $\mathcal{L}$. We only need to show that $\mathcal{E}$ and $\mathcal{E}^{-1}$ are continuous from interior to boundary.    

We choose a sequence $\{k^n\}_{n=1}^{+\infty}\subset\mathbb{R}^{|F|}_{>0}$ such that $\lim_{n\to +\infty}k^n=0$, where $k^n=(k_1^n,\cdots,k_{|F|}^n)^T$. Then we have $\mathcal{E}_1(k^n)=(L_1(k^n),\cdots,L_{|F|}(k^n))^T$, where $L_i(k^n)=\sum_{e\in E_i}\ell_{i,e}^n k_i^n$, $1\le i\le |F|$. Since $0\le \ell_{i,e}^n\le \pi, k_i^n\to 0~(n\to +\infty)$, then we have $\ell_{i,e}^n k_i^n\to 0~(n\to +\infty)$, $L_i(k^n)=\sum_{e\in E_i}\ell_{i,e}^n k_i^n\to 0~(n\to +\infty)$, $1\le i\le |F|$. Hence we obtain $\mathcal{E}_1(k^n)\to 0~(n\to +\infty)$.

For any point $k\in\mathbb{R}_{i_1\cdots i_m}^{|F|}$, $1\le m\le |F|-1$, $1\le i_1<\cdots<i_m\le |F|$, we choose another sequence $\{k^n\}_{n=1}^{+\infty}\subset\mathbb{R}^{|F|}_{>0}$ such that $\lim_{n\to +\infty}k^n=k$.
Then we need to show that $\mathcal{E}_1(k^n)\to \mathcal{E}_{i_1\cdots i_m}(k)~(n\to +\infty)$. For simplicity, we only show that $\mathcal{E}_1(k^n)\to \mathcal{E}_{1\cdots m}(k)=(L_1(k),\cdots,L_m(k),0,\cdots,0)~(n\to +\infty)$, the others use the same method.

We can suppose that $k=(k_1,\cdots,k_m,0,\cdots,0)^T$, since $k_i^n\to 0, ~m+1\le i\le |F|$, by the analysis above, then we have $L_i(k^n)=\sum_{e\in E_i}\ell_{i,e}^n k_i^n\to 0~(n\to +\infty), ~m+1\le i\le |F|$. Since the total geodesic curvature $L_i$ is continuous on $\mathbb{R}^{|F|}_{\ge 0}$, then we have $L_i(k^n)\to L_i(k)~(n\to +\infty)$, $1\le i\le m$, i.e. $\mathcal{E}_1(k^n)\to \mathcal{E}_{1\cdots m}(k)~(n\to +\infty)$. Hence the map $\mathcal{E}$ is continuous.

For any $L\in \bar{\partial} \mathcal{L}$, there exists a small enough neighborhood $U$ of $L$ such that $\mathcal{E}^{-1}(U)$ is bounded in $\mathbb{R}^{|F|}_{\ge 0}$. We choose a sequence $\{L^n\}_{n=1}^{+\infty}\subset U$ such that $\lim_{n\to +\infty}L^n=L$. By $\{k^n\}_{n=1}^{+\infty}\subset \mathbb{R}_{\ge 0}^{|F|}$ and $k$ we denote the image of $\{L^n\}_{n=1}^{+\infty}$ and $L$ under the map $\mathcal{E}^{-1}$, respectively. Then we only need to show that $k^n\to k~(n\to +\infty)$. 

If $k^n\nrightarrow k~(n\to +\infty)$, since $\{k^n\}_{n=1}^{+\infty}\subset\mathcal{E}^{-1}(U)$ and $\mathcal{E}^{-1}(U)$ is bounded, then there exists a subsequence of $\{k^n\}_{n=1}^{+\infty}$, which is still denoted by $\{k^n\}_{n=1}^{+\infty}$, such that $k^n\to k'~(n\to +\infty)$, where $k'\ne k$. Since $\mathcal{E}$ is continuous, then $\mathcal{E}(k^n)=L^n\to \mathcal{E}(k')~(n\to +\infty)$. Besides, we know that $L^n\to L=\mathcal{E}(k)~(n\to +\infty)$, then we obtain $\mathcal{E}(k)=\mathcal{E}(k')$. This is a contradiction since the map $\mathcal{E}$ is injective. Hence the map $\mathcal{E}^{-1}$ is continuous, $\mathcal{E}$ is a homeomorphism from $\mathbb{R}^{|F|}_{\ge 0}$ to $\mathcal{L}$.    
\end{proof}

By above argument, we completed the proof of Theorem \ref{thm12}.

\section{Convergence of prescribed combinatorial Ricci flows for circle pattern metrics }\label{a7}

Given $\hat{L}=(\hat{L}_1,\cdots,\hat{L}_{|F|})^T\in\mathcal{L}_1$, then we can define the prescribed combinatorial Ricci flows as follows, i.e.
\begin{equation}\label{f1}
\frac{dk_i}{dt}=-(L_i-\hat{L}_i)k_i,~~\forall i\in F.
\end{equation}

Using change of variables $K_i=\ln k_i$, we can rewrite flows (\ref{f1}) as the following equivalent prescribed combinatorial Ricci flows
\begin{equation}\label{f2}
\frac{dK_i}{dt}=-(L_i-\hat{L}_i),~~\forall i\in F.
\end{equation}

\begin{theorem}\label{thm3}
    Given a closed topological surface $S$ with a cellular decomposition $\Sigma=(V,E,F)$, the weight $\Phi\in (0,\frac{\pi}{2})^{|E|}$ and the prescribed total geodesic curvature $\hat{L}=(\hat{L}_1,\cdots,\hat{L}_{|F|})^T\in\mathcal{L}_1$ on the face set $F$. For any initial geodesic curvature $k(0)\in\mathbb{R}_{>0}^{|F|}$, the solution of the prescribed combinatorial Ricci flows (\ref{f1}) exists for all time $t\in[0,+\infty)$ and is unique.   

\end{theorem}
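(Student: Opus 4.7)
The plan is to pass to the logarithmic variables $K_i=\ln k_i$ so that the flow becomes an ODE on the whole space $\mathbb{R}^{|F|}$ with a smooth, bounded right-hand side, after which long-time existence and uniqueness reduce to a standard ODE argument.

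First, since the map $\varsigma : \mathbb{R}^{|F|}_{>0} \to \mathbb{R}^{|F|}$, $k \mapsto K$ is a diffeomorphism, a solution to (\ref{f1}) with $k(0)\in\mathbb{R}^{|F|}_{>0}$ is equivalent to a solution to (\ref{f2}) with $K(0)=\varsigma(k(0))\in\mathbb{R}^{|F|}$; one direction blows up to $0$ or $+\infty$ precisely when the other escapes to $\pm\infty$. I would therefore work entirely with (\ref{f2}). The vector field $F_i(K):=-(L_i(K)-\hat{L}_i)$ is smooth on $\mathbb{R}^{|F|}$: the side lengths $\ell_{i,e}$ of each bigon are real-analytic functions of $(r_{f_1(e)},r_{f_2(e)})$ by the spherical cosine law used in Lemma \ref{lemma1}, and the change of variables $r_i=\mathrm{arccot}(e^{K_i})$ is smooth on all of $\mathbb{R}^{|F|}$. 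Hence $L_i(K)=\sum_{e\in E_i}\ell_{i,e}(K)\,e^{K_i}$ is smooth in $K$, and Picard--Lindel\"of gives a unique $C^1$ solution on a maximal interval $[0,T^*)$.

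To upgrade to $T^*=+\infty$, the key observation is that $L_i$ is globally bounded on $\mathbb{R}^{|F|}$. Indeed, applying the Gauss--Bonnet inequality used in the proof of Theorem \ref{thm1} with the singleton $F'=\{i\}$ gives
\begin{equation*}
0 < L_i(K) < 2\sum_{e\in E_i}\Phi(e) =: M_i
\end{equation*}
for every $K\in\mathbb{R}^{|F|}$. Consequently
\begin{equation*}
\left|\frac{dK_i}{dt}\right| = |L_i(K(t))-\hat{L}_i| \leq M_i + \hat{L}_i =: C_i,
\end{equation*}
so $|K_i(t)-K_i(0)|\le C_i t$ on $[0,T^*)$. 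Thus $K(t)$ stays inside a compact box of $\mathbb{R}^{|F|}$ on any finite time interval, which contradicts the maximality characterization of $T^*$ (solutions escape every compact set as $t\to T^{*-}$) unless $T^*=+\infty$. Going back through $\varsigma^{-1}$, this shows $k(t)$ remains in $\mathbb{R}^{|F|}_{>0}$ for all $t\ge 0$ and yields the desired unique global solution of (\ref{f1}).

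There is no real obstacle here: the only point that requires a moment's care is checking that the vector field extends smoothly across the whole $K$-space (so that neither $k_i\to 0$ nor $k_i\to\infty$ creates a singularity in the ODE itself), and this is handled by the logarithmic change of variables together with the uniform a priori bound $L_i<M_i$ inherited from the definition of $\mathcal{L}_1$.
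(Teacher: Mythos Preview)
Your proposal is correct and follows essentially the same approach as the paper: pass to the logarithmic variables $K_i=\ln k_i$, observe that the right-hand side is smooth, use Gauss--Bonnet to obtain the uniform bound $0<L_i<2\sum_{e\in E_i}\Phi(e)$, and conclude global existence from the boundedness of $|L_i-\hat L_i|$ via a standard ODE extension argument. The only cosmetic difference is that the paper invokes Peano's theorem for local existence and then the existence--uniqueness theorem separately, whereas you go directly through Picard--Lindel\"of; your treatment is if anything slightly more explicit about the compact-set escape criterion.
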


\begin{proof}
We can only consider the equivalent flows (\ref{f2}). Since all $-(L_i-\hat{L}_i)$ are smooth functions on $\mathbb{R}_{>0}^{|F|}$, by Peano’s existence theorem in ODE theory, we know that the solution of flows (\ref{f2}) exists on $[0,\epsilon)$, where $\epsilon>0$. 

By Gauss-Bonnet formula, we have that 
$$\text{Area}(B_e)=2\Phi(e)-L_{i,e}-L_{j,e}>0,~~j\in F_{\{e\}},$$             
hence we obtain $0<L_{i,e}<2\Phi(e)$. Since $L_i=\sum_{e\in E_i}L_{i,e}$, we have that $0<L_i<2\sum_{e\in E_i}\Phi(e)$. Besides, since $\hat{L}\in\mathcal{L}_1$, by definition, we have $0<\hat{L}_i<\sum_{f\in F}\hat{L}_f<2\sum_{e\in E_F}\Phi(e)$. Then we obtain that $$|L_i-\hat{L}_i|\le |L_i|+|\hat{L}_i|\le 2\sum_{e\in E_i}\Phi(e)+2\sum_{e\in E_F}\Phi(e)<+\infty.$$   
Hence $|L_i-\hat{L}_i|$ is uniformly bounded by a constant, which depends only on the weight $\Phi$ and cellular decomposition $\Sigma$. By the extension theorem of solution in ODE theory, the solution of flows (\ref{f2}) exists for all time $t\in [0,+\infty)$. By existence and uniqueness theorem of solution in ODE theory, the solution of flows (\ref{f2}) is unique.       
\end{proof}

We need the following result in the theory of negative gradient flows.

\begin{lemma}(\cite{takasu}, Proposition 2.13)\label{lemma5}
Let $h: \mathbb{R}^n \rightarrow \mathbb{R}$ be a smooth convex function and let $\xi:[0, +\infty) \rightarrow \mathbb{R}^n$ be a negative gradient flow of $h$. It holds for any $\tau>0$ and $\xi^* \in \mathbb{R}^n$ that
$$
|\nabla h(\xi(\tau))|^2 \leq\left|\nabla h\left(\xi^*\right)\right|^2+\frac{1}{\tau^2}\left|\xi^*-\xi(0)\right|^2.
$$
\end{lemma}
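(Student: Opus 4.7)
The plan is to establish the inequality by constructing a Lyapunov-type quantity along the gradient flow, combining two uses of convexity of $h$: one at $\xi(t)$ to drive a monotone energy decay, and one at $\xi^*$ to absorb a potentially-negative cross-term. Fix $\xi^*\in\mathbb{R}^n$ and $\tau>0$, and write $\xi(t)$ for the flow, which satisfies $\dot\xi(t)=-\nabla h(\xi(t))$.

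First I would establish that $t\mapsto |\nabla h(\xi(t))|^2$ is non-increasing. Differentiating along the flow gives
$$\frac{d}{dt}|\nabla h(\xi(t))|^2 = -2\,\nabla h(\xi(t))^{T}\,\mathrm{Hess}\,h(\xi(t))\,\nabla h(\xi(t)) \leq 0,$$
since $\mathrm{Hess}\,h\succeq 0$ by convexity. This monotonicity is what ultimately lets an integral of $|\nabla h|^2$ over $[0,\tau]$ be controlled from below by its endpoint value. Next I would introduce the auxiliary quantity
$$V(t) := |\xi(t)-\xi^*|^2 + 2t\bigl(h(\xi(t))-h(\xi^*)\bigr),$$
compute
$$\dot V(t) = -2\langle \xi(t)-\xi^*,\,\nabla h(\xi(t))\rangle + 2\bigl(h(\xi(t))-h(\xi^*)\bigr) - 2t|\nabla h(\xi(t))|^2,$$
and apply the convexity bound $h(\xi^*) \geq h(\xi(t)) + \langle \nabla h(\xi(t)),\,\xi^*-\xi(t)\rangle$ to force the first two terms to sum to something $\leq 0$, leaving $\dot V(t)\leq -2t|\nabla h(\xi(t))|^2$. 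Integrating from $0$ to $\tau$ and using the monotonicity above to bound $\int_0^\tau 2t|\nabla h(\xi(t))|^2\,dt \geq \tau^2|\nabla h(\xi(\tau))|^2$ then yields
$$|\xi(\tau)-\xi^*|^2 + 2\tau\bigl(h(\xi(\tau))-h(\xi^*)\bigr) + \tau^2|\nabla h(\xi(\tau))|^2 \leq |\xi(0)-\xi^*|^2. \qquad (\star)$$

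The last step is to control the middle term of $(\star)$, which could be negative when $\xi^*$ is not a minimizer. Here I would invoke convexity once more, now at $\xi^*$, obtaining $h(\xi(\tau))-h(\xi^*) \geq \langle \nabla h(\xi^*),\,\xi(\tau)-\xi^*\rangle$, and then combine Cauchy--Schwarz with the AM--GM inequality $2ab\leq a^2+b^2$ to deduce
$$|\xi(\tau)-\xi^*|^2 + 2\tau\bigl(h(\xi(\tau))-h(\xi^*)\bigr) \geq -\tau^2|\nabla h(\xi^*)|^2.$$
Plugging this lower bound into $(\star)$ and dividing by $\tau^2$ produces the desired estimate. The main obstacle, as I see it, is precisely this closing step: because $\xi^*$ is an arbitrary point rather than a minimizer, the cross-term $\langle \nabla h(\xi^*),\xi(\tau)-\xi^*\rangle$ cannot simply be dropped, and one must balance it against $|\xi(\tau)-\xi^*|^2$ via AM--GM so that the ambient $|\xi(\tau)-\xi^*|^2$ in $(\star)$ cancels cleanly and only $|\nabla h(\xi^*)|^2$ plus $|\xi(0)-\xi^*|^2/\tau^2$ remain.
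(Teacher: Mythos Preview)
Your proof is correct and complete. Each step checks out: the monotonicity of $|\nabla h(\xi(t))|^2$ follows from positive semidefiniteness of the Hessian; the Lyapunov functional $V(t)$ has the claimed derivative bound via the first-order convexity inequality at $\xi(t)$; the integration combined with monotonicity yields $(\star)$; and the closing AM--GM step correctly absorbs the cross-term so that $|\xi(\tau)-\xi^*|^2$ cancels.

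Note, however, that the paper does not supply its own proof of this lemma: it is quoted verbatim as Proposition~2.13 of \cite{takasu} and used as a black box. So there is no in-paper argument to compare against. Your write-up is a self-contained proof of a result the authors simply cite; if anything, it adds value by making the paper more self-contained, and the Lyapunov-plus-monotonicity route you take is the standard one in the convex-optimization literature for such gradient-flow decay estimates.
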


Then we can prove the Theorem \ref{s4}.

\begin{proof}[\textbf{Proof of Theorem \ref{s4}}]
We can only consider the equivalent flows (\ref{f2}). By Theorem \ref{thm3}, we suppose that the solution of flows (\ref{f2}) is $K(t)$, $t\in [0,+\infty)$. We construct a function $\tilde{\Lambda}=\Lambda-\sum_{f\in F}K_f\hat{L}_f$, by Corollary \ref{coro1}, the function $\tilde{\Lambda}$ is convex on $\mathbb{R}^{|F|}$. Besides, we have that $\nabla\tilde{\Lambda}=\nabla\Lambda-\hat{L}=L-\hat{L},$ then we obtain  
$$\frac{dK}{dt}=-\nabla\tilde{\Lambda}(K(t)).$$ 
Since $\hat{L}\in\mathcal{L}_1$, by Theorem \ref{thm1}, there exists the unique $\hat{K}\in\mathbb{R}^{|F|}$ such that $\nabla\Lambda(\hat{K})=L(\hat{K})=\hat{L}$. By Lemma \ref{lemma5}, for any $t>0$, we have 
$$
|\nabla\tilde{\Lambda}(K(t))|^2\le |\nabla\tilde{\Lambda}(\hat{K})|^2+\frac{1}{t^2}|\hat{K}-K(0)|^2,
$$
i.e. we obtain
$$
|L(K(t))-L(\hat{K})|^2\le \frac{1}{t^2}|\hat{K}-K(0)|^2\to 0~(t\to +\infty).
$$
Hence we know that $L(K(t))\to L(\hat{K})=\hat{L}~(t\to +\infty)$, i.e. $\nabla\Lambda(K(t))\to\nabla\Lambda(\hat{K})~(t\to +\infty)$. Then by Theorem \ref{thm1}, we have that $K(t)\to \hat{K}~(t\to +\infty).$ This completes the proof. 
\end{proof}

\section{Convergence of prescribed combinatorial Ricci flows for degenerated circle pattern metrics}\label{a8}

Given the prescribed total geodesic curvature $\hat{L}\in\mathcal{L}_{i_1\cdots i_m}$, where $1\le m\le |F|-1$, $1\le i_1<\cdots<i_m\le |F|$, we can define the prescribed combinatorial Ricci flows as follows, i.e. 
\begin{equation}\label{f3}
\frac{dk_i}{dt}=-(L_i-\hat{L}_i)k_i,~~\forall i\in F_{i_1\cdots i_m},~
\frac{dk_i}{dt}=-L_ik_i,~~\forall i\in F\setminus F_{i_1\cdots i_m},
\end{equation}
where $F_{i_1\cdots i_m}=\{i_1,\cdots,i_m\}$. Then we study the existence and convergence of solution of the flows (\ref{f3}).  

Using change of variables $K_i=\ln k_i$, we can rewrite flows (\ref{f3}) as the following equivalent prescribed combinatorial Ricci flows
\begin{equation}\label{f4}
\frac{dK_i}{dt}=-(L_i-\hat{L}_i),~~\forall i\in F_{i_1\cdots i_m},~
\frac{dK_i}{dt}=-L_i,~~\forall i\in F\setminus F_{i_1\cdots i_m}.
\end{equation}
By similar argument in the proof of Theorem \ref{thm3}, we have the following theorem.

\begin{theorem}\label{thm6}
Given a closed topological surface $S$ with a cellular decomposition $\Sigma=(V,E,F)$, the weight $\Phi\in (0,\frac{\pi}{2})^{|E|}$ and the prescribed total geodesic curvature $\hat{L}\in\mathcal{L}_{i_1\cdots i_m}$ on the face set $F$. For any initial geodesic curvature $k(0)\in\mathbb{R}_{>0}^{|F|}$, the solution of the prescribed combinatorial Ricci flows (\ref{f3}) exists for all time $t\in[0,+\infty)$ and is unique.   
\end{theorem}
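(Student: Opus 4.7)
The plan is to mirror the proof of Theorem \ref{thm3}, exploiting the fact that passing to the logarithmic variables $K_i=\ln k_i$ replaces a multiplicative right-hand side with a bounded one, and that all the bounds used there came from Gauss--Bonnet and depended only on the combinatorial data $(\Sigma,\Phi)$ plus the prescribed datum $\hat{L}$.

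First I would work with the equivalent system (\ref{f4}) on all of $\mathbb{R}^{|F|}$. Since the maps $K\mapsto L_i(K)$ and hence $K\mapsto L_i(K)-\hat{L}_i$ are smooth on $\mathbb{R}^{|F|}$ (as in the circle-pattern case, these are defined through the implicit bigon geometry of Lemma \ref{lemma1} and Lemma \ref{lemma3}), Peano's theorem gives a local solution on $[0,\varepsilon)$, and standard uniqueness for ODEs with smooth right-hand side gives uniqueness. The only real content is to promote this to a global-in-time solution, for which it suffices to show that the right-hand side of (\ref{f4}) is uniformly bounded along any maximal solution.

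Next I would establish the bound. For any $i\in F$ and any edge $e\in E_i$, the Gauss--Bonnet formula applied to the bigon $B_e$ gives
\begin{equation*}
L_{i,e}+L_{j,e}=2\Phi(e)-\area(B_e)<2\Phi(e),\quad j\in F_{\{e\}}\setminus\{i\},
\end{equation*}
so $0<L_{i,e}<2\Phi(e)$, whence
\begin{equation*}
0<L_i=\sum_{e\in E_i}L_{i,e}<2\sum_{e\in E_i}\Phi(e).
\end{equation*}
For $i\in F_{i_1\cdots i_m}$, the hypothesis $\hat{L}\in\mathcal{L}_{i_1\cdots i_m}$ forces $0<\hat{L}_i<2\sum_{e\in E_{F_{i_1\cdots i_m}}}\Phi(e)$, so $|L_i-\hat{L}_i|$ is bounded by a constant depending only on $\Phi$ and $\Sigma$. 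For $i\in F\setminus F_{i_1\cdots i_m}$, the quantity $|L_i|$ is bounded by the same type of constant. Consequently the vector field in (\ref{f4}) is uniformly bounded on $\mathbb{R}^{|F|}$, and the extension theorem for ODEs gives existence of the solution $K(t)$ on $[0,+\infty)$. Translating back via $k_i=e^{K_i}$ yields a solution $k(t)\in\mathbb{R}_{>0}^{|F|}$ of (\ref{f3}) for all $t\geq 0$, and uniqueness transports as well.

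The main, and essentially only, obstacle is the a priori bound on the right-hand side, which here splits into two sub-cases (indices inside versus outside $F_{i_1\cdots i_m}$); both sub-cases are handled by the Gauss--Bonnet inequality $0<L_i<2\sum_{e\in E_i}\Phi(e)$ already used in Theorem \ref{thm3}, so no new estimate is needed beyond remarking that $\hat{L}_i$ is finite on the subset where it appears. After that the ODE machinery (Peano, extension, uniqueness) closes the argument exactly as before.
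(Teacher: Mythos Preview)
Your proposal is correct and follows essentially the same approach as the paper, which simply states that Theorem \ref{thm6} follows ``by similar argument in the proof of Theorem \ref{thm3}.'' Your explicit splitting into the two sub-cases $i\in F_{i_1\cdots i_m}$ and $i\notin F_{i_1\cdots i_m}$ is exactly the minor adaptation needed, and the Gauss--Bonnet bound together with the standard ODE machinery closes the argument just as in Theorem \ref{thm3}.
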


For simplicity, we consider the set $F_{m}=\{1,\cdots,m\}$, where $1\le m\le |F|-1$. Given $\Phi\in(0,\frac{\pi}{2})^{|E|}$, $\hat{L}=(\hat{L}_1,\cdots,\hat{L}_m,0,\cdots,0)^T\in\mathcal{L}_{1\cdots m}$, then we study the following prescribed combinatorial Ricci flows, i.e.
\begin{equation}\label{f5}
\frac{dk_i}{dt}=-(L_i-\hat{L}_i)k_i,~~1\le i\le m,~
\frac{dk_i}{dt}=-L_ik_i,~~m+1\le i\le|F|.
\end{equation}
We can also consider the equivalent prescribed combinatorial Ricci flows, i.e.
\begin{equation}\label{f6}
\frac{dK_i}{dt}=-(L_i-\hat{L}_i),~~1\le i\le m,~
\frac{dK_i}{dt}=-L_i,~~m+1\le i\le|F|.
\end{equation}

\begin{remark}\label{r1} 
For any $\tilde{K}=(\tilde{K}_1,\cdots,\tilde{K}_m)^T\in\mathbb{R}^m$, we have that $\nabla\Lambda_1(\tilde{K})=(L_1(\tilde{K}),\cdots,L_m(\tilde{K}))^T$. By (\ref{for6}), we know that $L_i(\tilde{K})(1\le i\le m)$ is actually the total geodesic curvature at the face $i$ when the radii $\mathbf{r}=(\arccot e^{\tilde{K}_1},\cdots,\arccot  e^{\tilde{K}_m},\frac{\pi}{2},\cdots,\frac{\pi}{2})^T$ or when the geodesic curvatures $k=(e^{\tilde{K}_1},\cdots,e^{\tilde{K}_m},0,\cdots,0)^T$ or when the $K=(\tilde{K}_1,\cdots,\tilde{K}_m,-\infty,\cdots,-\infty)^T$.\\
If we use geodesic curvatures $\tilde{k}=(\tilde{k}_1,\cdots,\tilde{k}_m)^T\in \mathbb{R}^{m}_{>0}$ as the variable of $L_i$, then the $L_i(\tilde{k})(1\le i\le m)$ is actually the total geodesic curvature at the face $i$ when the geodesic curvatures $k=(\tilde{k}_1,\cdots,\tilde{k}_m,0,\cdots,0)^T$. 
\end{remark}

For $\tilde{k}=(\tilde{k}_1,\cdots,\tilde{k}_m)^T\in \mathbb{R}^{m}_{>0}$, we construct the prescribed combinatorial Ricci flows as follows, i.e.
\begin{equation}\label{f8}
\frac{d\tilde{k}_i}{dt}=-(L_i(\tilde{k})-\hat{L}_i)\tilde{k}_i,~~1\le i\le m.
\end{equation}

Using change of variables $\tilde{K}_i=\ln \tilde{k}_i$, we can rewrite flows (\ref{f8}) as the following equivalent prescribed combinatorial Ricci flows
\begin{equation}\label{f7}
\frac{d\tilde{K}_i}{dt}=-(L_i(\tilde{K})-\hat{L}_i),~~1\le i\le m.
\end{equation}

By similar argument in the proof of Theorem \ref{thm3}, we have the following theorem.

\begin{theorem}\label{thm4}
Given a closed topological surface $S$ with a cellular decomposition $\Sigma=(V,E,F)$, the weight $\Phi\in (0,\frac{\pi}{2})^{|E|}$ and the prescribed total geodesic curvature $\hat{L}\in\mathcal{L}_{1\cdots m}$ on the face set $F$. For any initial geodesic curvature $\tilde{k}(0)\in\mathbb{R}_{>0}^m$ on the face set $F_m$ and 0 on the face set $F\setminus F_m$, the solution of the prescribed combinatorial Ricci flows (\ref{f8}) exists for all time $t\in[0,+\infty)$ and is unique. 
\end{theorem}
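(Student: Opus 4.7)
The plan is to mirror the argument used in the proof of Theorem \ref{thm3}, carried out in the reduced $m$-dimensional setting on the faces $F_m = \{1,\ldots,m\}$. First, I would pass to logarithmic coordinates $\tilde{K}_i = \ln \tilde{k}_i$ and work with the equivalent flow (\ref{f7}) on the open set $\mathbb{R}^m$. Via the correspondence in Remark \ref{r1}, the components $L_i(\tilde{K})$ $(1\le i\le m)$ are exactly the total geodesic curvatures of the configuration in which faces $1,\ldots,m$ carry the radii $\arccot e^{\tilde{K}_i}$ and the remaining faces are assigned radius $\pi/2$ (equivalently, geodesic curvature $0$). Because $L_i = \partial \Lambda_1/\partial \tilde{K}_i$ for the strictly convex smooth potential $\Lambda_1$ constructed in Section \ref{a5}, the right-hand side of (\ref{f7}) is smooth on $\mathbb{R}^m$, and Peano's theorem yields a solution on some interval $[0,\epsilon)$.

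Second, to upgrade short-time existence to global existence on $[0,+\infty)$, I would establish a uniform a priori bound on the velocity $|L_i(\tilde{K}) - \hat{L}_i|$ depending only on $\Phi$ and $\Sigma$. Exactly as in the proof of Theorem \ref{thm3}, the Gauss-Bonnet identity $\operatorname{Area}(B_e) = 2\Phi(e) - \sum_{f \in F_{\{e\}}} L_{f,e} > 0$ forces each bigon side to satisfy $0 < L_{i,e} < 2\Phi(e)$; summing over $e \in E_i$ gives $0 < L_i(\tilde{K}) < 2\sum_{e\in E_i}\Phi(e)$. Here $L_{i,e}$ makes sense whether the opposing face has positive curvature or is degenerate, using the potentials $\Lambda_\phi$ on $\mathbb{R}^2$ and on $\mathbb{R}\times\{-\infty\}$ introduced in Section \ref{a3}. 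Similarly, since $\hat{L} \in \mathcal{L}_{1\cdots m}$ one has $0 < \hat{L}_i < 2\sum_{e \in E_{F_m}}\Phi(e)$. Therefore
\begin{equation*}
|L_i(\tilde{K}) - \hat{L}_i| \le 2\sum_{e \in E_i}\Phi(e) + 2\sum_{e \in E_{F_m}}\Phi(e) < +\infty,
\end{equation*}
uniformly in $\tilde{K}$. The extension theorem in ODE theory then rules out finite-time blow-up and gives a solution on $[0,+\infty)$. Uniqueness follows from the standard existence and uniqueness theorem for smooth ODEs.

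The main (minor) obstacle is not the boundedness or smoothness of the vector field, which are essentially identical to the non-degenerate case of Theorem \ref{thm3}, but rather the bookkeeping needed to verify that the bigon-by-bigon Gauss-Bonnet estimate is still valid when some faces are assigned radius $\pi/2$. This is handled by the degenerate potentials $\Lambda_\phi(K_1)$ and $\Lambda_\phi(K_2)$ on $\mathbb{R}\times\{-\infty\}$ and $\{-\infty\}\times\mathbb{R}$ constructed in Section \ref{a3}, together with the identification $\mathbb{R}\times\{-\infty\} \cong \Delta_1$ from Lemma \ref{lemma4}, which ensures $L_{i,e} \in (0, 2\Phi(e))$ on each bigon whose opposing face is degenerate. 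Once this is checked, the proof is identical in structure to that of Theorem \ref{thm3}.
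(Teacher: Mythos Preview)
Your proposal is correct and follows exactly the approach the paper indicates: the paper simply states that Theorem \ref{thm4} follows ``by similar argument in the proof of Theorem \ref{thm3}'' without spelling out any details, and your write-up is precisely that argument transported to the $m$-dimensional setting, including the Gauss--Bonnet bound $0<L_{i,e}<2\Phi(e)$ (valid also when the opposing face is degenerate, via Section~\ref{a3} and Lemma~\ref{lemma4}) and the resulting uniform velocity bound that feeds into the ODE extension theorem.
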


Then we study the convergence of solution to the flows (\ref{f8}).

\begin{theorem}\label{thm5}
Given a closed topological surface $S$ with a cellular decomposition $\Sigma=(V,E,F)$, the weight $\Phi\in (0,\frac{\pi}{2})^{|E|}$ and the prescribed total geodesic curvature $\hat{L}=(\hat{L}_1,\cdots,\hat{L}_{m},0\cdots,0)^T\in\mathcal{L}_{1\cdots m}$ on the face set $F$. For any initial geodesic curvature $\tilde{k}(0)\in\mathbb{R}_{>0}^m$ on the face set $F_m$ and 0 on the face set $F\setminus F_m$, the solution of the prescribed combinatorial Ricci flows (\ref{f8}) converges to the unique degenerated circle pattern metric with the total geodesic curvature $\tilde{L}=(\hat{L}_1,\cdots,\hat{L}_m)^T$ on $F_m$ and 0 on $F\setminus F_m$ up to isometry. Moreover, if the solution converges to the degenerated circle pattern metric $\hat{k}$, then $\hat{k}\in\mathbb{R}^{|F|}_{1\cdots m}$.    
\end{theorem}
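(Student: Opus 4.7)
The plan is to mimic the proof of Theorem \ref{s4} in the reduced $m$-dimensional setting, using the auxiliary potential $\Lambda_1$ built in Section \ref{a5}. First I would pass to the logarithmic variables $\tilde{K}_i = \ln \tilde{k}_i$ ($1 \le i \le m$) so that flow (\ref{f8}) becomes the equivalent system (\ref{f7}) on $\mathbb{R}^m$; by Theorem \ref{thm4} the solution $\tilde{K}(t)$ exists for all $t \ge 0$ and is unique. I would then introduce the functional
\[
\tilde{\Lambda}_1(\tilde{K}) := \Lambda_1(\tilde{K}) - \sum_{i=1}^{m} \tilde{K}_i \hat{L}_i
\]
on $\mathbb{R}^m$. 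By the identity (\ref{for6}) one has $\nabla \Lambda_1(\tilde{K}) = (L_1(\tilde{K}), \ldots, L_m(\tilde{K}))^T$, so
\[
\nabla \tilde{\Lambda}_1(\tilde{K}) = \bigl( L_i(\tilde{K}) - \hat{L}_i \bigr)_{1 \le i \le m},
\]
which means that flow (\ref{f7}) is precisely the negative gradient flow of $\tilde{\Lambda}_1$. By Corollary \ref{coro2}, $\Lambda_1$ is strictly convex on $\mathbb{R}^m$, and hence so is $\tilde{\Lambda}_1$.

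Next I would invoke Theorem \ref{thm2}: since $\tilde{L} = (\hat{L}_1, \ldots, \hat{L}_m)^T$ lies in $\tilde{\mathcal{L}}_{1 \cdots m}$ by the assumption $\hat{L} \in \mathcal{L}_{1 \cdots m}$, there exists a unique $\hat{\tilde{K}} \in \mathbb{R}^m$ with $\nabla \Lambda_1(\hat{\tilde{K}}) = \tilde{L}$, and this is the unique critical point, hence the global minimizer, of $\tilde{\Lambda}_1$. Applying Lemma \ref{lemma5} to $\tilde{\Lambda}_1$ with $\xi = \tilde{K}$ and $\xi^{*} = \hat{\tilde{K}}$ gives
\[
| L(\tilde{K}(t)) - \tilde{L} |^{2} = | \nabla \tilde{\Lambda}_1(\tilde{K}(t)) |^{2} \le \frac{1}{t^{2}} | \hat{\tilde{K}} - \tilde{K}(0) |^{2} \xrightarrow{t \to +\infty} 0.
\]
Because $\nabla \Lambda_1$ is a homeomorphism from $\mathbb{R}^m$ onto $\tilde{\mathcal{L}}_{1 \cdots m}$, this forces $\tilde{K}(t) \to \hat{\tilde{K}}$, and consequently $\tilde{k}(t) \to \hat{\tilde{k}} := ( e^{\hat{\tilde{K}}_1}, \ldots, e^{\hat{\tilde{K}}_m} )^{T}$ in $\mathbb{R}^m_{>0}$.

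Padding with zeros on $F \setminus F_m$, the full solution of (\ref{f8}) converges to $\hat{k} = ( \hat{\tilde{k}}_1, \ldots, \hat{\tilde{k}}_m, 0, \ldots, 0 )^{T} \in \mathbb{R}^{|F|}_{1 \cdots m}$. By Remark \ref{r1}, this $\hat{k}$ carries total geodesic curvature $\tilde{L}$ on $F_m$ and $0$ on $F \setminus F_m$, and its uniqueness up to isometry is already supplied by Theorem \ref{thm11}. The general case $F_{i_1 \cdots i_m}$ follows by relabeling faces.

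There is no serious obstacle here once the reduction is set up correctly; the one point that needs care is the assertion that (\ref{f7}) is truly the negative gradient flow of $\tilde{\Lambda}_1$, which rests on the identity (\ref{for6}) telling us that the partial derivatives of $\Lambda_1$ recover exactly the total geodesic curvatures at the faces of $F_m$ when the complementary faces have curvature $0$. After that, the argument is the verbatim convex-analytic argument used for Theorem \ref{s4}, transported from $\mathbb{R}^{|F|}$ to $\mathbb{R}^m$.
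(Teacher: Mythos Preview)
Your proposal is correct and follows essentially the same approach as the paper: both introduce the shifted convex potential $\tilde{\Lambda}_1(\tilde{K})=\Lambda_1(\tilde{K})-\sum_{i=1}^m \tilde{K}_i\hat{L}_i$, identify flow (\ref{f7}) as its negative gradient flow via (\ref{for6}), apply Lemma \ref{lemma5} at the unique critical point supplied by Theorem \ref{thm2}, and then use that $\nabla\Lambda_1$ is a homeomorphism to upgrade convergence of curvatures to convergence of $\tilde{K}(t)$. Your additional remarks about padding with zeros and invoking Remark \ref{r1} and Theorem \ref{thm11} merely make explicit what the paper leaves implicit.
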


\begin{proof}
We can only consider the equivalent flows (\ref{f7}). By Theorem \ref{thm4}, we suppose that the solution of flows (\ref{f7}) is $\tilde{K}(t)$, $t\in [0,+\infty)$. We construct a function $\tilde{\Lambda}_1(\tilde{K})=\Lambda_1(\tilde{K})-\sum_{f\in F_m}\tilde{K}_f\hat{L}_f$. By Corollary \ref{coro2}, the function $\tilde{\Lambda}_1$ is convex on $\mathbb{R}^{m}$. Besides, we have that 
$$\pp{\tilde{\Lambda}_1(\tilde{K})}{\tilde{K}_i}=\pp{\Lambda_1(\tilde{K})}{\tilde{K}_i}-\hat{L}_i=L_i(\tilde{K})-\hat{L}_i,~~1\le i\le m,$$
then we obtain  
$$\frac{d\tilde{K}}{dt}=-\nabla\tilde{\Lambda}_1(\tilde{K}(t)).$$

Since $\tilde{L}\in \tilde{\mathcal{L}}_{1\cdots m}$, by Theorem \ref{thm2}, there exists the unique $\tilde{K}=(\hat{K}_1,\cdots,\hat{K}_m)^T\in\mathbb{R}^m$ such that $\nabla\Lambda_1(\tilde{K})=(L_1(\tilde{K}),\cdots,L_m(\tilde{K}))^T=(\hat{L}_1,\cdots,\hat{L}_m)^T=\tilde{L}$. By Lemma \ref{lemma5}, for any $t>0$, we have 
$$
|\nabla\tilde{\Lambda}_1(\tilde{K}(t))|^2\le |\nabla\tilde{\Lambda}_1(\tilde{K})|^2+\frac{1}{t^2}|\tilde{K}-\tilde{K}(0)|^2,
$$
i.e. we obtain
$$
\sum_{i=1}^{m}(L_i(\tilde{K}(t))-\hat{L}_i)^2\le \frac{1}{t^2}|\tilde{K}-\tilde{K}(0)|^2\to 0~(t\to +\infty).
$$  
Hence for $1\le i\le m$, we know that $L_i(\tilde{K}(t))\to \hat{L}_i=L_i(\tilde{K})~(t\to +\infty)$ and $L(\tilde{K}(t))\to \tilde{L}=L(\tilde{K})~(t\to +\infty)$. Hence we have that $\nabla\Lambda_1(\tilde{K}(t))\to \nabla\Lambda_1(\tilde{K})~(t\to +\infty)$. Then by Theorem \ref{thm2}, we have that $\tilde{K}(t)\to \tilde{K}~(t\to +\infty)$. This completes the proof.     
\end{proof}

\begin{remark}\label{r2}
By Remark \ref{r1} and Theorem \ref{thm5}, we know that $(\tilde{K}_1(t),\cdots,\tilde{K}_m(t),-\infty,\cdots,-\infty)^T\\\to(\hat{K}_1,\cdots,\hat{K}_m,-\infty,\cdots,-\infty)^T~(t\to+\infty)$ and $L_i(\tilde{K}_1(t),\cdots,\tilde{K}_m(t),-\infty,\cdots,-\infty)\to \hat{L}_i~(t\to +\infty)$, $1\le i\le m$. Besides, we have that $\hat{L}_i=L_i(\hat{K}_1,\cdots,\hat{K}_m,-\infty,\cdots,-\infty)$.  
\end{remark}

For any $k\in\mathbb{R}_{i_1\cdots i_m}^{|F|}$, then $\tilde{k}=(k_{i_1},\cdots,k_{i_m})\in\mathbb{R}_{>0}^m$. Given $\hat{L}\in\mathcal{L}_{i_1\cdots i_m}$, we construct the prescribed combinatorial Ricci flows as follows, i.e.
\begin{equation}\label{f9}
\frac{d\tilde{k}_i}{dt}=-(L_i(\tilde{k})-\hat{L}_i)\tilde{k}_i,~~\forall i\in F_{i_1\cdots i_m}.
\end{equation}  
Using change of variables $\tilde{K}_i=\ln \tilde{k}_i$, we can rewrite flows (\ref{f9}) as the following equivalent prescribed combinatorial Ricci flows
\begin{equation}\label{f10}
\frac{d\tilde{K}_i}{dt}=-(L_i(\tilde{K})-\hat{L}_i),~~\forall i\in F_{i_1\cdots i_m}.
\end{equation}

We can use the same techniques above to obtain the long time existence of the solution to prescribed combinatorial Ricci flows (\ref{f9}) and Theorem \ref{s6}.

Then we study the convergence of solution to the flows (\ref{f3}).

\begin{theorem}
Given a closed topological surface $S$ with a cellular decomposition $\Sigma=(V,E,F)$, the weight $\Phi\in (0,\frac{\pi}{2})^{|E|}$ and the prescribed total geodesic curvature $\hat{L}=(\hat{L}_1,\cdots,\hat{L}_{m},0\cdots,0)^T\in\mathcal{L}_{1\cdots m}$ on the face set $F$. For any initial geodesic curvature $k(0)\in\mathbb{R}_{>0}^{|F|}$, the solution of the prescribed combinatorial Ricci flows (\ref{f5}) converges to the unique degenerated circle pattern metric with the total geodesic curvature $\hat{L}$ up to isometry. Moreover, if the solution converges to the degenerated circle pattern metric $\hat{k}$, then $\hat{k}\in\mathbb{R}^{|F|}_{1\cdots m}$. 
\end{theorem}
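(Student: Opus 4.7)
The plan is to reformulate the flow in log coordinates as the negative gradient flow of a strictly convex function on $\mathbb{R}^{|F|}$ and then extract convergence via Lemma \ref{lemma5}. Long-time existence and uniqueness for (\ref{f5}) follow from essentially the same ODE argument used in Theorem \ref{thm6}: the only new feature is that $|F|-m$ of the equations read $dK_i/dt=-L_i$, and $L_i$ is uniformly bounded by $2\sum_{e\in E_i}\Phi(e)$ by Gauss--Bonnet applied to the incident bigons. The substantive work is therefore to prove convergence.

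Passing to $K_i=\ln k_i$ turns (\ref{f5}) into (\ref{f6}). I define
\[
\tilde{\Lambda}(K) := \Lambda(K) - \sum_{i=1}^m K_i\,\hat{L}_i,
\]
which by Corollary \ref{coro1} is strictly convex on $\mathbb{R}^{|F|}$; by (\ref{formula1}),
\[
\nabla\tilde{\Lambda}(K)=\bigl(L_1(K)-\hat{L}_1,\ldots,L_m(K)-\hat{L}_m,L_{m+1}(K),\ldots,L_{|F|}(K)\bigr)^T,
\]
so that (\ref{f6}) is exactly $dK/dt=-\nabla\tilde{\Lambda}(K(t))$. The main obstacle is that $\tilde{\Lambda}$ has no critical point in $\mathbb{R}^{|F|}$: Theorem \ref{thm1} forces $L_i(K)>0$ everywhere, so the equation $\nabla\tilde{\Lambda}(K)=0$ (which would require $L_i(K)=0$ for $i>m$) has no solution. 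The infimum is attained only in the limit where the last $|F|-m$ coordinates tend to $-\infty$, i.e., on the stratum where the corresponding circles degenerate to great circles.

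To circumvent this, I approximate the missing minimizer. Let $(\hat{K}_1,\ldots,\hat{K}_m)\in\mathbb{R}^m$ be the unique point produced by Theorem \ref{thm5}, so that by Remark \ref{r2} one has $L_i(\hat{K}_1,\ldots,\hat{K}_m,-\infty,\ldots,-\infty)=\hat{L}_i$ for $1\le i\le m$, and set
\[
\xi^*_N:=(\hat{K}_1,\ldots,\hat{K}_m,-N,\ldots,-N)^T\in\mathbb{R}^{|F|}.
\]
The estimate used in the proof of Theorem \ref{thm1} (namely $L_i=\sum_{e\in E_i}\ell_{i,e}k_i\to 0$ whenever $k_i\to 0$) together with continuity of $L$ on $\mathbb{R}^{|F|}_{\ge 0}$, established in the proof of Theorem \ref{thm7}, gives $|\nabla\tilde{\Lambda}(\xi^*_N)|\to 0$ as $N\to\infty$. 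Applying Lemma \ref{lemma5} to $\tilde{\Lambda}$ with $\xi^*=\xi^*_N$ gives
\[
|\nabla\tilde{\Lambda}(K(\tau))|^2 \le |\nabla\tilde{\Lambda}(\xi^*_N)|^2 + \frac{1}{\tau^2}\,|\xi^*_N-K(0)|^2.
\]
For any $\varepsilon>0$, first pick $N$ with $|\nabla\tilde{\Lambda}(\xi^*_N)|^2<\varepsilon/2$, then pick $\tau_0$ with $|\xi^*_N-K(0)|^2/\tau_0^2<\varepsilon/2$; for every $\tau\ge\tau_0$ the right-hand side is below $\varepsilon$. Hence $L_i(K(t))\to\hat{L}_i$ for $1\le i\le m$ and $L_i(K(t))\to 0$ for $m+1\le i\le|F|$, i.e., $L(k(t))\to\hat{L}$ in $\mathcal{L}$.

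Finally, Theorem \ref{thm7} provides a homeomorphism $\mathcal{E}:\mathbb{R}^{|F|}_{\ge 0}\to\mathcal{L}$ which sends $\bar\partial\mathbb{R}^{|F|}_{\ge 0}$ onto $\bar\partial\mathcal{L}$. Since $\hat{L}\in\mathcal{L}_{1\cdots m}$, there is a unique $\hat{k}\in\mathbb{R}^{|F|}_{1\cdots m}$ with $\mathcal{E}(\hat{k})=\hat{L}$. Applying the continuous inverse $\mathcal{E}^{-1}$ to the convergence $L(k(t))\to\hat{L}$ yields $k(t)\to\hat{k}$ in $\mathbb{R}^{|F|}_{\ge 0}$, with $\hat{k}\in\mathbb{R}^{|F|}_{1\cdots m}$, completing the proof.
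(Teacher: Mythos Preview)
Your proof is correct and follows essentially the same strategy as the paper: rewrite (\ref{f5}) as the negative gradient flow of the strictly convex potential $\Lambda-\sum_{i\le m}K_i\hat L_i$, feed approximate minimizers into Lemma~\ref{lemma5} to force $L(K(t))\to\hat L$, and then pull back through the homeomorphism $\mathcal{E}^{-1}$ of Theorem~\ref{thm7}. The only difference is in the choice of comparison point: the paper uses the time-varying $\bar K(t)=(\tilde K_1(t),\ldots,\tilde K_m(t),-\ln t,\ldots,-\ln t)$ built from the trajectory of the lower-dimensional flow (\ref{f8}), whereas you use the static sequence $\xi^*_N=(\hat K_1,\ldots,\hat K_m,-N,\ldots,-N)$ depending only on the limit point from Theorem~\ref{thm2}; your two-parameter $\varepsilon$ argument is a mild simplification, since it avoids invoking the convergence of $\tilde K(t)$ along the way.
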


\begin{proof}
We consider the equivalent flows (\ref{f6}). By Theorem \ref{thm6}, we suppose that the solution of flows (\ref{f6}) is $K(t)$, $t\in [0,+\infty)$. We construct a function $\bar{\Lambda}=\Lambda-\sum_{f\in F_m}K_f\hat{L}_f$. By Corollary \ref{coro1}, the function $\bar{\Lambda}$ is convex on $\mathbb{R}^{|F|}$. Besides, we have that $\nabla\bar{\Lambda}=\nabla\Lambda-\hat{L}=L-\hat{L},$ then we obtain  
$$\frac{dK}{dt}=-\nabla\bar{\Lambda}(K(t)).$$   

By Lemma \ref{lemma5}, for any $t>0$ and $\bar{K}(t)=(\tilde{K}_1(t),\cdots,\tilde{K}_m(t),-\ln t,\cdots,\ln t)^T\in\mathbb{R}^{|F|}$, we have 
$$
|\nabla\bar{\Lambda}(K(t))|^2\le |\nabla\bar{\Lambda}(\bar{K}(t))|^2+\frac{1}{t^2}|\bar{K}(t)-K(0)|^2,
$$
By Theorem \ref{thm5}, we obtain that $\tilde{K}_i(t)\to\hat{K}_i~(t\to +\infty)$, $1\le i\le m$. Hence we know that the function $\tilde{K}_i(t)~(1\le i\le m)$ is bounded, then we have 
$$
\frac{1}{t^2}|\bar{K}(t)-K(0)|^2\to 0~(t\to +\infty).
$$
Besides, we have
$$
|\nabla\bar{\Lambda}(\bar{K}(t))|^2=\sum_{i=1}^{m}(L_i(\bar{K}(t))-\hat{L}_i)^2+\sum_{i=m+1}^{|F|}L_i^2(\bar{K}(t)).
$$
Since $\bar{K}(t)=(\tilde{K}_1(t),\cdots,\tilde{K}_m(t),-\ln t,\cdots,\ln t)^T\to(\hat{K}_1,\cdots,\hat{K}_m,-\infty,\cdots,-\infty)^T~(t\to +\infty)$, by Remark \ref{r2}, we have that $L_i(\bar{K}(t))\to L_i(\hat{K}_1,\cdots,\hat{K}_m,-\infty,\cdots,-\infty)=\hat{L}_i~(t\to +\infty)$, $1\le i\le m$. Besides, the geodesic curvature $k_i=e^{-\ln t}=\frac{1}{t}\to 0~(t\to +\infty)$, $m+1\le i\le |F|$, then $L_i(\bar{K}(t))\to 0~(t\to +\infty)$, $m+1\le i\le |F|$. Hence we have
$$|\nabla\bar{\Lambda}(\bar{K}(t))|^2\to 0~(t\to +\infty).$$
Then we obtain
$$|\nabla\bar{\Lambda}(K(t))|^2\to 0~(t\to +\infty),$$
i.e. $L(K(t))\to \hat{L}=(\hat{L}_1,\cdots,\hat{L}_{m},0\cdots,0)^T~(t\to +\infty)$.

We define some functions $k_i(t)=e^{K_i(t)}$, $1\le i\le |F|$ and some constants $\hat{k}_i=e^{\hat{K}_i}$, $1\le i\le m$. Then it is easy to know that $k(t)=(k_1(t),\cdots,k_{|F|}(t))^T$ is the solution of the prescribed combinatorial Ricci flows (\ref{f5}) and $\hat{k}=(\hat{k}_1,\cdots,\hat{k}_m,0,\cdots,0)^T\in\mathbb{R}_{1\cdots m}^{|F|}$. 

For simplicity, we can define $L(k):=L(\ln k_1,\cdots,\ln k_{|F|})$. Then we have $\hat{L}_i=L_i(\hat{k}),~1\le i\le m$ and $\hat{L}=L(\hat{k})$. Hence we know that $L(k(t))\to L(\hat{k})=\hat{L}~(t\to +\infty)$. Since $\{k(t)~|~t\in [0,+\infty)\}\subset\mathbb{R}_{>0}^{|F|}$ and $\hat{k}\in\mathbb{R}_{1\cdots m}^{|F|}$, by (\ref{for7}), we have that $\mathcal{E}(k(t))\to\mathcal{E}(\hat{k})~(t\to +\infty)$. By Theorem \ref{thm7}, $\mathcal{E}^{-1}$ is continuous, then we have that $k(t)\to\hat{k}~(t\to +\infty)$. This completes the proof. 
\end{proof}

We can use the same techniques above to obtain the Theorem \ref{s5}.

\section{Acknowledgments}
Guangming Hu is supported by NSF of China (No. 12101275). Ziping Lei is supported by NSF of China (No. 12122119). Puchun Zhou is supported by Shanghai Science and Technology Program [Project No. 22JC1400100].

\Addresses

\begin{thebibliography}{99}


\bibitem{cao} H. Cao, X. Zhu, \emph{A Complete Proof of the Poincaré and Geometrization Conjectures - application of the Hamilton-Perelman theory of the Ricci flow,} Asian J. Math. 10(2), 2006, 165-492. 


\bibitem{Tian}
X. X Chen, P. Lu, G. Tian
\emph{A note on uniformization of Riemann surfaces by Ricci,}
Proc. Amer. Math. Soc. 134 (2006), 3391-3393

\bibitem{chow} B. Chow and F. Luo, \emph{Combinatorial Ricci flows on surfaces}, J. Differential Geom. 63, 2003, 97--129.


\bibitem{GHZ} H. Ge, B. Hua, and P. Zhou,  \emph{A combinatorial curvature flow in spherical background geometry.} J.  Funct. Anal., 286(7), 2024, 110335.






\bibitem{Hamilton}R. S. Hamilton, \emph{Three-manifolds with positive Ricci curvature,} J.  Differential Geom., 17(2), 1982, 255--306.


\bibitem{nie} X. Nie, \emph{On circle patterns and spherical conical metrics}, Proc. Amer. Math. Soc., 152 (2024), 843-853.
\bibitem{Perelman1} G. Perelman, \emph{The entropy formula for the Ricci flow and its geometric applications,} arXiv: 0211159.
\bibitem{Perelman2} G. Perelman, \emph{Ricci flow with surgery on three-manifolds,} arXiv: math.DG/0303109.

\bibitem{Perelman3} G.Perelman, \emph{Finite extinction time for the solutions to the Ricci flow on certain three manifolds,} arXiv: math.DG/0307245.

\bibitem{takasu} A. Takatsu, \emph{Convergence of combinatorial ricci flows to degenerate circle patterns,} Trans.  Amer.
Math. Soc., 372(11), 2019, 7597--7617.


 
\end{thebibliography}
\end{document}